\newcommand{\unds}{\underline{s}}
\newcommand{\Ss}{S(\unds)}
\newcommand{\LP}{\underline{P}}
\newcommand{\LE}{\underline{E}}
\newcommand{\LG}{\underline{G}}
\newcommand{\upr}{\overline{P}}
\newcommand{\lpr}{\underline{P}}
\newcommand{\LEC}{\LE_{c}}
\newcommand{\LEDC}{\LE_{2c}}
\newcommand{\LED}{\LE_{2}}
\newcommand{\LGC}{\LG_{c}}
\newcommand{\LGD}{\LG_{2}}
\newcommand{\LGDC}{\LG_{2c}}
\newcommand{\LD}{L_{2}}
\newcommand{\rset}{\mathbb{R}}
\newcommand{\natset}{\mathbb{N}}
\newcommand{\natsetp}{\mathbb{N}_{0}}
\newcommand{\lset}{\mathcal{L}}
\newcommand{\reals}{\rset}
\newcommand{\Rset}{\rset}
\newcommand{\prt}{I\dsn P}
\newcommand{\dset}{\mathcal{D}}
\newcommand{\aset}{\mathcal{A}}
\newcommand{\qset}{\mathcal{S}}
\newcommand{\xset}{\mathcal{X}}
\newcommand{\bset}{\mathcal{B}}
\newcommand{\bsetp}{\mathcal{B}^{\varnothing}}
\newcommand{\asetp}{\mathcal{A}^{\prime}}
\newcommand{\xsetg}{\mathcal{X}^{\succeq}}
\newcommand{\xsetl}{\mathcal{X}^{\preceq}}
\newcommand{\ra}{\mathcal{R}(A)}
\newcommand{\rb}{\mathcal{R}(B)}
\newcommand{\rbg}{\mathcal{R}(B)^{\succ}}
\newcommand{\rbl}{\mathcal{R}(B)^{\prec}}
\newcommand{\rbu}{\mathcal{R}(B_{1})}
\newcommand{\rbd}{\mathcal{R}(B_{2})}
\newcommand{\rbi}{\mathcal{R}(B_{i})}
\newcommand{\rbgi}{\mathcal{R}(B_{i})^{\succ}}
\newcommand{\rbud}{\mathcal{R}(B_{1}\vee B_{2})}
\newcommand{\slambda}{S(\underline{\lambda})}
\newcommand{\dsn}{\!\!}
\newcommand{\nega}[1]{\neg #1}
\newcommand{\dlin}{\dset_{LIN}}
\newtheorem{definition}{Definition}
\newtheorem{proposition}{Proposition}
\newtheorem{lemma}{Lemma}
\newtheorem{example}{Example}
\newtheorem{remark}{Remark}
\begin{document}




\title{$2$-coherent and $2$-convex Conditional Lower Previsions}


\author[1]{Renato Pelessoni\thanks{renato.pelessoni@econ.units.it}}
\author[1]{Paolo Vicig\thanks{paolo.vicig@econ.units.it}}
\affil[1]{DEAMS ``B. de Finetti''\\
	University of Trieste\\
	Piazzale Europa~1\\
	I-34127 Trieste\\
	Italy}

\renewcommand\Authands{ and }





\maketitle

\begin{abstract}
In this paper we explore relaxations of (Williams) coherent and convex conditional previsions that form the families of $n$-coherent and $n$-convex conditional previsions, at the varying of $n$.
We investigate which such previsions are the most general one may reasonably consider, suggesting (centered) $2$-convex or, if positive homogeneity and conjugacy is needed, $2$-coherent lower previsions. Basic properties of these previsions are studied. In particular,
we prove that they satisfy the 
Generalized Bayes Rule and always have a $2$-convex or, respectively, $2$-coherent natural extension.
The role of these extensions is analogous to that of the natural extension for  coherent lower previsions.
On the contrary, $n$-convex and $n$-coherent previsions with $n\geq 3$ either are convex or coherent themselves or have no extension of the same type on large enough sets. 
Among the uncertainty concepts that can be modelled by $2$-convexity, we discuss generalizations of capacities and niveloids to a conditional framework
and show that the well-known risk measure Value-at-Risk only guarantees to be centered $2$-convex.
In the final part, we determine the rationality requirements of $2$-convexity and $2$-coherence from a desirability perspective,
emphasising how they weaken those of (Williams) coherence.

\smallskip
\noindent \textbf{Keywords.}
Williams coherence, $2$-coherent previsions, $2$-convex previsions, Generalised Bayes Rule.
\end{abstract}


\section*{Acknowledgement}
*NOTICE: This is the authors' version of a work that was accepted for publication in the International Journal of Approximate Reasoning. Changes resulting from the publishing process, such as peer review, editing, corrections, structural formatting, and other quality control mechanisms may not be reflected in this document. Changes may have been made to this work since it was submitted for publication. A definitive version was subsequently published in the International Journal of Approximate Reasoning, vol. 77, October~2016, pages 66--86, doi:10.1016/j.ijar.2016.06.003 $\copyright$ Copyright Elsevier

http://www.sciencedirect.com/science/article/pii/S0888613X16300792.

\vspace{0.3cm}
$\copyright$ 2016. This manuscript version is made available under the CC-BY-NC-ND 4.0 license http://creativecommons.org/licenses/by-nc-nd/4.0/

\begin{center}
	\includegraphics[width=2cm]{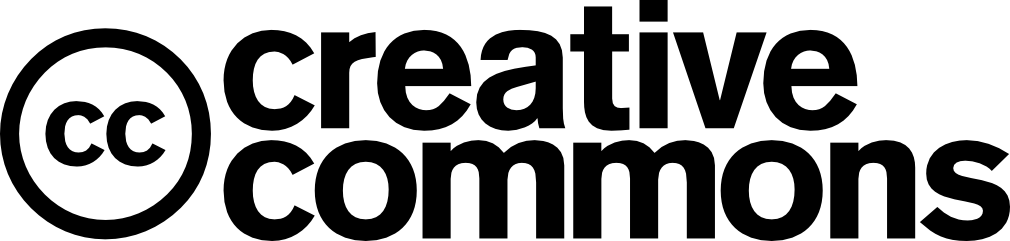}
	\includegraphics[width=2cm]{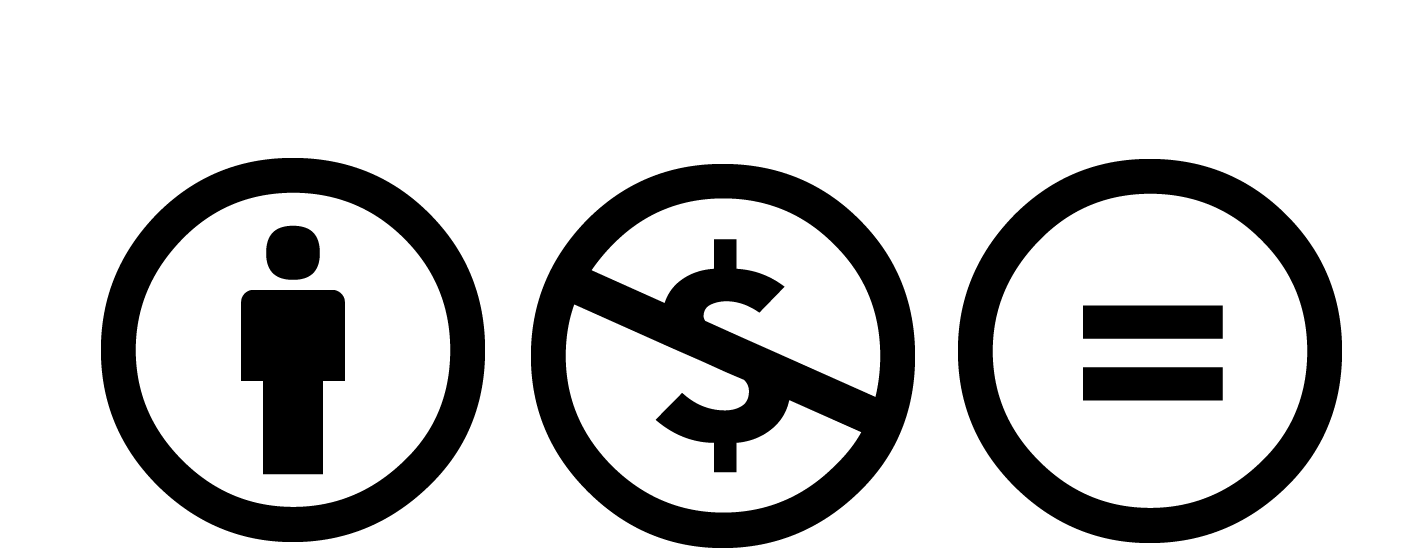}
\end{center}

\section{Introduction}
\label{sec:introduction} In his influential book 
\emph{Statistical Reasoning with Imprecise Probabilities} \cite{wal91}, P.
Walley developed a behavioural approach to \emph{imprecise} probabilities
(and previsions) extending de Finetti's \cite{def74} interpretation
of coherent \emph{precise} previsions. Operationally, this was
achieved through a relaxation of de Finetti's betting scheme.

In fact, following de Finetti, $P$ is a coherent precise prevision
on a set $\qset$ of gambles if and only if for all $m$,
$n\in\natsetp$, $s_1,\ldots,s_m,r_1,\ldots,r_n\geq 0$,
$X_1,\ldots,X_m$, $Y_1,\ldots,Y_n\in\qset$, defining
$G=\sum_{i=1}^{m}s_i(X_i-P(X_i))-\sum_{j=1}^{n}r_j(Y_j-P(Y_j))$, it
holds that $\sup G\geq 0$. The terms $s_i(X_i-P(X_i))$,
$-r_j(Y_j-P(Y_j))$ are proportional (with coefficients or \emph{stakes}
$s_i$, $r_j$) to the \emph{gains} arising from, respectively,
buying $X_i$ at $P(X_i)$ or selling $Y_j$ at $P(Y_j)$.
A coherent lower prevision $\lpr$ on $\qset$ may be defined
in a similar way, just restricting $n$ to belong to $\{0,1\}$. This means
that the betting scheme is modified to allow selling at most one
gamble.
Several other betting scheme variants have been investigated in the
literature, either extending coherence for lower previsions
(conditional lower previsions) or weakening it (previsions that are
convex, or avoid sure loss). In particular, a convex lower prevision
is defined introducing a convexity constraint $n = 1, \sum_{i=1}^{m}
s_i = r_1=1$ in the betting scheme. In \cite[Appendix B]{wal91}
$n$-coherent previsions are studied, as a different relaxation of
coherence.

In this paper, we explore further variations of the behavioural
approach /betting scheme: $n$-coherent and $n$-convex conditional
lower previsions, formally defined later on as generalisations of
the $n$-coherent (unconditional) previsions in \cite{wal91}. Our
major aims are:
\begin{itemize}
\item[a)]
to explore the flexibility of the behavioural approach and its capability to encompass different uncertainty models;
\item[b)]
to point out which are the basic axioms/properties of coherence
which hold even for much looser consistency concepts.
\end{itemize}
Referring to b) and with a view towards the utmost generality, we
shall mainly concentrate on the extreme quantitative models that can
be incorporated into a (modified) behavioural approach. This does
not imply that these models should be regarded as preferable to
coherent lower previsions. On the contrary they will not, as far as
certain questions are concerned. For instance, inferences will
typically be rather vague. However, it is interesting and somehow
surprising to detect that certain properties like the Generalised
Bayes Rule must hold even for such models, or that they can be
approached in terms of desirability.

$N$-coherence and $n$-convexity may be naturally seen as relaxations
of, respectively, (Williams) coherence and convexity. These and
other preliminary concepts are recalled in Section
\ref{sec:preliminaries}. Starting from the weakest reasonably sound
consistency concepts, we explore $2$-convex
lower previsions in Section~\ref{sec:2-convex_previsions}.
In Section \ref{sec:basic_2_convex} we characterise them
by means of axioms, on a special set of
conditional gambles generalising a linear space and termed $\dlin$
(Definition~\ref{def:def_dlin}, Proposition~\ref{2-convexity_structure}). Interestingly, it turns out that
$n$-convexity with $n\geq 3$ and convexity are equivalent on
$\dlin$. $2$-convex previsions display some drawbacks:
in Section \ref{2_convex_natural_extension},
it is shown that a $2$-convex natural extension may be defined
and its properties are discussed,
but its finiteness is not guaranteed.
Moreover, as detailed in Section \ref{sec:drawbacks_2_convexity},
the property of internality may fail
(with some limitations,
for instance lack of internality cannot be two-sided);
agreement with conditional implication (the Goodman-Nguyen
relation) is not guaranteed either.
In Section \ref{sec:centered_2-convex}, we show that the
special subset of centered $2$-convex previsions is not affected by
these problems. In Section \ref{sec:2_coherent}, $2$-coherent lower
previsions are discussed and characterised on $\dlin$ (Proposition
\ref{pro:conditional 2-convex prevision on linear space}).
We compare $2$-coherence and $n$-coherence in Section \ref{sec:2_n_coherence}:
again, $n$-coherence ($n\geq 3$) and coherence are equivalent on $\dlin$.
$N$-coherent previsions ($n\geq 3$) defined on a generic set of gambles $\mathcal{S}$ have no
$n$-coherent extension on sufficiently large supersets whenever the
equivalence does not hold already on $\mathcal{S}$. We show also that $2$-coherence should be
preferred to $2$-convexity when positive homogeneity and conjugacy
are required.
The $2$-coherent natural extension is introduced and studied
in Section \ref{sec:2_coherent_natural_extension}.
$2$-coherent lower previsions always have it.
The extent of the Generalised Bayes Rule for $2$-coherent lower previsions
is discussed in Section \ref{subsec:GBR}.
Models that can be accommodated into the
framework of $2$-convexity or $2$-coherence, but not of coherence, are presented in
Section \ref{sec:weakly_consistent_models}.
We focus on how $2$-convexity can motivate defining conditional versions of capacities and niveloids,
and on the consistency properties of Value-at-Risk,
a well-known risk measure which is centered $2$-convex,
but may even fail to be $2$-coherent.
In Section \ref{sec:weak_consistency} we analyse
$2$-convexity and $2$-coherence in a desirability
approach. Generalising prior work by Williams \cite{wil75, wil07}
for coherence, we focus on the correspondence between these
previsions and sets of desirable gambles, and on establishing the
ensuing desirability rules.
The major differences with the rules for Williams coherence are pointed out
in the comments following
Propositions \ref{pro:lower_from_set} and \ref{pro:set_from_lowerc}.
Section~\ref{sec:conclusions}
concludes the paper. An earlier presentation of the topics in this paper,
less extended and without proofs, was delivered at the ISIPTA'15 Symposium
\cite{pel15}.

\section{Preliminaries}
\label{sec:preliminaries}
The starting points for our investigation are the known consistency concepts of coherent and convex lower conditional prevision \cite{pel05, pel09, wil75, wil07}.
They both refer to an arbitrary (non-empty) set $\dset$ of conditional gambles,
that is of conditional bounded random variables.
We denote by $X|B$ a generic conditional gamble,
where $X$ is a gamble and $B$ is a non-impossible event ($B\neq\varnothing$).
It is understood here that $X:\prt\rightarrow\rset$ is defined on an underlying partition $\prt$
of atomic events $\omega$,
and that $B$ belongs to the powerset of $\prt$.
Therefore,
any $\omega\in\prt$ implies either $B$ or its negation $\nega{B}$
(in words, knowing that $\omega$ is true determines the truth value of $B$,
i.e. $B$ is known to be either true or false).
Given $B$,
the conditional partition $\prt|B$ is formed by the conditional events $\omega|B$,
such that $\omega$ implies $B$ (implies that $B$ is true) and $X|B:\prt|B\rightarrow\rset$
is such that $X|B(\omega|B)=X(\omega)$, $\forall \omega|B\in\prt|B$.
Because of this equality,
several computations regarding $X|B$ can be performed by means of the restriction of $X$ on $B$.
In particular,
it is useful for the sequel to recall that $\sup(X|B)=\sup_{B}X=\sup\{X(\omega):\omega\in\prt, \omega\Rightarrow B\}$,
and $\inf(X|B)=\inf_{B}X=\inf\{X(\omega):\omega\in\prt, \omega\Rightarrow B\}$.

As a special case, letting $\Omega$ be the sure event,
we have that $X|\Omega=X$ is an unconditional gamble.
Further,
$A|B$ is a conditional event if $A$ is an event
(or its indicator $I_{A}$ - we shall generally employ the same notation $A$ for both).

As customary,
without further qualifications,
a lower prevision $\lpr$ is
a map from $\dset$ into the real line,
$\lpr:\dset\rightarrow\reals$.
However,
a lower prevision is often interpreted as a supremum buying price \cite{wal91}.
For instance,
if a subject assigns $\lpr(X|B)$ to $X|B$,
he is willing to buy $X$,
conditional on $B$ occurring,
at any price lower than $\lpr(X|B)$.
Referring to this behavioural interpretation,
the following Definitions \ref{def:coh_conv_prev}, \ref{def:2-convexity}, \ref{def:cond_2-coherence} require different degrees of consistency for $\lpr$,
according to whether certain gains depending on $\lpr$ avoid losses bounded away from $0$.
They differ as to the buying and selling constraints they impose.
\vspace{0.1cm}
\begin{definition}
\label{def:coh_conv_prev}
\begin{itemize}
Let $\LP:\dset\rightarrow\Rset$ be given.
\item[a)]
$\LP$ is a \emph{coherent} conditional lower prevision on $\dset$ iff,
for all $m\in\natsetp$,
$\forall X_0|B_0,\ldots,X_m|B_m\in\dset$, $\forall s_0,\ldots,s_m\geq 0$,
defining
$\Ss=\bigvee\{B_{i}:s_{i}\neq 0, i=0,\ldots,m\}$ and
$\LG=\sum_{i=1}^{m}s_{i}B_{i}(X_{i}-\LP(X_{i}|B_{i}))-s_{0}B_{0}(X_{0}-\LP(X_{0}|B_{0}))$,
it holds,
whenever $\Ss\neq\varnothing$,
that $\sup\{\LG|\Ss\}\geq 0$.

\item[b)]
$\LP$ is a \emph{convex} conditional lower prevision on $\dset$ iff, for all $m\in\natset^{+}$,
$\forall X_0|B_0,\ldots,X_m|B_m\in\dset$, $\forall s_1,\ldots,s_m\geq 0$
such that
$\sum_{i=1}^{m}s_i=1$ \emph{(convexity constraint)},
defining $\LGC=\sum_{i=1}^{m}s_{i}B_{i}(X_{i}-\LP(X_{i}|B_{i}))-B_{0}(X_{0}-\LP(X_{0}|B_{0}))$,
 $\Ss=\bigvee\{B_{i}:s_{i}\neq 0, i=1,\ldots,m\}$,
it is $\sup\{\LGC|\Ss\vee B_{0}\}\geq 0$.

\item[b1)]
$\LP$ is centered convex or \emph{C-convex} on $\dset$ iff
it is convex and, $\forall X|B\in\dset$, it is $0|B\in\dset$ and $\lpr(0|B)=0$.
\end{itemize}
\end{definition}
In the behavioural interpretation recalled above,
Definition \ref{def:coh_conv_prev}a) considers buying at most $m$ conditional gambles
$X_1|B_1,\ldots,X_m|B_m$ (also no one, when $m=0$)
at prices $\lpr(X_1|B_1),\ldots,\lpr(X_m|B_m)$, respectively,
and selling at most one gamble $X_0|B_0$ at its supremum buying price $\lpr(X_0|B_0)$.
The gain $\LG$
is a linear combination with stakes $s_0,\ldots,s_m$ of the $m+1$ gains
from these transactions.
It is conditioned on $\Ss$,
to rule out both trivial transactions ($\LG=0$, since $s_0=\ldots=s_m=0$) and
the case that $\LG=0$
because no transaction takes place
(when $B_0,\ldots,B_m$ are all false).
Then, coherence requires the non-negativity of the supremum of $\LG$,
conditional on at least one non-trivial transaction being effective.
The interpretation of Definition \ref{def:coh_conv_prev}b) is similar:
what changes is the convexity constraint on the stakes
$(s_0=1), s_1, \ldots, s_m$.
This implies that $\LGC$ is the gain from one selling transaction and at least
one buying transaction.

The definition of coherent conditional lower prevision is a structure free version of Williams coherence,
discussed in \cite{pel09}.
It is more general than Walley's coherence \cite{wal91},
in particular it is not necessarily conglomerable and always allows for a \emph{natural extension},
i.e. there exists an extension on any set of a Williams coherent assessment that
is Williams coherent too and least committal.
The notion of convex lower prevision is still more general,
and was introduced in \cite{pel05},
extending the unconditional convexity studied in \cite{pel03}.
Convex previsions can incorporate various uncertainty models,
including convex risk measures,
non-normalised possibility measures,
and others.
However,
the special subclass of C-convex lower previsions guarantees
better consistency properties.
Among these,
there always exists a convex natural extension of these measures,
whose properties are analogous to those of the natural extension \cite[Theorem 9]{pel05}.

Even though coherent and convex lower previsions can be defined on any set of conditional gambles,
they are characterised by a few axioms on the special environment $\dlin$ defined next.
\begin{definition}
\label{def:def_dlin}
Let $\xset$ be a linear space of gambles and $\bset\subset\xset$ a set of  (indicators of) events in $\xset$.
Suppose that $\Omega\in\bset$ and that $\xset$ is \emph{stable by restriction},
i.e. $BX\in\xset, \forall B\in\bset, \forall X\in\xset$.
Setting $\bsetp=\bset-\{\varnothing\}$,
define
\begin{eqnarray}
\label{eq:def_dlin}
\dlin=\{X|B: X\in\xset, B\in\bsetp\}.
\end{eqnarray}
\end{definition}

Note that,
since $\bset\subset\xset$,
the condition $\Omega\in\bset$ implies $1\in\xset$ and, therefore,
$\xset$ contains all real constants.

The sets $\dlin$ may be viewed as generalisations to conditional gambles of
linear spaces of unconditional gambles,
to which they reduce when
$\bset=\{\Omega,\varnothing\}$.
Not surprisingly
then, characterisations on $\dlin$ have an unconditional counterpart
on linear spaces.

\begin{proposition}
\label{pro:conditional convex prevision on linear space}
Let $\LP:\dlin\rightarrow\Rset$ be a conditional lower prevision.
\begin{itemize}
\item[a)] $\lpr$ is coherent on $\dlin$ if and only if \cite{wil07}
\begin{itemize}
\item[(A1)] $\LP(X|B)-\LP(Y|B)\leq\sup\{X-Y|B\}$,
$\forall X|B, Y|B\in\dlin$.
\item[(A2)] $\LP(\lambda X|B)=\lambda\LP(X|B),
\forall X|B\in\dlin, \forall\lambda\geq 0$.
\item[(A3)] $\LP(X+Y|B)\geq\LP(X|B)+\LP(Y|B)$,
$\forall X|B$, $Y|B\in\dlin$.
\item[(A4)] $\LP(A(X-\LP(X|A\wedge B))|B)=0,
\forall X\in\xset,
\forall A, B\in\bsetp:
A\wedge B\neq\varnothing$.
\end{itemize}
\item[b)] $\lpr$ is convex on $\dlin$ if and only if (A1), (A4) and the following axiom hold  \cite[Theorem 8]{pel05}
\begin{itemize}
\item[(A5)] $\LP(\lambda X+(1-\lambda)Y|B)\geq\lambda\LP(X|B)+(1-\lambda)\LP(Y|B),
\forall X|B,Y|B\in\dlin, \forall \lambda\in ]0,1[$.\footnote{
Recall that the lower prevision $\lpr$ is termed convex referring to the convexity constraint $\sum_{i=1}^{m}s_i=1$ in Definition \ref{def:coh_conv_prev} b),
not to axiom (A5),
which actually tells us that $\lpr$ is concave, as a real functional.
}
\end{itemize}
\end{itemize}
\end{proposition}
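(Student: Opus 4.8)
The plan is to handle each of the two equivalences by the same recipe: the ``only if'' direction by feeding suitably chosen gambles and stakes into the betting scheme of Definition~\ref{def:coh_conv_prev}, and the ``if'' direction by reducing an arbitrary gain to one that lives on a single conditioning event. For necessity in a), starting from coherence I would read off (A1) by buying $Y|B$ and selling $X|B$ at unit stakes (then $\Ss=B$ and the requirement becomes $\sup\{Y-X|B\}\ge\LP(Y|B)-\LP(X|B)$); (A2) by comparing the pair ``buy $X|B$ at stake $\lambda$, sell $\lambda X|B$'' with its mirror, plus the two degenerate configurations ($m=0$ with $X_0|B_0=0|B$, and one null buying term $0|B$ with $s_0=0$) which force $\LP(0|B)=0$, hence $\LP(c|B)=c$ for real constants via (A1); (A3) by buying $X|B$ and $Y|B$ and selling $X+Y|B$; and (A4) by the pair ``buy $X|A\wedge B$, sell $A(X-c)|B$'' and its mirror, where $c:=\LP(X|A\wedge B)$, using the identity $B\cdot A(X-c)=(A\wedge B)(X-c)$ so that the gain collapses on $\Ss=B$ to $\pm B\,\LP(A(X-c)|B)$. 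Necessity in b) uses the same configurations as for (A1) and (A4) --- all of which satisfy the convexity constraint $\sum_i s_i=1$ --- together with a new one for (A5): buy $X|B$ and $Y|B$ at stakes $\lambda$ and $1-\lambda$ while selling $\lambda X+(1-\lambda)Y|B$. These verifications are routine once the bookkeeping of $\Ss$ and of the signs is carried out carefully.

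For sufficiency in a), assume (A1)--(A4) and fix a gain $\LG=\sum_{i=1}^{m}s_iB_i(X_i-\LP(X_i|B_i))-s_0B_0(X_0-\LP(X_0|B_0))$ with $\Ss\neq\varnothing$; we must show $\sup\{\LG|\Ss\}\ge 0$. Discard the buying terms with $s_i=0$ and group the remaining ones by conditioning event: for each event $C$ occurring among $B_1,\dots,B_m$, axioms (A2) and (A3) give $\sum_{i:B_i=C}s_i\LP(X_i|C)\le\LP(Y_C|C)$ with $Y_C:=\sum_{i:B_i=C}s_iX_i\in\xset$, so that, pointwise on $\prt$,
\[
\LG\ \ge\ \LG^{(1)}:=\sum_C Z_C-W,\qquad Z_C:=C\bigl(Y_C-\LP(Y_C|C)\bigr),
\]
with $W:=B_0(X_0'-\LP(X_0'|B_0))$, $X_0':=s_0X_0$ (using (A2) once more for the selling term). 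Every such $C$, and also $B_0$ when $s_0\neq0$, implies $\Ss$, so (A4) taken with conditioning event $\Ss$ yields $\LP(Z_C|\Ss)=0$ for each $C$ and $\LP(W|\Ss)=0$ (when $s_0=0$ the latter holds trivially, since then $W=0$ and $\LP(0|\Ss)=0$ by (A2)). Now (A2) and (A3) give $\LP\bigl(\sum_C Z_C\,\big|\,\Ss\bigr)\ge\sum_C\LP(Z_C|\Ss)=0$, and (A1) applied to the pair $\bigl(\sum_C Z_C,\,W\bigr)$ gives
\[
\sup\{\LG|\Ss\}\ \ge\ \sup\{\LG^{(1)}|\Ss\}\ \ge\ \LP\Bigl(\sum_C Z_C\,\Big|\,\Ss\Bigr)-\LP(W|\Ss)\ \ge\ 0 .
\]
All these conditional lower previsions are assessed because $\Ss\in\bsetp$.

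Sufficiency in b) runs along the same lines with (A5) in place of (A2) and (A3). Writing $\sigma_C:=\sum_{i:B_i=C}s_i$, so that $\sum_C\sigma_C=1$ once null stakes are discarded, the iterated form of (A5) gives $\sum_{i:B_i=C}s_i\LP(X_i|C)\le\sigma_C\LP(\widetilde Y_C|C)$ with $\widetilde Y_C:=\sum_{i:B_i=C}(s_i/\sigma_C)X_i\in\xset$, whence $\LGC\ge\sum_C\sigma_C\widetilde Z_C-W$, where $\widetilde Z_C:=C(\widetilde Y_C-\LP(\widetilde Y_C|C))$ and $W:=B_0(X_0-\LP(X_0|B_0))$. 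Applying (A4) with conditioning event $T:=\Ss\vee B_0$ gives $\LP(\widetilde Z_C|T)=0$ and $\LP(W|T)=0$; the iterated (A5) with weights $\sigma_C$ gives $\LP\bigl(\sum_C\sigma_C\widetilde Z_C\,\big|\,T\bigr)\ge 0$; and (A1) on the pair $\bigl(\sum_C\sigma_C\widetilde Z_C,\,W\bigr)$ yields $\sup\{\LGC|T\}\ge 0$, as required.

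The main obstacle is the asymmetric treatment of the single selling term $-s_0B_0(X_0-\LP(X_0|B_0))$: it cannot simply be pushed through super-additivity (part a) or through (A5) (part b), since those inequalities point the wrong way on a negatively-signed summand, so the ``group the buys'' reduction alone does not close the argument. The device that makes everything fit is (A4): it lets one re-express \emph{both} the aggregated buying contribution and the selling gamble as conditional gambles whose lower prevision given $\Ss$ (resp.\ $T$) is zero, after which a \emph{single} appeal to axiom (A1) on the pair (combined buy, sell) disposes of both halves at once. What then remains is only the verification of the harmless degenerate situations (no buying terms at all, some $Y_C\equiv 0$, $B_0$ coinciding with one of the $C$'s) and the remark that every conditioning event involved is non-empty because $\dlin$ contains only conditional gambles $X|B$ with $B\in\bsetp$.
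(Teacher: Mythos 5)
Your proof is correct, but note that the paper itself offers no proof of this proposition: part a) is imported from \cite{wil07} and part b) from \cite[Theorem 8]{pel05}, so the only internal point of comparison is the technique the authors use for the analogous characterisations of $2$-convexity and $2$-coherence (Propositions \ref{2-convexity_structure} and \ref{pro:conditional 2-convex prevision on linear space}). Your sufficiency argument is precisely the $m$-term generalisation of that technique: bound $\sup\{\LG|\Ss\}$ from below via (A1) by a difference of conditional lower previsions, annihilate each elementary gain with (A4), and aggregate the buying terms with (A2)--(A3) (respectively with iterated (A5) and the convex weights); your necessity direction uses the standard choices of gambles and stakes, including the correct collapse $B\cdot A(X-c)=(A\wedge B)(X-c)$ that makes the (A4) configurations work, and the degenerate bets forcing $\LP(0|B)=0$. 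Two minor remarks. First, the preliminary grouping of the buying terms by conditioning event (your $Y_C$ and $\widetilde Y_C$) is a harmless detour: you may apply (A2)--(A3), or iterated (A5) with the weights $s_i$ themselves, directly to the gambles $B_i(X_i-\LP(X_i|B_i))$ conditioned on $\Ss$ (resp.\ on $\Ss\vee B_0$), each of which has conditional lower prevision zero by (A4); this is exactly what the paper does in its two-term proofs. Second, your argument (like the paper's own proofs) presupposes that $\Ss$, a finite disjunction of conditioning events, and the conjunction $A\wedge B$ in (A4) are themselves legitimate conditioning events, i.e.\ belong to $\bsetp$; Definition \ref{def:def_dlin} does not state this closure of $\bset$, but (A4) is only well posed under it and the paper conditions on $B_0\vee B_1$ in the same tacit way, so you are consistent with the conventions in force --- it would simply be worth making that assumption explicit at the outset.
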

\begin{remark}
\label{rem:2-convexity}
Exploiting some equivalences between axioms or groups of axioms,
Proposition \ref{pro:conditional convex prevision on linear space} as well as the later Propositions \ref{2-convexity_structure} and \ref{pro:conditional 2-convex prevision on linear space}
could be restated in a different form.
For instance, axiom (A1) is equivalent to the following
\begin{itemize}
\item[(A$\textit{1}'$)] If  $X|B,Y|B\in\dlin$, $\mu\in\rset$ are such that $X|B\geq Y|B+\mu$,
then $\LP(X|B)\geq\LP(Y|B)+\mu$.
\end{itemize}
Axiom (A1') is also equivalent to monotonicity plus translation invariance:
\begin{itemize}
\item[-] If  $X|B,Y|B\in\dlin$ and $X|B\geq Y|B$,
then $\LP(X|B)\geq\LP(Y|B)$ (monotonicity).
\item[-] If  $X|B\in\dlin$, $\mu\in\rset$,  
then $\LP(X+\mu|B)=\LP(X|B)+\mu$ (translation invariance).
\end{itemize}


Alternatively, (A1) may be replaced in Proposition \ref{pro:conditional convex prevision on linear space}
by $\lpr(X|B)\geq\inf(X|B)$, $\forall X|B\in\dlin$,
thus corresponding to the original version in \cite{wil07}.
\end{remark}
Condition (A4) is the \emph{Generalised Bayes Rule (GBR)},
introduced in \cite{wil75, wil07} and studied also in \cite{wal91} in the special case $B=\Omega$.

Since our discussion will focus on minimal consistency properties for a conditional lower prevision,
we have to mention a generalisation to a conditional framework of the implication (inclusion) relation between events,
which is termed Goodman--Nguyen relation ($\leq_{GN}$).
In fact, suppose $A\Rightarrow B$ (or $A\subseteq B$).
Then, asking that $\mu(A)\leq\mu(B)$ is a really minimal rationality requirement for any $\mu$ aiming at measuring how likely an event is,
given that, whenever event $A$ proves to be true,
$B$ comes true too.
The following extension of the implication to conditional events was proposed in \cite{goo88}:
\begin{eqnarray}
\label{eq:Goodman_Nguyen_events}
\begin{array}{lll}
A|B\leq_{GN} C|D &\mbox{ iff } A\wedge B\Rightarrow C\wedge D
&\mbox{ and } \nega{C}\wedge D\Rightarrow \nega{A}\wedge B.
\end{array}
\end{eqnarray}
The Goodman-Nguyen relation $\leq_{GN}$ was further extended to conditional gambles in \cite{pel14}:
\begin{eqnarray*}
\label{eq:Goodman_Nguyen_random_numbers}
\begin{array}{cc}
X|B\leq_{GN} Y|D \mbox{ iff }
I_{B}X+I_{\nega{B}\vee D}\sup(X|B)\leq I_{D}Y+I_{B\vee \nega{D}}\inf(Y|D)
\end{array}
\end{eqnarray*}
showing that $X|B\leq_{GN} Y|D$ implies $\lpr(X|B)\leq\lpr(Y|D)$ for a C-convex or coherent $\lpr$ \cite[Proposition 10]{pel14}.

\section{$2$-convex lower previsions}
\label{sec:2-convex_previsions}
In Definition \ref{def:coh_conv_prev}, a) and b),
there is no upper bound to $m\in\natset$.
One may think of introducing it as a natural way of weakening coherence and convexity.
More precisely,
let us call \emph{elementary gain} on $X_i|B_i$ any term $s_{i}B_{i}(X_{i}-\lpr(X_{i}|B_{i}))$,
with the proviso that $-B_0(X_0-\lpr(X_0|B_0))$ in
Definition \ref{def:coh_conv_prev} b) is also an elementary gain,
formally corresponding to $s_0=-1$.
Then,
we may decide that no more than $n$ elementary gains are allowed in either
$\LG$ (Definition \ref{def:coh_conv_prev}, a)) or $\LGC$ (Definition \ref{def:coh_conv_prev}, b)).
When doing so,
we speak of \emph{$n$-coherent} or \emph{$n$-convex} lower previsions.
This approach extends the notion of $n$-coherent (unconditional) prevision in \cite[Appendix B]{wal91}.

Intuition suggests that the smaller $n$ is,
the looser the corresponding consistency concept is.
In the extreme cases $n$ may be as small as $1$ with coherence,
$2$ with convexity.

However,
\emph{$1$-coherence} is too weak.
In fact, $\lpr$ is $1$-coherent on $\dset$ iff, $\forall X_0|B_0\in\dset$, $\forall s_0\in\rset$,
$\sup\{s_{0}B_{0}(X_{0}-\lpr(X_{0}|B_{0}))|B_{0}\}\geq 0$.

It is easy to see that this is equivalent to \emph{internality},
i.e. to requiring that $\lpr(X_0|B_0)\in [\inf(X_0|B_0),\sup(X_0|B_0)]$, $\forall X_0|B_0\in\dset$.
\begin{remark}
\emph{($1$-Avoiding Uniform Loss ($1$-AUL))}

A still weaker concept is that of $1$-Avoiding Uniform Loss ($1$-AUL).
Say that $\LP$ is $1$-AUL on $\dset$ iff $\forall X_0|B_0\in\dset$, $\forall s_0>0$,
\begin{eqnarray}
\label{eq:1_AUL}
\sup\{s_0 B_0(X_0-\LP(X_0|B_0))|B_0\}\geq 0.
\end{eqnarray}
The condition of $1$-AUL is equivalent to $\LP(X_0|B_0)\leq\sup(X_0|B_0)$, $\forall X_0|B_0\in\dset$.
In particular, this implies $\LP(0|B)\leq 0$, $\forall 0|B\in\dset$.

The wording $1$-AUL suggests its derivation from a concept of Avoiding Uniform Loss (AUL),
which may in fact be obtained from Definition \ref{def:coh_conv_prev} a) by replacing
`$m\in\natsetp$' with `$m\in\natset^{+}$'
(and consequently the gain $\LG$ with
$\LG_{AUL}=\sum_{i=1}^{m}s_i B_i(X_i-\lpr(X_i|B_i))$, $s_i\geq 0$).

The notion of $1$-AUL has an ancillary role in the theory of $2$-convex and $2$-coherent lower previsions:
it is a rather mild prerequisite to certain properties.
In this sense,
there is a similarity with the role of the condition of AUL for convex previsions
\cite{pel03,pel05} (cf. also Remark \ref{rem:finiteness_2_ne} in Section \ref{sec:2_coherent_natural_extension}).
\end{remark}
Since internality alone does not seem enough as a rationality requirement,
we turn our attention in this section to what seems to be the next weakest consistency notion,
that is $2$-convexity.\footnote{
\label{foo:1_become_2}
$2$-convex previsions were termed $1$-convex in \cite{bar09,pel14}.
Here we prefer the locution `$2$-convex' by analogy with the rule for fixing $n$
(as the number of elementary gains) in `$n$-coherent' in \cite{wal91}.
}
\begin{definition}
\label{def:2-convexity}
$\LP:\dset\rightarrow\rset$ is a \emph{2-convex} conditional lower prevision on $\dset$ iff,
$\forall X_0|B_0, X_1|B_1\in\dset$,
we have that,
defining $\LGDC=B_{1}(X_{1}-\LP(X_{1}|B_{1}))-B_{0}(X_{0}-\LP(X_{0}|B_{0}))$,
\begin{eqnarray}
\label{eq:cond_2-convexity}
\sup(\LGDC|B_{0}\vee B_{1})\geq 0.
\end{eqnarray}
\end{definition}

\subsection{Basic properties of $2$-convex lower previsions}
\label{sec:basic_2_convex}
We explore now some basic features of $2$-convex previsions.
Some critical aspects are discussed next,
showing in Section \ref{sec:centered_2-convex} that they can be solved resorting to the subclass of centered $2$-convex previsions.

A remarkable result in our framework is the characterisation of $2$-convexity
on a structured set $\dlin$.
\begin{proposition}
\label{2-convexity_structure}
A conditional lower prevision $\lpr:\dlin\rightarrow\rset$ is $2$-convex
on $\dlin$ if and only if (A1) and (A4) hold.
\begin{proof}
Suppose first that (A1) and (A4) hold.
Then, $\forall X_0|B_0, X_1|B_1\in\dlin$,
we obtain,
using (A1) at the first inequality and (A4) at the second,
\begin{eqnarray*}
\label{eq:proof_A1A4}
\begin{array}{ll}
&\sup\{B_1(X_1-\lpr(X_1|B_1))-B_0(X_0-\lpr(X_0|B_0))|B_0\vee B_1\}\geq \\ &\lpr(B_1(X_1-\lpr(X_1|B_1))|B_0\vee B_1)-\lpr(B_0(X_0-\lpr(X_0|B_0))|B_0\vee B_1)=0.
\end{array}
\end{eqnarray*}
Therefore, $\lpr$ is $2$-convex.

Conversely,
let $\lpr$ be $2$-convex.
Then, the proof that (A1) and (A4) hold is part of the proof of Theorem 8 in \cite{pel05}.
\end{proof}
\end{proposition}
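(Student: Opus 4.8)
The plan is to prove the two directions separately, as is natural for an ``if and only if'' characterisation, and to reduce as much as possible to the betting-scheme definition (Definition~\ref{def:2-convexity}) on the one hand and to the axioms (A1), (A4) on the other.

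\medskip
\noindent\textbf{Sufficiency ((A1) and (A4) $\Rightarrow$ $2$-convexity).}
First I would fix arbitrary $X_0|B_0, X_1|B_1\in\dlin$ and aim to verify inequality~\eqref{eq:cond_2-convexity}. The key observation is that, since $\xset$ is a linear space stable by restriction and closed under real constants, both $B_1(X_1-\lpr(X_1|B_1))$ and $B_0(X_0-\lpr(X_0|B_0))$ lie in $\xset$, hence both conditional gambles $B_i(X_i-\lpr(X_i|B_i))\,|\,(B_0\vee B_1)$ belong to $\dlin$. This is what makes the axioms applicable to them. I would then bound the supremum of the difference from below by using (A1) in the form $\lpr(U|C)-\lpr(V|C)\leq\sup\{U-V|C\}$ with $U=B_1(X_1-\lpr(X_1|B_1))$, $V=B_0(X_0-\lpr(X_0|B_0))$, $C=B_0\vee B_1$, and then evaluate each of the two lower previsions $\lpr(B_i(X_i-\lpr(X_i|B_i))\,|\,B_0\vee B_1)$ using (A4): taking $A=B_i$, $B=B_0\vee B_1$ in (A4) (noting $B_i\wedge(B_0\vee B_1)=B_i\neq\varnothing$), each such term equals $0$. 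Chaining these gives $\sup(\LGDC|B_0\vee B_1)\geq 0-0=0$, which is exactly $2$-convexity. This is the short display already hinted at in the excerpt, and I expect it to be essentially routine once the membership-in-$\dlin$ point is made explicit.

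\medskip
\noindent\textbf{Necessity ($2$-convexity $\Rightarrow$ (A1) and (A4)).}
Here the strategy is to extract (A1) and (A4) as special instances of the single inequality~\eqref{eq:cond_2-convexity}, applied for cleverly chosen conditional gambles. For (A1), given $X|B, Y|B\in\dlin$, I would set $B_0=B_1=B$, $X_1=X$, $X_0=Y$, so that $\LGDC|B = \big((X-\lpr(X|B))-(Y-\lpr(Y|B))\big)|B = (X-Y)|B - (\lpr(X|B)-\lpr(Y|B))$; $2$-convexity says the supremum of this over $B$ is $\geq 0$, i.e. $\sup\{X-Y|B\}-(\lpr(X|B)-\lpr(Y|B))\geq 0$, which rearranges to (A1). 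For (A4), given $X\in\xset$ and $A,B\in\bsetp$ with $A\wedge B\neq\varnothing$, I would apply $2$-convexity twice — once to get ``$\geq 0$'' and once with the roles of the two conditional gambles swapped to get ``$\leq 0$'' — in each case choosing one of the two slots to be $A(X-\lpr(X|A\wedge B))\,|\,B$ and the other slot to produce a vanishing elementary gain (for instance taking the other gamble to be $0|B$ with the convention that its contribution collapses, or taking $X_j|B_j$ with $X_j=\lpr(X_j|B_j)$ a constant). One then has to check that the conditioning event $B_0\vee B_1$ reduces to $B$ and that $\sup$ of $A(X-\lpr(X|A\wedge B))\,|\,B$ relates to $\lpr(A(X-\lpr(X|A\wedge B))\,|\,B)$ in the right way. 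Rather than redo this from scratch, I would cite that this derivation is precisely a sub-argument inside the proof of Theorem~8 in \cite{pel05}, since that theorem characterises convexity on $\dlin$ and its proof already isolates (A1) and (A4) from two-term betting-scheme inequalities; convexity implies $2$-convexity, but more to the point the relevant step of that proof only ever uses two elementary gains.

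\medskip
The main obstacle, and the place where care is genuinely needed, is the necessity direction for (A4): one must engineer the second conditional gamble in the $2$-convex inequality so that its elementary gain is identically zero on the relevant conditioning event (so that $\LGDC$ reduces to a single term), and one must handle the two inequalities (the ``$\geq$'' and the ``$\leq$'') that together force the GBR expression to be exactly $0$ — in particular making sure the conditioning event is $B$ and not a strict superset, which is where the hypotheses $\Omega\in\bset$, stability by restriction, and $A\wedge B\neq\varnothing$ get used. Since this is exactly the content of part of the proof of \cite[Theorem~8]{pel05}, the cleanest exposition is to do the easy sufficiency direction in full and to invoke that reference for the extraction of (A1) and (A4) in the converse, as the excerpt's proof sketch does.
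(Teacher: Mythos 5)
Your overall structure is the same as the paper's: the sufficiency direction is proved exactly as in the text (bound the supremum from below via (A1) with $U=B_1(X_1-\lpr(X_1|B_1))$, $V=B_0(X_0-\lpr(X_0|B_0))$, $C=B_0\vee B_1$, then kill both lower previsions with (A4)), and for necessity you, like the authors, ultimately defer to the proof of Theorem~8 in \cite{pel05}, noting that the relevant steps there only involve two elementary gains. Your direct extraction of (A1) from Definition~\ref{def:2-convexity} (take $B_0=B_1=B$, $X_1=X$, $X_0=Y$) is also correct and is indeed how that part goes.

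However, the mechanism you sketch for extracting (A4) would not work as stated, and it is worth being precise about why. You propose to put $A(X-\lpr(X|A\wedge B))|B$ in one slot and to make the other slot's elementary gain vanish, e.g.\ by choosing $0|B$ or a constant gamble $X_j=\lpr(X_j|B_j)$. For a general (possibly non-centered) $2$-convex prevision neither choice yields a vanishing gain: the gain from $0|B$ is $-B\lpr(0|B)$ and $\lpr(0|B)$ need not be $0$ (the paper explicitly allows $\lpr(0|B)\neq 0$ in this section), and similarly $\lpr(c|B)$ need not equal $c$ for a constant $c$. Worse, even if the second gain did vanish, the two resulting inequalities would only give $\inf\{A(X-\lpr(X|A\wedge B))|B\}\leq\lpr(A(X-\lpr(X|A\wedge B))|B)\leq\sup\{A(X-\lpr(X|A\wedge B))|B\}$, i.e.\ internality, not the exact value $0$ required by (A4). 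The derivation that does work (and is the one inside \cite[Theorem~8]{pel05}) pairs $A(X-\lpr(X|A\wedge B))|B$ with $X|A\wedge B$ itself: writing $W=A(X-\lpr(X|A\wedge B))$, the two random parts cancel because $BW=(A\wedge B)(X-\lpr(X|A\wedge B))$, so $\LGDC$ reduces to $B\,\lpr(W|B)$ conditional on $B_0\vee B_1=B$, whence $\lpr(W|B)\geq 0$; swapping the roles of the two gambles gives $-\lpr(W|B)\geq 0$, and together these force $\lpr(W|B)=0$. Since you do fall back on the citation rather than on this sketch, your proof as written is acceptable, but the parenthetical construction should be replaced by the cancellation argument just described.
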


To point out an important consequence of Proposition \ref{2-convexity_structure},
compare it with Proposition \ref{pro:conditional convex prevision on linear space} b).
It follows at once that the difference between $2$-convexity and convexity,
on $\dlin$,
is due to axiom (A5).
On the other hand,
the proof that a convex prevision on $\dlin$ must satisfy (A5),
given in \cite[Theorem 8]{pel05},
only involves a gain $\LGC$ made up of $3$ elementary gains,
i.e. it does not fully exploit convexity,
but only $3$-convexity.
This justifies the following conclusion:
\begin{proposition}
\label{pro:n_equivalence}
On $\dlin$, $n$-convexity with $n\geq 3$ and convexity are equivalent concepts.
\end{proposition}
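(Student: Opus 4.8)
The plan is to prove the equivalence by sandwiching $n$-convexity ($n\geq 3$) between convexity and $3$-convexity, and then showing $3$-convexity already forces (A5) on $\dlin$. First I would note the trivial implications: convexity implies $n$-convexity for every $n$ (restricting the number of elementary gains can only weaken the requirement), and $n$-convexity implies $3$-convexity whenever $n\geq 3$ (a gain with $3$ elementary terms is in particular a gain with at most $n$ terms, since we may take the extra stakes to be $0$). So the whole statement reduces to the single implication: if $\lpr:\dlin\to\rset$ is $3$-convex, then $\lpr$ is convex on $\dlin$.

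By Proposition~\ref{pro:conditional convex prevision on linear space}b), convexity on $\dlin$ is characterised by (A1), (A4) and (A5). Since $3$-convexity is in particular $2$-convexity, Proposition~\ref{2-convexity_structure} already gives us (A1) and (A4) for free. Hence the only thing left to check is that a $3$-convex $\lpr$ on $\dlin$ satisfies (A5). This is exactly the point flagged in the discussion following Proposition~\ref{2-convexity_structure}: the proof in \cite[Theorem 8]{pel05} that a convex prevision satisfies (A5) uses a gain $\LGC$ built from only $3$ elementary gains. Concretely, to derive $\lpr(\lambda X+(1-\lambda)Y|B)\geq\lambda\lpr(X|B)+(1-\lambda)\lpr(Y|B)$ one forms the convex combination $\LGC$ with stakes $s_0=1$ on the selling gamble $Z=\lambda X+(1-\lambda)Y$ conditional on $B$, and stakes $\lambda$, $1-\lambda$ on the two buying gambles $X|B$, $Y|B$; this is a gain made of exactly $3$ elementary gains, so the defining inequality of $3$-convexity applies to it and yields (A5) after rearranging. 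I would reproduce this short computation (or simply cite it from \cite{pel05}), being careful that all conditioning events coincide with $B$ so that the "$\Ss\vee B_0$" conditioning event in Definition~\ref{def:coh_conv_prev}b) is just $B$, and that $\lambda\in\,]0,1[$ keeps both buying stakes strictly positive so the elementary gains genuinely number $3$ (if $\lambda\in\{0,1\}$ the statement of (A5) is vacuous or trivial).

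Putting the pieces together: convexity $\Rightarrow$ $n$-convexity $\Rightarrow$ $3$-convexity $\Rightarrow$ [(A1) $\wedge$ (A4) $\wedge$ (A5)] $\Leftrightarrow$ convexity, so all the notions in the chain collapse on $\dlin$. The main obstacle, such as it is, is purely bookkeeping: making sure the gain used to extract (A5) really is admissible under the $3$-convexity definition (correct sign convention for the $s_0=-1$ selling term, correct conditioning event, correct count of elementary gains), and that no step secretly needs a fourth elementary gain. There is no genuine analytic difficulty here — everything rides on the observation, already isolated in the text, that the relevant argument in \cite[Theorem 8]{pel05} never uses more than three stakes.
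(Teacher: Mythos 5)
Your proof is correct and follows essentially the same route as the paper: the paper's justification is precisely that (A1) and (A4) already follow from $2$-convexity (Proposition~\ref{2-convexity_structure}), while the derivation of (A5) in \cite[Theorem 8]{pel05} uses a gain with only three elementary terms, hence only $3$-convexity, so by Proposition~\ref{pro:conditional convex prevision on linear space}b) all notions collapse on $\dlin$. Your explicit choice of stakes $\lambda$, $1-\lambda$, $s_0=1$ is exactly the gain implicit in that argument, so there is nothing to add.
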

Hence,
the very difference between convexity and $n$-convexity reduces to that between
convexity and $2$-convexity,
at least on $\dlin$.
Yet,
if $\lpr$ is defined on a set $\dset$ other than $\dlin$,
we may think of extending it to some $\dlin\supset\dset$.
If $\lpr$ is $n$-convex on $\dset$, $n\geq 3$,
and has an $n$-convex extension to $\dlin$,
then $\lpr$ is convex on $\dlin$ and therefore also on $\dset$.
It ensues that if $\lpr$ is $n$-convex ($n\geq 3$) but not convex on $\dset$,
$\lpr$ will have no $n$-convex extension on any sufficiently large superset of $\dset$
(any $\dset^{*}$ including some $\dlin$ containing $\dset$)
 - see also the later Example \ref{ex:non-extension} in Section \ref{sec:2_n_coherence}.
This is a negative aspect of $n$-convexity,
when $n\geq 3$.
More generally, the discussion above shows that $n$-convex previsions are not
particularly significant as an autonomous concept,
when $n\geq 3$.

\subsection{The $2$-convex natural extension}
\label{2_convex_natural_extension}
Turning again to $2$-convex previsions,
let us define a special extension,
the $2$-convex natural extension.
\begin{definition}
\label{def:2-convex_natext}
Given a lower prevision $\lpr:\dset\rightarrow\rset$
and an arbitrary conditional gamble $Z|B$,
let
\begin{eqnarray}
\label{eq:2-convex_setL}
\begin{array}{lll}
L(Z|B)=&\{\alpha:\sup\{A(X-\lpr(X|A))
-B(Z-\alpha)|A\vee B\}<0,\\
&\mbox{ for some } X|A\in\dset\}.
\end{array}
\end{eqnarray}
Then the \emph{$2$-convex natural extension} $\LEDC$ of $\lpr$ on $Z|B$ is
\begin{eqnarray}
\label{eq:2-convex_natext}
\LEDC(Z|B)=\sup L(Z|B).
\end{eqnarray}
\end{definition}
In general,
$L(Z|B)$ may be empty,
in which case $\LEDC(Z|B)=-\infty$,
following the usual convention for suprema.
When $L(Z|B)\neq\emptyset$,
it is instead possible that $\LEDC(Z|B)=+\infty$.
The results in the next proposition are helpful in hedging these two occurrences.
\begin{proposition}
\label{pro:2-convex_natext_sufficient}
\begin{itemize}
\item[a)] $L(Z|B)\neq\emptyset$, if $\exists\ Y|C\in\dset$ such that $C\Rightarrow B$.
\item[b)] Let $\lpr$ be $2$-convex and
such that $0|B\in\dset$ and $\lpr(0|B)=0$ $\forall X|B\in\dset$.
Given $0|C\notin\dset$,
the extension of $\lpr$ on $\dset\cup\{0|C\}$ such that $\lpr(0|C)=0$ is $2$-convex.
\item[c)] When $L(Z|B)\neq\emptyset$, $L(Z|B)=\ ]-\infty,\LEDC(Z|B)[$.
\item[d)] If $L(Z|B)\neq\emptyset$
and $\sup(X|A)\geq\lpr(X|A)$, $\forall X|A\in\dset$,
then $\LEDC(Z|B)\leq\sup(Z|B)$, $\forall Z|B$.
\item[e)] Let $\lpr$ be $2$-convex and $0|B\in\dset$, $\forall X|B\in\dset$.
Then, $\forall X|B\in\dset$, $\sup(X|B)\\ \geq\lpr(X|B)$
iff $\lpr(0|B)\leq 0$.
\end{itemize}
\end{proposition}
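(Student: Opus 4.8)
The plan is to prove the two implications separately, both relying on the characterisation of $2$-convexity through Definition \ref{def:2-convexity}, specialised to convenient choices of the two conditional gambles involved.

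First I would prove the direction ``$\sup(X|B)\geq\lpr(X|B)$ for all $X|B\in\dset$ implies $\lpr(0|B)\leq 0$''. Fix $X|B\in\dset$; by hypothesis $0|B\in\dset$, so I may apply the inequality to the particular gamble $0|B$, obtaining $\sup(0|B)\geq\lpr(0|B)$. Since $\sup(0|B)=0$, this gives $\lpr(0|B)\leq 0$ directly. This direction is essentially immediate and does not even use $2$-convexity.

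The substantive direction is ``$\lpr(0|B)\leq 0$ for all $0|B\in\dset$ implies $\sup(X|B)\geq\lpr(X|B)$ for all $X|B\in\dset$''. Here I would use $2$-convexity with the choice $X_1|B_1 = X|B$ and $X_0|B_0 = 0|B$ (so $B_0=B_1=B$, and $0|B\in\dset$ by hypothesis). Then $\LGDC = B(X-\lpr(X|B)) - B(0-\lpr(0|B)) = B(X-\lpr(X|B)+\lpr(0|B))$, and Definition \ref{def:2-convexity} yields
\[
0 \leq \sup(\LGDC\,|\,B) = \sup(X|B) - \lpr(X|B) + \lpr(0|B).
\]
Since $\lpr(0|B)\leq 0$ by hypothesis, this gives $\sup(X|B) - \lpr(X|B) \geq -\lpr(0|B) \geq 0$, i.e. $\sup(X|B)\geq\lpr(X|B)$, as required.

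The only point needing care — and the main (minor) obstacle — is making sure the two conditional gambles fed into Definition \ref{def:2-convexity} both genuinely lie in $\dset$: this is why the hypothesis ``$0|B\in\dset$ for all $X|B\in\dset$'' is needed, guaranteeing that $0|B$ is an admissible choice for $X_0|B_0$ whenever $X|B\in\dset$. Beyond that, the argument is just the computation of $\sup(\LGDC|B)$ using $\sup(c + Y\,|\,B) = c + \sup(Y|B)$ for a constant $c$, together with $\sup(0|B)=0$. No appeal to (A1), (A4), or the structure of $\dlin$ is needed, since we work on a generic $\dset$.
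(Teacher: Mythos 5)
Your treatment of item e) is correct and essentially coincides with the paper's own proof: the forward direction is the same one-line specialisation to $0|B$ (using $\sup(0|B)=0$), and the reverse direction uses exactly the paper's choice $X_1|B_1=X|B$, $X_0|B_0=0|B$ in Definition \ref{def:2-convexity}, so that $\sup(\LGDC|B)=\sup(X|B)-\lpr(X|B)+\lpr(0|B)\geq 0$ and the hypothesis $\lpr(0|B)\leq 0$ yields $\lpr(X|B)\leq\sup(X|B)$. (The paper merely upper-bounds the supremum instead of computing it exactly; this is an immaterial stylistic difference.) Your remark that the hypothesis $0|B\in\dset$ is precisely what legitimates feeding $0|B$ into the definition is also the right point of care.

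However, the statement to be proved is the whole of Proposition \ref{pro:2-convex_natext_sufficient}, items a) through e), and your proposal addresses only item e). Items a) (non-emptiness of $L(Z|B)$ when some $Y|C\in\dset$ has $C\Rightarrow B$), b) ($2$-convexity of the extension obtained by adding $0|C$ with $\lpr(0|C)=0$), c) (the identity $L(Z|B)=\ ]-\infty,\LEDC(Z|B)[$), and d) (the bound $\LEDC(Z|B)\leq\sup(Z|B)$ under the stated hypotheses) are left without any argument, so as a proof of the full proposition the proposal is incomplete; these parts require separate reasoning about the set $L(Z|B)$ of Definition \ref{def:2-convex_natext}, which your argument for e) does not touch.
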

\begin{proof}
\emph{Proof of a).}
Identical to the proof of Proposition 6 in \cite{pel05}.

\emph{Proof of b).}
To check that the extension on $\dset\cup\{0|C\}$ with $\lpr(0|C)=0$
is $2$-convex,
we only have to check the suprema of two non-trivial gains in Definition \ref{def:2-convexity}:
the one arising from buying $X|B$ and selling $0|C$,
and that corresponding to buying $0|C$ and selling $X|B$.

In the former situation,
the gain is
\begin{eqnarray*}
\label{eq:gain_b}
\LGDC=B(X-\lpr(X|B))-C(0-\lpr(0|C))=B(X-\lpr(X|B)) - B(0-\lpr(0|B)),
\end{eqnarray*}
and
$\sup(\LGDC|B\vee C)\geq\sup(\LGDC|B)=\sup(B(X-\lpr(X|B))-B(0-\lpr(0|B))|B)\geq 0$,
using $2$-convexity of $\lpr$ on $\dset$ at the last inequality.

The latter situation can be treated analogously.

\emph{Proof of c).}
We show first that $L(Z|B)\subseteq\ ]-\infty,\LEDC(Z|B)[$.
Let $\alpha\in L(Z|B)$ such that
$s=\sup\{A(X-\lpr(X|A))-B(Z-\alpha)|A\vee B\}<0$ 
for some $X|A\in\dset$.
By \eqref{eq:2-convex_natext}, $\alpha\leq\LEDC(Z|B)$.
If \emph{ex absurdo} $\alpha=\LEDC(Z|B)$,
taking $\delta>0$ such that $s<s+\delta<0$,
we get 
$\sup\{A(X-\lpr(X|A))-B(Z-(\alpha+\delta))|A\vee B\}=
\sup\{A(X-\lpr(X|A))-B(Z-\alpha)+B\delta|A\vee B\}\leq
s+\sup\{B\delta|A\vee B\}\leq s+\delta<0$,
a contradiction.

Conversely,
let $\alpha\in ]-\infty,\LEDC(Z|B)[$.
Then there exists $\beta\in L(Z|B):\alpha<\beta\leq\LEDC(Z|B)$.
Further,
$\sup\{A(X-\lpr(X|A))-B(Z-\alpha)|A\vee B\}\leq\sup\{A(X-\lpr(X|A))-B(Z-\beta)|A\vee B\}<0$,
which implies §$\alpha\in L(Z|B)$.

\emph{Proof of d).}
We show that $\sup(Z|B)\notin L(Z|B)$.
Recalling \eqref{eq:2-convex_setL},
and since $-B(Z-\sup(Z|B))\geq 0$,
$\sup\{A(X-\lpr(X|A))-B(Z-\sup(Z|B))|A\vee B\}\geq
\sup\{A(X-\lpr(X|A))|A\vee B\}\geq\sup\{A(X-\lpr(X|A))|A\}=\sup(X|A)-\lpr(X|A)\geq 0$.
This means that $\sup(Z|B)\notin L(Z|B)$.
By c), $\sup(Z|B)\geq\LEDC(Z|B)$.

\emph{Proof of e).}
If $\sup(X|B)\geq\lpr(X|B)$,
$\forall X|B\in\dset$,
then in particular $\lpr(0|B)\leq\sup(0|B)=0$.

As for the reverse implication,
let $\lpr(0|B)\leq 0$.
Since $\lpr$ is $2$-convex, it holds that
$0\leq\sup\{B(X-\lpr(X|B))-B(0-\lpr(0|B))|B\}=\sup\{B(X-\lpr(X|B))+B\lpr(0|B)|B\}\leq
\sup\{X|B-\lpr(X|B)\}=\sup(X|B)-\lpr(X|B)$,
that is $\lpr(X|B)\leq\sup(X|B)$.
\end{proof}
Parts a) and b) of Proposition \ref{pro:2-convex_natext_sufficient} suggest a simple way
to ensure $\LEDC(Z|B)\neq -\infty$:
just add the gamble $0|B$ to $\dset$,
putting $\lpr(0|B)=0$.
To guarantee $\LEDC(Z|B)\neq +\infty$,
it is sufficient that any $0|C$ in $\dset$ (or added to $\dset$)
is given a non-positive lower prevision,
by d) and e).
Clearly, the simplest and most obvious choice is to put $\lpr(0|C)=0$, $\forall 0|C$.
This would make $\lpr$ a centered $2$-convex lower prevision;
in the remainder of this section we do not however rule out the possibility that $\lpr(0|C)\neq 0$ for some $0|C$.

The properties of the $2$-convex natural extension are very similar to those of the natural extension:
\begin{proposition}
\label{pro:natext_properties}
Let $\lpr:\dset\rightarrow\rset$ be a lower prevision,
with $\dset\subseteq\dlin$.
If $\LEDC$ is finite on $\dlin$, then
\begin{itemize}
\item[a)] $\LEDC(X|B)\geq\lpr(X|B)$, $\forall X|B\in\dset$.
\item[b)] $\LEDC$ is $2$-convex on $\dlin$.
\item[c)] If $\lpr^{*}$ is $2$-convex on $\dlin$ and $\lpr^{*}(X|B)\geq\lpr(X|B)$, $\forall X|B\in\dset$,
then $\lpr^{*}(X|B)\geq\LEDC(X|B)$, $\forall X|B\in\dlin$.
\item[d)] $\lpr$ is $2$-convex on $\dset$ if and only if $\LEDC=\lpr$ on $\dset$.
\item[e)] If $\lpr$ is $2$-convex on $\dset$, $\LEDC$ is its smallest $2$-convex extension on $\dlin$.
\end{itemize}
\end{proposition}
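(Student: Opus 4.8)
The plan is to mimic the classical proof for the natural extension of coherent lower previsions (or the convex natural extension in \cite[Theorem 9]{pel05}), adapting each step to the two-elementary-gain setting and using the characterisation of $2$-convexity on $\dlin$ provided by Proposition \ref{2-convexity_structure} (namely, $2$-convexity $\Leftrightarrow$ (A1) $+$ (A4)). Throughout, the standing hypothesis is that $\LEDC$ is finite on $\dlin$, so all suprema below are finite and the set $L(\cdot)$ is always non-empty on $\dlin$; Proposition \ref{pro:2-convex_natext_sufficient}(c) then tells us $L(Z|B) = \ ]-\infty, \LEDC(Z|B)[$, which is the workhorse identity.

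First I would prove (a): for $X|B\in\dset$ I must show $\LEDC(X|B)\geq\lpr(X|B)$, equivalently (by Proposition \ref{pro:2-convex_natext_sufficient}(c)) that every $\alpha<\lpr(X|B)$ lies in $L(X|B)$. Taking $X|A = X|B$ itself in the defining supremum \eqref{eq:2-convex_setL}, the gain collapses to $B(X-\lpr(X|B)) - B(X-\alpha) = B(\alpha - \lpr(X|B))$, whose supremum over $B$ is $\alpha - \lpr(X|B) < 0$; hence $\alpha\in L(X|B)$. Next, for (b), I would verify (A1) and (A4) for $\LEDC$ on $\dlin$ and invoke Proposition \ref{2-convexity_structure}. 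For (A1), given $X|B, Y|B\in\dlin$ and any $\alpha\in L(X|B)$ witnessed by some $W|A\in\dset$, I would shift the witness gain by adding and subtracting $Y$ to produce a witness for $\alpha - \sup\{X-Y|B\}$ in $L(Y|B)$ (using $-B(Y - (\alpha - \sup\{X-Y\})) = -B(X-\alpha) + B(X - Y - \sup\{X-Y\}) \le -B(X-\alpha)$), giving $\LEDC(Y|B) \ge \LEDC(X|B) - \sup\{X-Y|B\}$; rearranging yields (A1). For (A4) I must show $\LEDC(A(X-\LEDC(X|A\wedge B))|B) = 0$ for $X\in\xset$ and $A,B\in\bsetp$ with $A\wedge B\neq\varnothing$; both inequalities come from manipulating the defining supremum and the identity $A(X - \LEDC(X|A\wedge B))$ restricted appropriately, in the same fashion as the GBR step in the proof of \cite[Theorem 9]{pel05}.

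For (c), the dominance/least-committal property, I would argue directly from the definition of $L$: if $\lpr^{*}$ is $2$-convex on $\dlin$ and dominates $\lpr$ on $\dset$, then for any $\alpha\in L(X|B)$, with witness $W|A\in\dset$ satisfying $\sup\{A(W-\lpr(W|A)) - B(X-\alpha)|A\vee B\}<0$, monotonicity of the supremum together with $\lpr^{*}(W|A)\geq\lpr(W|A)$ gives $\sup\{A(W-\lpr^{*}(W|A)) - B(X-\alpha)|A\vee B\}<0$; but this is precisely the statement that the $2$-convexity inequality of Definition \ref{def:2-convexity} fails for $\lpr^{*}$ with the pair $W|A$, $X|B$ unless $\alpha \le \lpr^{*}(X|B)$ --- so $\alpha\le\lpr^{*}(X|B)$, and taking the supremum over $\alpha\in L(X|B)$ yields $\LEDC(X|B)\le\lpr^{*}(X|B)$. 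Part (d) follows by combining (a), (b) and (c): if $\lpr$ is $2$-convex on $\dset$ then $\lpr$ itself is a $2$-convex dominating-on-$\dset$ functional (after restricting/extending suitably), so $\LEDC \le \lpr$ on $\dset$ by (c), while $\LEDC\geq\lpr$ on $\dset$ by (a); conversely $\LEDC = \lpr$ on $\dset$ forces $\lpr$ to be $2$-convex there since $\LEDC$ is $2$-convex by (b) and $2$-convexity is inherited by restriction to subsets. Finally, (e) is just the conjunction of (a), (b) and (c) read as: $\LEDC$ is a $2$-convex extension, and any other $2$-convex extension dominates it.

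The main obstacle I anticipate is the verification of (A4) for $\LEDC$ in part (b): the GBR is the genuinely delicate axiom, requiring careful bookkeeping of which conditioning events appear in the witness gains and a splitting argument on $A\wedge B$ versus $\nega A\wedge B$, exactly as in \cite[Theorem 9]{pel05}. A secondary subtlety is making precise, in (c) and (d), the reduction that lets one treat $\lpr$ (or $\lpr^*$) as defined on $\dset$ while exploiting $2$-convexity on all of $\dlin$; here one must be slightly careful that $2$-convexity, being defined by a condition ranging over pairs drawn from the domain, is indeed preserved under passing to subsets, which is immediate from Definition \ref{def:2-convexity} but should be stated.
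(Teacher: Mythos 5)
Your proposal is correct and takes essentially the same route as the paper, which likewise proves a)--e) by adapting Theorem 9 of \cite{pel05} and establishes b) by verifying (A1) and (A4) and invoking Proposition \ref{2-convexity_structure}. The only point to tighten is d): you cannot literally invoke c) with $\lpr^{*}=\lpr$, since c) assumes $2$-convexity on all of $\dlin$, but your own argument for c) applies verbatim at any $X|B\in\dset$ using only $2$-convexity of $\lpr$ on $\dset$ (both gambles of the relevant pair then lie in $\dset$), which yields $\LEDC\leq\lpr$ on $\dset$; correspondingly, e) should be derived from b), c) and d) (equality on $\dset$), not merely from a).
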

\begin{proof}
Assumptions a)\textdiv e) can be proven in the same way as the corresponding
a)\textdiv e) of Theorem 9 in \cite{pel05}
(regarding properties of the convex natural extension),
with some obvious notation changes and simplifications.
In the proof of a) replace $\LEC$ in \cite{pel05} with $\LEDC$.
The proof of b) checks that $\LEDC$ satisfies axioms (A1) and (A4)
(called (D1), (D3) in \cite{pel05}),
according to Proposition \ref{2-convexity_structure}.
This is done using some special gains,
simplifying those in \cite{pel05}:
the summation of the terms with stakes $s_1,\ldots,s_m$ is replaced by a single
term with stake $s_1=1$,
in agreement with Definition \ref{def:2-convexity}.
The proofs of c), d) are analogous,
while e) follows from c) and d).
\end{proof}
In words, the $2$-convex natural extension
dominates $\lpr$ (by a)),
characterises $2$-convexity (by d))
and is the least-committal
$2$-convex extension of $\lpr$ (by b), c), e)).

\subsection{Drawbacks of $2$-convexity}
\label{sec:drawbacks_2_convexity}
Being rather weak a consistency concept,
$2$-convexity may not satisfy a number of properties which necessarily hold for coherent lower previsions.
For instance,
the \emph{positive homogeneity} axiom (A2) of Proposition~\ref{pro:conditional convex prevision on linear space},
$\lpr(\lambda X|B)=\lambda\lpr(X|B)$, with $\lambda\geq 0$, may not hold,
not even weakening it to
\begin{eqnarray}
\label{eq:weak_pos_hom}
\lpr(\lambda X|B)\geq\lambda\lpr(X|B), \forall\lambda\in [0,1].
\end{eqnarray}
(Unconditional versions of \eqref{eq:weak_pos_hom} hold for centered convex previsions.)

It can instead be shown that
\begin{proposition}
\label{pro:2-convex_internality}
If, given $\lambda\in\rset$, $\lpr$ is $2$-convex on $\dset\supseteq\{X|B, \lambda X|B\}$,
then necessarily
\begin{eqnarray}
\label{eq:2-convex_internality}
\inf\{(\lambda-1)X|B\}+\lpr(X|B)\leq\lpr(\lambda X|B)
\leq\sup\{(\lambda-1)X|B\}+\lpr(X|B).
\end{eqnarray}
\end{proposition}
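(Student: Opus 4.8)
The plan is to obtain both inequalities in \eqref{eq:2-convex_internality} by a direct application of Definition \ref{def:2-convexity}, using the two available conditional gambles $X|B$ and $\lambda X|B$ and simply swapping which of them plays the role of $X_0|B_0$ and which plays the role of $X_1|B_1$. Since both gambles are conditioned on the same event $B$, we always have $B_0\vee B_1=B$, and the gain $\LGDC$ collapses to $B$ times an affine function of $X$ plus constants, so the supremum over $B$ is easy to read off.

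For the right-hand (upper) bound I would take $X_0|B_0=X|B$ and $X_1|B_1=\lambda X|B$. Then
\[
\LGDC=B\bigl(\lambda X-\lpr(\lambda X|B)\bigr)-B\bigl(X-\lpr(X|B)\bigr),
\]
whose restriction to $B$ equals $(\lambda-1)X-\lpr(\lambda X|B)+\lpr(X|B)$. Because $\lpr(\lambda X|B)$ and $\lpr(X|B)$ are real constants, $\sup(\LGDC|B)=\sup\{(\lambda-1)X|B\}-\lpr(\lambda X|B)+\lpr(X|B)$, and $2$-convexity forces this to be $\geq 0$, i.e. $\lpr(\lambda X|B)\leq\sup\{(\lambda-1)X|B\}+\lpr(X|B)$. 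For the left-hand (lower) bound I would swap the roles, taking $X_0|B_0=\lambda X|B$ and $X_1|B_1=X|B$; now $\LGDC$ restricted to $B$ is $(1-\lambda)X-\lpr(X|B)+\lpr(\lambda X|B)$, so $2$-convexity gives $\sup\{(1-\lambda)X|B\}-\lpr(X|B)+\lpr(\lambda X|B)\geq 0$. Rewriting $\sup\{(1-\lambda)X|B\}=-\inf\{(\lambda-1)X|B\}$ and rearranging yields $\lpr(\lambda X|B)\geq\inf\{(\lambda-1)X|B\}+\lpr(X|B)$, which is the remaining inequality.

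There is essentially no obstacle here: the proof uses nothing beyond two instances of the defining inequality of $2$-convexity, and the only bookkeeping worth stating carefully is that the supremum of a gamble shifted by a constant is the supremum of the gamble plus that constant, together with $\sup(-Y)=-\inf(Y)$. If anything, the point worth emphasising afterwards is that the bounds in \eqref{eq:2-convex_internality} cannot in general be improved, precisely because these two elementary-gain configurations are the only ones $2$-convexity makes available for comparing $\lpr(\lambda X|B)$ with $\lpr(X|B)$; in particular, when $\lambda\in[0,1]$ and $X|B\geq 0$ the lower bound already shows that the weak positive homogeneity inequality \eqref{eq:weak_pos_hom} need not hold unless $\inf(X|B)\geq 0$ forces it.
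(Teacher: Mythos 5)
Your proof is correct and follows essentially the same route as the paper: both bounds come from the two possible instantiations of Definition \ref{def:2-convexity} with $\{X|B,\lambda X|B\}$ in the roles of $X_0|B_0$ and $X_1|B_1$, with the supremum conditioned on $B_0\vee B_1=B$ and the identity $\sup\{(1-\lambda)X|B\}=-\inf\{(\lambda-1)X|B\}$ used to rearrange. The only difference is cosmetic: you work out the upper bound explicitly and the paper works out the lower one, leaving the other as the ``analogous'' case.
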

\begin{proof}
To obtain the first inequality,
apply Definition \ref{def:2-convexity} with $X_1|B_1=X|B$ and $X_0|B_0=\lambda X|B$:
\begin{eqnarray*}
\label{eq:proof_bounds_2_convex_1}
\begin{array}{ll}
\sup\{B(X-\lpr(X|B))-B(\lambda X-\lpr(\lambda X|B))|B\}\geq 0 \mbox{ iff}\\
\sup\{(1-\lambda)X|B\}-\lpr(X|B)+\lpr(\lambda X|B)\geq 0 \mbox{ iff}\\
\lpr(\lambda X|B)\geq\inf\{(\lambda-1)X|B)\}+\lpr(X|B).
\end{array}
\end{eqnarray*}
The proof of the second inequality is analogous
(let $X_1|B_1=\lambda X|B$, $X_0|B_0=X|B$ in Definition \ref{def:2-convexity}).
\end{proof}
Condition \eqref{eq:2-convex_internality} seems to be rather mild,
as the next example points out.

\begin{example}
\label{ex:internality}
Given $\dset=\{X|B, 2X|B\}$ $(\lambda=2)$,
where the image of $X|B$ is $[-1,1]$ and $\lpr(X|B)=0.2$,
equation \eqref{eq:2-convex_internality} gives the bounds
$\lpr(2X|B)\in [-0.8,1.2]$.
It is easy to check that $\lpr$ is $2$-convex on $\dset$
whatever is the choice for $\lpr(2X|B)$ in the interval $[-0.8,1.2]$.
Depending on the value for $\lpr(2X|B)$ selected in this interval,
it may be $\lpr(2X|B)\gtreqless 2\lpr(X|B)$.
\end{example}
An annoying feature of $2$-convexity is that \emph{internality} may fail,
i.e. $\lpr(X|B)$ need not belong to the closed interval $[\inf(X|B),\sup(X|B)]$.
Thus, $2$-convex previsions may not satisfy a property holding even for $1$-coherent previsions.

It has to be noticed that $2$-convexity permits no complete freedom in departing from internality.
There are two issues to be emphasized with respect to this question.
The first tells us that lack of internality cannot be two-sided,
because of the following result.
\begin{proposition}
\label{pro:2-convex_inf_sup_relation}
If $\lpr:\dset\rightarrow\rset$ is $2$-convex on $\dset$ and $\lpr(Y|D)<\inf(Y|D)$ for some $Y|D\in\dset$,
then $\lpr(X|B)\leq\sup(X|B)$, $\forall X|B\in\dset$.
Similarly,  $\lpr(Y|D)>\sup(Y|D)$ for some $Y|D\in\dset$ implies
$\lpr(X|B)\geq\inf( X|B)$, $\forall X|B\in\dset$.
\end{proposition}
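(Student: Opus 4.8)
The plan is to exploit Definition \ref{def:2-convexity} directly, feeding in the gamble $Y|D$ whose value violates internality on one side together with an arbitrary $X|B\in\dset$, and reading off the consequence for $\lpr(X|B)$. I would prove the first assertion in detail and note that the second is entirely symmetric (interchange the roles of supremum/infimum, or equivalently apply the first assertion to the conjugate setting). So assume $\lpr(Y|D)<\inf(Y|D)$, i.e. $\inf(Y|D)-\lpr(Y|D)=\delta>0$, and fix an arbitrary $X|B\in\dset$.

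First I would apply Definition \ref{def:2-convexity} with $X_1|B_1=Y|D$ (the ``buying'' gamble) and $X_0|B_0=X|B$ (the ``selling'' gamble), obtaining
\begin{eqnarray*}
\sup\{D(Y-\lpr(Y|D))-B(X-\lpr(X|B))\,|\,B\vee D\}\geq 0.
\end{eqnarray*}
The key observation is that on the set $\nega{B}$ (where the term $-B(X-\lpr(X|B))$ vanishes) the gain reduces to $D(Y-\lpr(Y|D))$, and on the set $D$ we have $Y-\lpr(Y|D)\geq\inf(Y|D)-\lpr(Y|D)=\delta>0$; hence the whole gain is bounded above, on $B\vee D$, by $\max\{\text{(contribution where $B$ is true)},\ \text{(contribution where $D$ is true, $B$ false)}\}$. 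Splitting the supremum over the partition of $B\vee D$ into $B\wedge D$, $B\wedge\nega{D}$ and $\nega{B}\wedge D$, on the last two pieces the ``$-B(X-\lpr(X|B))$'' part is either absent or contributes $-(X-\lpr(X|B))$, and what I need is that the supremum cannot be realised (or even approached) there unless $\lpr(X|B)\leq\sup(X|B)$. Concretely: if $\lpr(X|B)>\sup(X|B)$, then $-(X-\lpr(X|B))\geq\lpr(X|B)-\sup(X|B)>0$ everywhere $B$ is true, while on $\nega{B}\wedge D$ the gain is just $Y-\lpr(Y|D)$ which is bounded but I must control it against the positive slack coming from $B$. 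The clean way is instead to pick the ``selling'' role carefully.

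On reflection the slicker route, which I would actually carry out, is this: suppose for contradiction that $\lpr(X|B)>\sup(X|B)$ for some $X|B$. Apply Definition \ref{def:2-convexity} with $X_1|B_1=X|B$ and $X_0|B_0=Y|D$, giving
\begin{eqnarray*}
\sup\{B(X-\lpr(X|B))-D(Y-\lpr(Y|D))\,|\,B\vee D\}\geq 0.
\end{eqnarray*}
Now on $B$ we have $X-\lpr(X|B)\leq\sup(X|B)-\lpr(X|B)=-\eta<0$, and on $D$ we have $-(Y-\lpr(Y|D))\leq-(\inf(Y|D)-\lpr(Y|D))=-\delta<0$; on $B\wedge D$ both negative terms add, on $B\wedge\nega{D}$ only the first is present ($\leq-\eta$), on $\nega{B}\wedge D$ only $-(Y-\lpr(Y|D))$ is present ($\leq-\delta$). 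Hence the supremum over $B\vee D$ is $\leq\max\{-\eta-\delta,-\eta,-\delta\}<0$, contradicting the displayed inequality. Therefore $\lpr(X|B)\leq\sup(X|B)$ for all $X|B\in\dset$, as required. The second statement follows by the symmetric argument: if $\lpr(Y|D)>\sup(Y|D)$ and, for contradiction, $\lpr(X|B)<\inf(X|B)$ for some $X|B$, apply Definition \ref{def:2-convexity} with $X_1|B_1=Y|D$, $X_0|B_0=X|B$ and run the identical sign analysis.

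The main obstacle — really the only subtlety — is the bookkeeping over the partition $B\vee D$: one must be sure that on the ``mixed'' atoms where only one of $B$, $D$ is true, the surviving single term is still strictly negative (which it is, since each of $-\eta$ and $-\delta$ is negative on its own), so that the maximum of the three cases is strictly below zero. Everything else is a direct substitution into Definition \ref{def:2-convexity} together with the elementary bounds $X-\lpr(X|B)\leq\sup(X|B)-\lpr(X|B)$ on $B$ and $Y-\lpr(Y|D)\geq\inf(Y|D)-\lpr(Y|D)$ on $D$.
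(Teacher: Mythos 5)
Your proof is correct and is essentially the paper's own argument: the paper likewise argues by contradiction, assuming $\lpr(X|B)=\sup(X|B)+\delta$ and $\lpr(Y|D)=\inf(Y|D)-\epsilon$, forms the same gain $B(X-\lpr(X|B))-D(Y-\lpr(Y|D))$, and shows its supremum over the three atoms $B\wedge D$, $B\wedge\nega{D}$, $\nega{B}\wedge D$ is strictly negative, contradicting $2$-convexity. The only cosmetic difference is that the paper states the joint impossibility (no pair with $\lpr(X|B)>\sup(X|B)$ and $\lpr(Y|D)<\inf(Y|D)$), which covers both assertions at once, whereas you treat the second assertion by the symmetric argument.
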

\begin{proof}
We equivalently prove that there are no $X|B,Y|D\in\dset$
such that $\lpr(X|B)\\>\sup(X|B)$ and $\lpr(Y|D)<\inf(Y|D)$.

By contradiction, take $\delta,\epsilon>0$ and suppose
\begin{eqnarray}
\label{eq:proof_2_convex_bounds}
\lpr(X|B)=\sup(X|B)+\delta, \lpr(Y|D)=\inf(Y|D)-\epsilon.
\end{eqnarray}
Then,
$\LGDC|B\vee D=B(X-(\sup(X|B)+\delta))-D(Y-(\inf(Y|D)-\epsilon))|B\vee D$
is such that $\sup(\LGDC|B\vee D)<0$.
In fact, $\sup(\LGDC|B\vee D)=\max\{\sup(\LGDC|\nega{B}\wedge D),\sup(\LGDC|B\wedge\nega{D}),\sup(\LGDC|B\wedge D)\}$
and we have:
\begin{itemize}
\item $\sup(\LGDC|\nega{B}\wedge D)=\sup(-D(Y-\inf(Y|D)+\epsilon)|\nega{B}\wedge D)=
\sup(-Y|\nega{B}\wedge D)+\inf(Y|D)-\epsilon=\inf(Y|D)-\inf(Y|\nega{B}\wedge D)-\epsilon\leq -\epsilon<0$;
\item $\sup(\LGDC|B\wedge\nega{D})=\sup(X|B\wedge\nega{D})-\sup(X|B)-\delta\leq -\delta<0$;
\item $\sup(\LGDC|B\wedge D)=\sup(X|B\wedge D)-\sup(X|B)-\delta+\inf(Y|D)-\inf(Y|B\wedge D)-\epsilon\leq -\delta-\epsilon<0$.
\end{itemize}
Therefore,
any $\lpr$ satisfying \eqref{eq:proof_2_convex_bounds} is not $2$-convex,
according to Definition~\ref{def:2-convexity}.
\end{proof}
The second issue is that $2$-convexity imposes a sort of, so to say, two-component internality.
To see this,
note that
\begin{lemma}
\label{lem:2-convex_inf_sup_bounds}
If $\lpr:\dset\rightarrow\rset$ is $2$-convex on $\dset$,
and $X|B$, $Y|B\in\dset$,
then
\begin{eqnarray}
\label{eq:2-convex_inf_sup_bounds}
\begin{array}{ll}
\inf\{X-Y|B\}\leq\lpr(X|B)-\lpr(Y|B)
\leq\sup\{X-Y|B\}.
\end{array}
\end{eqnarray}
\end{lemma}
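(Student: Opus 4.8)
The plan is to apply the definition of $2$-convexity (Definition~\ref{def:2-convexity}) twice, once with the two conditional gambles in one order and once with them swapped, exactly as was done in the proof of Proposition~\ref{pro:2-convex_internality}, but now keeping both events equal to $B$. For the right-hand inequality, set $X_1|B_1 = Y|B$ and $X_0|B_0 = X|B$ in Definition~\ref{def:2-convexity}. Then $\LGDC = B(Y - \lpr(Y|B)) - B(X - \lpr(X|B))$, and since $B_0 \vee B_1 = B$, the $2$-convexity requirement reads $\sup(\LGDC | B) \geq 0$. Because on $B$ the indicators $B$ are identically $1$, this supremum equals $\sup\{(Y - X)|B\} - \lpr(Y|B) + \lpr(X|B)$. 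Hence $\lpr(X|B) - \lpr(Y|B) \geq -\sup\{(Y-X)|B\} = \inf\{(X-Y)|B\}$, which is the left inequality of \eqref{eq:2-convex_inf_sup_bounds}.

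For the other bound, swap the roles: take $X_1|B_1 = X|B$ and $X_0|B_0 = Y|B$. Now $\LGDC = B(X - \lpr(X|B)) - B(Y - \lpr(Y|B))$, and $\sup(\LGDC|B) \geq 0$ gives $\sup\{(X-Y)|B\} - \lpr(X|B) + \lpr(Y|B) \geq 0$, i.e. $\lpr(X|B) - \lpr(Y|B) \leq \sup\{(X-Y)|B\}$. Combining the two displays yields \eqref{eq:2-convex_inf_sup_bounds}. The only facts used are: that $B_0 \vee B_1 = B$ when both conditioning events are $B$; that the supremum of a gamble conditional on $B$ depends only on its restriction to $B$, where $B$ acts as the constant $1$ (as recalled in Section~\ref{sec:preliminaries}); and the elementary identity $\sup\{-W|B\} = -\inf\{W|B\}$.

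I do not expect any genuine obstacle here: the statement is a direct specialisation of the $2$-convexity inequality to the case of a common conditioning event, and the argument is essentially a transcription of the computation already carried out in Proposition~\ref{pro:2-convex_internality} (which is the case $Y = X$, $\lambda$ scaling). The one point worth stating carefully is the reduction $\sup(\LGDC|B) = \sup\{(X-Y)|B\} - \lpr(X|B) + \lpr(Y|B)$, i.e. that conditioning on $B$ lets us drop the indicator factors and treat the constants $\lpr(\cdot|B)$ as genuine constants; this is immediate from the conventions on $\sup(\,\cdot\,|B)$ fixed in the preliminaries, so the proof is short.
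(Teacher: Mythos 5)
Your proof is correct and is essentially the argument the paper uses: the paper notes the second inequality is just axiom (A1) (a necessary condition for $2$-convexity, established by exactly the gain computation you write out) and obtains the first by swapping $X$ and $Y$ via $\inf\{X-Y|B\}=-\sup\{Y-X|B\}$, which is the same pair of applications of Definition~\ref{def:2-convexity} you perform explicitly. The only blemish is a labelling slip: your first computation (announced as the ``right-hand inequality'') in fact yields the left-hand bound, but the mathematics of both derivations is correct and complete.
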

\begin{proof}
The second inequality in \eqref{eq:2-convex_inf_sup_bounds} is axiom (A1),
a necessary condition for $2$-convexity
which implies also the first inequality.
In fact,
$\inf\{X-Y|B\}=-\sup\{Y-X|B\}\leq -(\lpr(Y|B)-\lpr(X|B))=\lpr(X|B)-\lpr(Y|B)$.
\end{proof}
Recall now that $\lpr(X|B)$ is interpreted as a supremum buying price for $X|B$,
and that Definition \ref{def:2-convexity} ensures that buying $X|B$ for $\lpr(X|B)$
and selling $Y|B$ at its supremum buying price $\lpr(Y|B)$ would be (marginally)
acceptable for $2$-convexity.
Then,
equation \eqref{eq:2-convex_inf_sup_bounds} tells us that the profit $\lpr(X|B)-\lpr(Y|B)$
from this two-component exchange ($X|B$ vs. $Y|B$) guarantees no arbitrage.
For instance,
it cannot exceed the income upper bound $\sup\{X-Y|B\}$.

As a further questionable feature of $2$-convexity,
the Goodman-Nguyen relation may not induce an agreeing
ordering on a $2$-convex prevision.
This is tantamount to saying that the partial ordering of some $2$-convex conditional
previsions may conflict with the ordering of the extended implication (inclusion) relation $\leq_{GN}$.

For instance,
from \eqref{eq:Goodman_Nguyen_events},
if $B\Rightarrow C$ then $0|C\leq_{GN} 0|B$.
Agreement with the Goodman-Nguyen relation requires
$\lpr(0|C)\leq\lpr(0|B)$ to hold,
but it can be proven that if $\lpr(0|B)<0$ and $B\Rightarrow C$,
then $2$-convexity asks instead that $\lpr(0|C)\geq\lpr(0|B)$
(the inequality may be strict).

\subsection{Centered $2$-convex lower previsions}
\label{sec:centered_2-convex}
The critical issues of $2$-convexity discussed in the preceding section
can be solved or softened requiring the additional property
\begin{eqnarray*}
\label{eq:centering_condition}
\forall X|B\in\dset, 0|B\in\dset \mbox{ and } \lpr(0|B)=0,
\end{eqnarray*}
i.e. restricting our attention to centered $2$-convex
conditional lower previsions.
This is shown in the following proposition.
\begin{proposition}
\label{pro:C-convex_properties}
Let $\lpr:\dset\rightarrow\rset$ be a centered $2$-convex lower prevision on $\dset$.
Then,
\begin{itemize}
\item[a)] $\forall X|B\in\dset$, $\lpr(X|B)\in [\inf (X|B), \sup (X|B)]$.
\item[b)] $\lpr$ has a finite $2$-convex natural extension $\LEDC$ on any superset of $\dset$.
\item[c)] $X|B\leq_{GN} Y|D$ implies $\lpr(X|B)\leq\lpr(Y|D)$.
\end{itemize}
\end{proposition}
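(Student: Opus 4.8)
The three parts can each be reduced to results already available in the excerpt, so the strategy is to instantiate those results under the centering hypothesis $0|B\in\dset$ and $\lpr(0|B)=0$ for every $X|B\in\dset$. For part a), the plan is to invoke Proposition \ref{pro:2-convex_natext_sufficient}e): centering gives $\lpr(0|B)=0\leq 0$, hence $\lpr(X|B)\leq\sup(X|B)$ for all $X|B\in\dset$. The lower bound $\lpr(X|B)\geq\inf(X|B)$ is obtained symmetrically: apply $2$-convexity (Definition \ref{def:2-convexity}) with $X_1|B_1=0|B$ and $X_0|B_0=X|B$, so that $\sup\{B(0-\lpr(0|B))-B(X-\lpr(X|B))\,|\,B\}=\sup\{-X|B\}+\lpr(X|B)=\lpr(X|B)-\inf(X|B)\geq 0$. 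Together these give internality.

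For part b), the plan is to combine parts a) and b) of Proposition \ref{pro:2-convex_natext_sufficient}. Given any superset $\dset^{*}\supseteq\dset$ and any $Z|B\in\dset^{*}$, one wants $L(Z|B)\neq\emptyset$ and $\LEDC(Z|B)<+\infty$. For non-emptiness: if $\dset$ already contains some $0|C$ with $C\Rightarrow B$, use Proposition \ref{pro:2-convex_natext_sufficient}a) directly; in general one first extends $\lpr$ to $\dset\cup\{0|B\}$ with $\lpr(0|B)=0$ — which keeps it (centered) $2$-convex by Proposition \ref{pro:2-convex_natext_sufficient}b) — and then $0|B$ itself plays the role of $Y|C$ with $C=B\Rightarrow B$, so $L(Z|B)\neq\emptyset$. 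For finiteness from above: centering gives $\sup(X|A)\geq\lpr(X|A)$ for all $X|A$ (part a)), so Proposition \ref{pro:2-convex_natext_sufficient}d) yields $\LEDC(Z|B)\leq\sup(Z|B)<+\infty$. Hence $\LEDC$ is finite everywhere on $\dset^{*}$. One should note that finiteness of $\LEDC$ on the superset is exactly the hypothesis needed to invoke Proposition \ref{pro:natext_properties}, confirming $\LEDC$ is itself $2$-convex and least-committal — though part b) as stated only claims finiteness.

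For part c), the cleanest route is to reduce to Lemma \ref{lem:2-convex_inf_sup_bounds} together with the definition of $\leq_{GN}$ on conditional gambles recalled in Section \ref{sec:preliminaries}. Unwinding $X|B\leq_{GN} Y|D$ gives $I_BX+I_{\nega B\vee D}\sup(X|B)\leq I_DY+I_{B\vee\nega D}\inf(Y|D)$. The plan is to use $2$-convexity with the gain built from buying $Y|D$ and selling $X|B$, namely $\LGDC=D(Y-\lpr(Y|D))-B(X-\lpr(X|B))$ conditioned on $B\vee D$, and bound $\sup(\LGDC|B\vee D)$ from above case-by-case on the atoms of $\{B\wedge D,\ B\wedge\nega D,\ \nega B\wedge D\}$, as in the proof of Proposition \ref{pro:2-convex_inf_sup_relation}. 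Using internality (part a)) to control $\lpr(X|B)$ and $\lpr(Y|D)$ by their sup/inf, and the Goodman--Nguyen inequalities to control the gamble values on each atom, one should get that $\sup(\LGDC|B\vee D)\geq 0$ forces $\lpr(Y|D)\geq\lpr(X|B)$. The main obstacle is the bookkeeping in part c): one must check all three atoms and see that the Goodman--Nguyen conditions ($A\wedge B\Rightarrow C\wedge D$-type inclusions, here in their gamble form) supply exactly the inequalities needed on each piece, with centering ensuring internality so that the prices do not spoil the bound. This is the same type of argument underlying \cite[Proposition 10]{pel14}, restricted to the two-gamble gains permitted by $2$-convexity, so it should go through with the weaker hypothesis.
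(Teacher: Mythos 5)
Your parts a) and b) are correct and essentially coincide with the paper's own argument: a) is the specialisation $Y|B=0|B$, $\lpr(0|B)=0$ of the two-sided bound in Lemma \ref{lem:2-convex_inf_sup_bounds} (your split into Proposition \ref{pro:2-convex_natext_sufficient}e) for the upper bound and a direct gain computation for the lower bound is the same computation), and b) is exactly the intended combination of parts a), b), d), e) of Proposition \ref{pro:2-convex_natext_sufficient}, including the device of adjoining $0|B$ with value $0$ when $B$ is a new conditioning event.

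Part c) contains a genuine flaw. The paper settles c) by citing \cite[Proposition 10]{pel14}; you instead sketch a direct argument, which is a viable route, but the gain you pick is the wrong one. With $\LGDC=D(Y-\lpr(Y|D))-B(X-\lpr(X|B))$ (buy $Y|D$, sell $X|B$) the Goodman--Nguyen hypothesis bounds the gain from \emph{below}, not above: on $B\wedge D$ it gives $X+\sup(X|B)\leq Y+\inf(Y|D)$, i.e. $Y-X\geq\sup(X|B)-\inf(Y|D)$, and similarly on the other atoms, so $\sup(\LGDC|B\vee D)\geq 0$ holds irrespective of the sign of $\lpr(Y|D)-\lpr(X|B)$ and yields no conclusion. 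The correct choice is the opposite gain $\LGDC^{\prime}=B(X-\lpr(X|B))-D(Y-\lpr(Y|D))$ (buy $X|B$, sell $Y|D$). Writing $t=\lpr(Y|D)-\lpr(X|B)$ and using internality from part a), the GN inequalities give, on $B\wedge D$, $\LGDC^{\prime}=(X-Y)+t\leq(\inf(Y|D)-\sup(X|B))+t\leq 2t$; on $B\wedge\nega{D}$, $\LGDC^{\prime}=X-\lpr(X|B)\leq\inf(Y|D)-\lpr(X|B)\leq t$; on $\nega{B}\wedge D$, $\LGDC^{\prime}=\lpr(Y|D)-Y\leq\lpr(Y|D)-\sup(X|B)\leq t$. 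Hence $0\leq\sup(\LGDC^{\prime}|B\vee D)\leq\max\{2t,t\}$, which forces $t\geq 0$, i.e. $\lpr(X|B)\leq\lpr(Y|D)$. With this correction (and the atomwise bookkeeping made explicit as above) your plan for c) becomes a self-contained proof, and in that respect more informative than the paper's citation of \cite{pel14}; as written, however, the key step would fail.
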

\begin{proof}
\emph{Proof of a).}
Put $Y|B=0|B$ and $\lpr(0|B)=0$ in \eqref{eq:2-convex_inf_sup_bounds}.

\emph{Proof of b).}
The statement follows from Proposition \ref{pro:2-convex_natext_sufficient}.

\emph{Proof of c).}
Proven in \cite[Proposition 10]{pel14} for C-convex previsions.
As noted in the Discussion following Proposition 10 in \cite{pel14},
the very same proof applies to centered $2$-convex previsions too
(cf. also Footnote \ref{foo:1_become_2}).
\end{proof}
\textit{Comment.}
The condition $\lpr(0|B)=0$ seems to be obvious,
and in fact guarantees more satisfactory properties to $2$-convexity.
In our view,
the main reason for considering the alternative $\lpr(0|B)\neq 0$ is to encompass additional uncertainty models.
This is patent already in the unconditional framework:
convex risk measures, as introduced in \cite{fol02,fol02bis},
correspond to convex, not necessarily centered previsions
\cite{pel03}.

Note that centered $2$-convexity implies $1$-coherence,
by Proposition \ref{pro:C-convex_properties} a),
while being obviously implied by $2$-coherence.
Hence,
the centering condition $\lpr(0|B)=0$ may be regarded as a technical instrument to guarantee that
the lower prevision $\lpr$ ensures more satisfactory properties than a generic $2$-convex prevision,
without having to assume $2$-coherence.

\section{$2$-coherent lower previsions}
\label{sec:2_coherent}
Our next step is a discussion of which additional properties are achieved by a $2$-coherent lower prevision.
\begin{definition}
\label{def:cond_2-coherence}
$\lpr:\dset\rightarrow\rset$ is a $2$-coherent lower prevision on $\dset$ iff
$\forall X_0|B_0$, $X_1|B_1\in\dset$, $\forall s_1\geq 0$, $\forall s_0\in\rset$,
defining $\Ss=\bigvee\{B_{i}:s_{i}\neq 0, i=0,1\}$,
$\LGD=s_{1}B_{1}(X_{1}-\LP(X_{1}|B_{1}))-s_{0}B_{0}(X_{0}-\LP(X_{0}|B_{0}))$
we have that, whenever $\Ss\neq\varnothing$,
\begin{eqnarray}
\label{eq:cond_2-coherence}
\sup\{\LGD|\Ss\}\geq 0.
\end{eqnarray}
\end{definition}
$2$-coherent lower previsions are characterized on $\dlin$ as follows:
\begin{proposition}
\label{pro:conditional 2-convex prevision on linear space}
Let $\LP:\dlin\rightarrow\rset$ be a conditional lower prevision.
$\lpr$ is $2$-coherent on $\dlin$ if and only if
(A1), (A2), (A4) and the following axiom hold:
\begin{itemize}
\item[(A6)] $\lpr(\lambda X|B)\leq \lambda\lpr(X|B)$, $\forall\lambda<0$.
\end{itemize}
\end{proposition}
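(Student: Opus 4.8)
The plan is to prove both directions by relating the $2$-coherence inequality \eqref{eq:cond_2-coherence} to the already-established characterisation of $2$-convexity (Proposition~\ref{2-convexity_structure}) plus the homogeneity-type behaviour on negative and non-negative scalars. The key observation is that $2$-coherence is exactly $2$-convexity \emph{together with} the freedom to rescale the two elementary gains by arbitrary $s_1\geq 0$ and $s_0\in\rset$; so morally (A1)+(A4) should survive from Proposition~\ref{2-convexity_structure}, while the rescaling freedom should force positive homogeneity (A2) and its negative-scalar companion (A6). First I would record that $2$-coherence trivially implies $2$-convexity (take $s_0=s_1=1$ in Definition~\ref{def:cond_2-coherence}), hence (A1) and (A4) hold by Proposition~\ref{2-convexity_structure}.

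For the remaining necessity of (A2) and (A6): I would apply Definition~\ref{def:cond_2-coherence} with $X_1|B_1=X_0|B_0=X|B$ but with the two gains scaled differently. Concretely, to get $\lpr(\lambda X|B)\le\lambda\lpr(X|B)$ for $\lambda\ge 0$, take $X_1=\lambda X$, $X_0=X$ and choose stakes $s_1=1$, $s_0=\lambda$ (legitimate since $\lambda\ge 0$), producing the gain $B(\lambda X-\lpr(\lambda X|B))-\lambda B(X-\lpr(X|B))=B(\lambda\lpr(X|B)-\lpr(\lambda X|B))$, a constant on $B$; non-negativity of its supremum gives one inequality, and swapping the roles of the two gambles gives the reverse, yielding (A2). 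For $\lambda<0$: here I can no longer put $s_0=\lambda<0$ in the role requiring $s_1\ge 0$, but I \emph{can} use $s_0<0$ since Definition~\ref{def:cond_2-coherence} allows $s_0\in\rset$; taking $X_1=\lambda X$, $X_0=X$, $s_1=1$, $s_0=\lambda<0$ again makes the gain a constant $B(\lambda\lpr(X|B)-\lpr(\lambda X|B))$, and its supremum being $\ge 0$ gives precisely (A6). (One should double-check the $S(\unds)$ bookkeeping — when $\lambda=0$ the gamble $\lambda X|B=0|B$ and the relevant $B_i$ drops out, but the inequality then reads $\lpr(0|B)\le 0$, which is consistent and in fact derivable from (A1) applied to $X|B$ and $X|B$, so no separate case is needed.)

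For sufficiency, assume (A1), (A2), (A4), (A6) and fix $X_0|B_0,X_1|B_1\in\dlin$, $s_1\ge 0$, $s_0\in\rset$ with $\Ss\neq\varnothing$. The idea is to absorb the stakes into the gambles: write $s_1 B_1(X_1-\lpr(X_1|B_1))=B_1(s_1X_1-s_1\lpr(X_1|B_1))=B_1(s_1X_1-\lpr(s_1X_1|B_1))$ using (A2) (if $s_1=0$ this term vanishes and the corresponding $B_1$ is absent from $\Ss$), and likewise $-s_0B_0(X_0-\lpr(X_0|B_0))=B_0(s_0X_0-\lpr(s_0X_0|B_0))\cdot(-1)$ after distinguishing $s_0\ge 0$ (use (A2)) from $s_0<0$ (use (A6) to write $s_0\lpr(X_0|B_0)=\lpr(s_0X_0|B_0)$ — wait, (A6) only gives $\lpr(s_0X_0|B_0)\le s_0\lpr(X_0|B_0)$, so here I must be careful). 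This is the step I expect to be the main obstacle: with only the inequality (A6) rather than an equality for negative scalars, one cannot simply rewrite the $X_0$-term as an elementary gain at a rescaled gamble. The fix is to handle the sign of $s_0$ by cases — if $s_0\ge 0$ the argument reduces cleanly to the $2$-convexity inequality of Proposition~\ref{2-convexity_structure} applied to $s_1X_1|B_1$ and $s_0X_0|B_0$; if $s_0<0$ then $-s_0B_0(X_0-\lpr(X_0|B_0))=B_0(-s_0)(X_0-\lpr(X_0|B_0))$ with $-s_0>0$, so both terms now have non-negative multipliers and the gain $\LGD$ is of the form $B_1(X_1'-\lpr(X_1'|B_1))+B_0(X_0'-\lpr(X_0'|B_0))$ (a sum, not a difference) with $X_1'=s_1X_1$, $X_0'=-s_0X_0$; this is an ``avoiding-uniform-loss''-type gain and its supremum over $\Ss=B_0\vee B_1$ is $\ge 0$ by a short direct argument — on the sub-event where $B_1$ is true the first bracket attains values with supremum $\sup(X_1'|B_1)-\lpr(X_1'|B_1)\ge 0$ by (A1) (taking $Y=0$ there, using that constants are in $\xset$), and one combines the pieces. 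I would finish by assembling these cases; the routine verification that $\Ss$ correctly matches $B_0\vee B_1$ in each case is left to the reader.
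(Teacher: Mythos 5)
Your necessity argument is sound and essentially the paper's: $2$-coherence with $s_0=s_1=1$ gives $2$-convexity, hence (A1) and (A4) via Proposition \ref{2-convexity_structure}, and your stake choices $s_1=1$, $s_0=\lambda$ (and the swapped pair) are just a rescaled variant of the paper's choice $s_0=\frac{1}{\lambda}$, giving (A2) for $\lambda\geq 0$ and (A6) for $\lambda<0$. The only slip there is the parenthetical claim that $\lpr(0|B)\leq 0$ follows from (A1) applied to $X|B$ and $X|B$ (that instance of (A1) is just $0\leq 0$); it is harmless, since your own construction with $s_0=0$, respectively $s_1=0$, already yields $\lpr(0|B)\leq 0$ and $\lpr(0|B)\geq 0$.

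The sufficiency direction, however, has a genuine gap exactly where you flagged trouble, namely $s_0<0$. After rewriting $\LGD=B_1(X_1'-\lpr(X_1'|B_1))+B_0(X_0'-\lpr(X_0'|B_0))$ with $X_1'=s_1X_1$, $X_0'=-s_0X_0$ (legitimate by (A2)), your ``short direct argument'' only records that each bracket has non-negative supremum over its own conditioning event and then ``combines the pieces''. Those two facts do not bound the supremum of the sum from below: where one bracket is near its supremum the other may be arbitrarily negative; already for $B_0=B_1$, brackets taking values $(0,-2)$ and $(-2,0)$ each have supremum $0$ while their sum has supremum $-2$. Tellingly, your treatment of this case uses neither (A6) nor (A4), yet both are indispensable: for $B_0=B_1$ one needs (A1) together with (A6), via $\lpr(Y_1|B)+\lpr(Y_0|B)\leq\lpr(Y_1|B)-\lpr(-Y_0|B)\leq\sup(Y_1+Y_0|B)$, and for $B_0\neq B_1$ only (A4) can relate previsions conditional on different events. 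The paper's proof handles both signs of $s_0$ at once by the chain $\sup\{\LGD|\Ss\}\geq\lpr(s_1B_1(X_1-\lpr(X_1|B_1))|\Ss)-\lpr(s_0B_0(X_0-\lpr(X_0|B_0))|\Ss)\geq s_1\lpr(B_1(X_1-\lpr(X_1|B_1))|\Ss)-s_0\lpr(B_0(X_0-\lpr(X_0|B_0))|\Ss)=0$, using (A1) at the first step, (A2) (plus (A6) when $s_0<0$) at the second, and (A4) at the last; replacing your $s_0<0$ case by this argument also absorbs your $s_0\geq 0$ reduction and the $\Ss$-bookkeeping for $s_1=0$ or $s_0=0$ that you left to the reader.
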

\begin{proof}
We prove first that if (A1), (A2), (A4) and (A6) hold,
then $\lpr$ is $2$-coherent on $\dlin$.
Recalling for this Definition \ref{def:cond_2-coherence},
take any two $X_0|B_0, X_1|B_1\in\dlin$,
and any $s_1\geq 0$, $s_0\in\rset$.
Then,
using (A1) at the first inequality,
(A2) (when $s_0\geq 0$) or (A2) and (A6) (when $s_0<0$) at the second inequality,
we obtain:
\begin{eqnarray*}
\label{eq:cond_2-coherence_proof}
\begin{array}{lll}
\sup\{[s_1 B_1(X_1-\lpr(X_1|B_1))]-[s_0 B_0(X_0-\lpr(X_0|B_0))]|\Ss\}\geq \\
\lpr(s_1 B_1(X_1-\lpr(X_1|B_1))|\Ss)-\lpr(s_0 B_0(X_0-\lpr(X_0|B_0))|\Ss)\geq \\
s_1 \lpr(B_1(X_1-\lpr(X_1|B_1))|\Ss)-s_0 \lpr(B_0(X_0-\lpr(X_0|B_0))|\Ss)=0,
\end{array}
\end{eqnarray*}
where the equality holds because,
when $s_i\neq 0$,
$s_i \lpr(B_i (X_i-\lpr(X_i|B_i))|\Ss)=s_i \lpr(B_i (X_i-\lpr(X_i|B_i \wedge\Ss))|\Ss)=0\ (i=1,2)$ by (A4).

Conversely, if $\lpr$ is $2$-coherent, therefore also $2$-convex, on $\dlin$,
(A1) and (A4) hold by Proposition \ref{2-convexity_structure}.
Hence, it only remains to prove (A2) and (A6).

We prove first (A6).
Apply Definition \ref{def:cond_2-coherence},
with $X_1|B_1=X|B$, $X_0|B_0=\lambda X|B$,
$s_1=1, s_0=\frac{1}{\lambda}<0$:
$\sup(B(X-\lpr(X|B))-\frac{1}{\lambda}B(\lambda X-\lpr(\lambda X|B))|B)=
\sup(-\lpr(X|B)\\+\frac{1}{\lambda}\lpr(\lambda X|B))\geq 0$,
which is equivalent to $\lpr(\lambda X|B)\leq\lambda\lpr(X|B)$.

As for (A2),
consider the same assumptions of the proof of (A6).
Since now $s_0=\frac{1}{\lambda}>0$,
we obtain the inequality $\lpr(\lambda X|B)\geq\lambda\lpr(X|B)$.
Assuming instead $X_1|B_1=\lambda X|B$, $X_0|B_0=X|B$, $s_1=\frac{1}{\lambda}$,
$s_0=1$,
we obtain the reverse inequality $\lpr(\lambda X|B)\leq\lambda\lpr(X|B)$.
\end{proof}
\textit{Comment} A comparison of Propositions \ref{2-convexity_structure} and \ref{pro:conditional 2-convex
prevision on linear space} is useful for detecting two major
differences between (centered) $2$-convex and $2$-coherent
previsions.

One is positive homogeneity (axiom (A2)),
a condition which, on any set $\dset$, is necessary for $2$-coherence, but not for $2$-convexity.
The need for positive homogeneity depends on the specific model we wish to consider.
We might be willing to reject it in some instance,
typically because of \emph{liquidity risk} considerations.
Basically, this means that for a large positive $\lambda$ difficulties might be encountered at exchanging
$\lambda X|B$ at a price $\lpr(\lambda X|B)=\lambda\lpr(X|B)$,
because of lack of market liquidity at some degree.

The second difference is pointed out by axiom (A6).
To fix its meaning,
recall that given $\lpr(X|B)$,
its \emph{conjugate} upper prevision $\upr(X|B)$ is defined by
\begin{eqnarray}
\label{eq:conjugate}
\upr{(X|B)}=-\lpr(-X|B).
\end{eqnarray}
Hence, 
axiom (A6) ensures by \eqref{eq:conjugate} that
\begin{eqnarray*}
\label{eq:dominance}
\upr(X|B)\geq\lpr(X|B), \forall X|B\in\dlin.
\end{eqnarray*}

Therefore, $2$-coherence is preferable to $2$-convexity whenever we fix an upper ($\upr$) and a lower ($\lpr$) bound for the
uncertainty evaluation of $X|B$,
while keeping positive homogeneity.


\subsection{$2$-coherence versus $n$-coherence}
\label{sec:2_n_coherence}
Compare Propositions \ref{pro:conditional 2-convex prevision on linear space} and 1, a).
Recalling that (A6) is a necessary condition for $2$-coherence and hence also for coherence,
only the superlinearity axiom (A3) distinguishes $2$-coherence and coherence on $\dlin$.
From this,
deductions on the role of $n$-coherence, $n\geq 3$,
can be made which are quite analogue to those on $n$-convexity in Section \ref{sec:2-convex_previsions}.
This time,
it can be shown that any $n$-coherent lower prevision, $n\geq 3$,
must satisfy (A3),
and hence that:
\begin{proposition}
\label{pro:2_n_coherence}
On $\dlin$, $n$-coherence with $n\geq 3$ and coherence are equivalent concepts.
\end{proposition}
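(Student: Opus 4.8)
The plan is to mirror the argument that gave Proposition~\ref{pro:n_equivalence}, now invoking the axiomatic characterisations of coherence and of $2$-coherence on $\dlin$. One inclusion is immediate: if $\lpr$ is coherent on $\dlin$, then $\sup\{\LG|\Ss\}\geq 0$ holds for every gain $\LG$ built from finitely many elementary gains, hence \emph{a fortiori} when at most $n$ of them are used; so coherence implies $n$-coherence for every $n$. Only the converse needs an argument.

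Suppose $\lpr$ is $n$-coherent on $\dlin$ with $n\geq 3$. Restricting attention to gains that use at most two elementary gains shows that $\lpr$ is in particular $2$-coherent, so by Proposition~\ref{pro:conditional 2-convex prevision on linear space} it satisfies (A1), (A2), (A4) and (A6). By Proposition~\ref{pro:conditional convex prevision on linear space}~a) it therefore only remains to derive the superlinearity axiom (A3), after which (A1)--(A4) all hold and coherence on $\dlin$ follows.

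To obtain (A3), fix $X|B,Y|B\in\dlin$. Since $\xset$ is a linear space, $X+Y\in\xset$, hence $X+Y|B\in\dlin$. Consider the gain obtained by buying $X|B$ and $Y|B$ and selling $X+Y|B$, each with unit stake and all conditional on $B$:
\[
\LG=B\big(X-\lpr(X|B)\big)+B\big(Y-\lpr(Y|B)\big)-B\big((X+Y)-\lpr(X+Y|B)\big).
\]
This gain comprises exactly three elementary gains — two of buying type and one of selling type, which in Definition~\ref{def:coh_conv_prev}~a) corresponds to taking $m=2$ and $s_0=s_1=s_2=1$ — so it is admissible in the $n$-coherent betting scheme because $n\geq 3$, and here $\Ss=B$. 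A direct simplification collapses $\LG$ to the constant gamble $B\big(\lpr(X+Y|B)-\lpr(X|B)-\lpr(Y|B)\big)$, so $n$-coherence forces $\sup\{\LG|B\}=\lpr(X+Y|B)-\lpr(X|B)-\lpr(Y|B)\geq 0$, which is precisely (A3). Together with (A1), (A2), (A4), Proposition~\ref{pro:conditional convex prevision on linear space}~a) then yields that $\lpr$ is coherent on $\dlin$.

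The only point requiring any care is the bookkeeping that the gain used to force (A3) genuinely fits the $n$-coherent scheme for $n=3$: that it involves no more than three elementary gains and that the stakes (two non-negative buying stakes and one selling term) have the permitted signs. Once this is verified the algebra is trivial, since the gain reduces to a constant. In other words, the proposition is the observation that (A3), exactly like (A5) in the convex case of Proposition~\ref{pro:n_equivalence}, is already forced by threefold consistency, so nothing beyond $3$-coherence is ever extracted from full coherence on $\dlin$.
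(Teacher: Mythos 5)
Your proof is correct and follows essentially the same route as the paper: the paper also observes that $n$-coherence ($n\geq 3$) yields $2$-coherence, hence (A1), (A2), (A4) via Proposition~\ref{pro:conditional 2-convex prevision on linear space}, and that a gain with only three elementary terms already forces (A3), so coherence follows from Proposition~\ref{pro:conditional convex prevision on linear space}~a). Your explicit bet (buy $X|B$ and $Y|B$, sell $X+Y|B$, unit stakes) simply spells out the step the paper leaves to the analogy with the $3$-convex case, and the bookkeeping you check is exactly right.
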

And again,
we may in general argue that $n$-coherence has no special relevance, compared to coherence, when $n\geq 3$.
In particular,
$n$-coherent extensions of an $n$-coherent $\lpr$ exist on sufficiently large sets if and only if $\lpr$ is coherent.

The latter concept is illustrated in the next example,
elaborating on Example 2.7.6 in \cite{wal91}.
\begin{example}
\label{ex:non-extension}
Let $\prt=\{a,b,c,d\}$ be a partition of the sure event $\Omega$.
Define $\lpr$ on the powerset of $\prt$ as follows:
\begin{itemize}
\item $\lpr(\Omega)=1$
\item $\lpr(E)=\frac{1}{2}$ if $E$ is made up of $2$ or $3$ elements of $\prt$,
one of which is $a$.
\item $\lpr(E)=0$ otherwise.
\end{itemize}
It is shown in \cite{wal91} that $\lpr$ is not coherent,
while being $3$-coherent,
and hence also $3$-convex.
We show now that \emph{$\lpr$ has no $3$-convex extension to
the linear space $\mathcal{L}(\prt)$ of all gambles defined on $\prt$}.

In fact, suppose a $3$-convex extension, also termed $\lpr$, exists,
and define
$A=a$, $B=a\vee b$, $C=a\vee c$, $D=a\vee d$.
Note that, by applying (A1) with
$X=\frac{1}{2}A$, $Y=A$ and $B=\Omega$,
we get $\lpr(\frac{1}{2}A)\leq\lpr(A)+\sup(-\frac{1}{2}A)=\lpr(A)=0$.
Therefore, also the $3$-convex extension of $\lpr$ to
$\frac{1}{4}(B+C+D-1)=\frac{1}{2}A$ should be non-positive.
Note also that $\lpr(-1)=-1$
(use (A1) with $X=0$, $Y=-1$, $B=\Omega$, to
get $\lpr(-1)\geq -1$,
which is what is needed next;
interchanging $X$ and $Y$ in (A1) gives also $\lpr(-1)\leq -1$).
By applying axiom (A5) as a necessary condition for $3$-convexity,
we obtain
$\lpr(\frac{1}{4}(B+C+D-1))=
\lpr(\frac{1}{2}(\frac{1}{2}B+\frac{1}{2}C)+\frac{1}{2}(\frac{1}{2}D-\frac{1}{2}))
\geq\frac{1}{2}\lpr(\frac{1}{2}B+\frac{1}{2}C)+\frac{1}{2}\lpr(\frac{1}{2}D-\frac{1}{2})
\geq\frac{1}{4}\lpr(B)+\frac{1}{4}\lpr(C)+\frac{1}{4}\lpr(D)+\frac{1}{4}\lpr(-1)
\geq 3\cdot\frac{1}{4}\cdot\frac{1}{2}-\frac{1}{4}=\frac{1}{8}>0$,
a contradiction.

From what we have just proven,
we may conclude that:
\begin{itemize}
\item[a)] the given $\lpr$ on the powerset of $\prt$ has no $3$-convex extension to $\mathcal{L}(\prt)$;
\item[b)] $\lpr$ (viewed now as $3$-coherent on the powerset of $\prt$) has
no $3$-coherent extension on $\mathcal{L}(\prt)$ either:
if it had one,
this extension would be $3$-convex too, contradicting a).
\end{itemize}
\end{example}
It is interesting to realise that, on $\dlin$,
convexity is what is missing to $2$-coherence (and viceversa) to achieve coherence:
\begin{proposition}
$\lpr:\dlin\rightarrow\rset$
is coherent iff it is both $2$-coherent and convex. 
\end{proposition}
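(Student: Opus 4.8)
The plan is to prove the equivalence on $\dlin$ by comparing the axiomatic characterizations already established: Proposition~\ref{pro:conditional convex prevision on linear space}a) says coherence on $\dlin$ is equivalent to (A1)--(A4); Proposition~\ref{pro:conditional convex prevision on linear space}b) says convexity is equivalent to (A1), (A4), (A5); and Proposition~\ref{pro:conditional 2-convex prevision on linear space} says $2$-coherence is equivalent to (A1), (A2), (A4), (A6). So the task reduces to a purely logical manipulation of these axiom lists.

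For the forward direction, if $\lpr$ is coherent on $\dlin$ then it satisfies (A1), (A2), (A3), (A4). The axioms (A1), (A2), (A4) immediately give three of the four axioms needed for $2$-coherence; it remains to derive (A6), i.e. $\lpr(\lambda X|B)\le\lambda\lpr(X|B)$ for $\lambda<0$. The natural route is to combine superadditivity (A3) applied to $X$ and $-X$ (scaled appropriately by positive homogeneity (A2)) to obtain $\lpr(X|B)+\lpr(-X|B)\le\lpr(0|B)=0$ — here using (A2) with $\lambda=0$ to get $\lpr(0|B)=0$ — which is exactly (A6) for $\lambda=-1$; the general $\lambda<0$ case then follows by writing $\lambda X=|\lambda|\cdot(-X)$ and applying (A2). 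For convexity, one observes that (A3) together with (A2) (specifically homogeneity on the coefficients $\lambda$ and $1-\lambda$) yields (A5): $\lpr(\lambda X+(1-\lambda)Y|B)\ge\lpr(\lambda X|B)+\lpr((1-\lambda)Y|B)=\lambda\lpr(X|B)+(1-\lambda)\lpr(Y|B)$. So coherence implies both $2$-coherence and convexity.

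For the converse, suppose $\lpr$ is both $2$-coherent and convex on $\dlin$. From $2$-coherence we have (A1), (A2), (A4), (A6); from convexity we additionally have (A5). By Proposition~\ref{pro:conditional convex prevision on linear space}a), it suffices to establish (A3), superadditivity. The key step is to combine (A5) with positive homogeneity (A2): applying (A5) with $\lambda=\tfrac12$ gives $\lpr(\tfrac12 X+\tfrac12 Y|B)\ge\tfrac12\lpr(X|B)+\tfrac12\lpr(Y|B)$, and then using (A2) with coefficient $2$ on the left-hand side — noting $2(\tfrac12 X+\tfrac12 Y)=X+Y$ — yields $\lpr(X+Y|B)=2\lpr(\tfrac12 X+\tfrac12 Y|B)\ge\lpr(X|B)+\lpr(Y|B)$, which is (A3). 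Having (A1), (A2), (A3), (A4), coherence follows from Proposition~\ref{pro:conditional convex prevision on linear space}a).

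**The main obstacle** I anticipate is bookkeeping rather than conceptual depth: one must be careful that (A5) as stated only covers $\lambda\in\,]0,1[$ (open interval) and that (A2) covers $\lambda\ge 0$, so the interaction $2(\tfrac12 X+\tfrac12 Y)=X+Y$ is legitimate, and one must check the degenerate cases ($\lambda=0$, or $X|B$ vs $-X|B$ both lying in $\dlin$, which holds because $\xset$ is a linear space). A secondary point worth a sentence is confirming that all gambles invoked (like $\tfrac12 X+\tfrac12 Y$, $X+Y$, $\lambda X$) indeed lie in $\dlin$ — this is exactly the linear-space structure of $\xset$ in Definition~\ref{def:def_dlin}, so no difficulty arises. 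The whole argument is a short chain of axiom substitutions, so I would present it compactly without belaboring the routine inequalities.
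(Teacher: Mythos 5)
Your proof is correct and follows essentially the same route as the paper: the substantive converse step is identical, combining axiom (A5) at $\lambda=\tfrac12$ with positive homogeneity (A2) to recover superadditivity (A3), and then invoking the axiomatic characterisations on $\dlin$. The only cosmetic difference is in the forward direction, where the paper simply notes that coherence implies both weaker notions directly from the betting-scheme definitions, while you derive (A5) and (A6) from (A1)--(A4); both arguments are valid.
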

\begin{proof}
Clearly,
coherence implies both $2$-coherence and convexity.

Conversely,
let $\lpr$ be both $2$-coherent and convex on $\dlin$.
By Propositions \ref{pro:conditional convex prevision on linear space} and \ref{pro:conditional 2-convex prevision on linear space},
it only remains to check that (A3) holds to ensure coherence of $\lpr$.
In fact, by (A5) and (A2), we have
\begin{eqnarray}
\label{eq:convexity_plus_2_coherence}
\begin{array}{lll}
\lpr(X+Y|B)&=\lpr(\frac{1}{2}(2X)+\frac{1}{2}(2Y)|B)\geq\\
&\frac{1}{2}\lpr(2X|B)+\frac{1}{2}\lpr(2Y|B)=\lpr(X|B)+\lpr(Y|B).
\end{array}
\end{eqnarray}
\end{proof}

\subsection{The $2$-coherent natural extension}
\label{sec:2_coherent_natural_extension}

$2$-coherent lower probabilities,
being also centered $2$-convex and $1$-AUL,
always have a $2$-convex natural extension.

Appreciably,
they further ensure the existence of a $2$-coherent natural extension.
This tells us that the additional properties of $2$-coherence are \emph{stable},
in the sense that they can be preserved by extension to any set of conditional gambles.

The role of the $2$-coherent natural extension is analogous for $2$-coherence to that of the $2$-convex natural extension for $2$-convexity,
and most derivations are quite similar.
We shall demonstrate in detail only the most differing ones.
\begin{definition}
\label{def:L_2}
Given a lower prevision $\lpr:\dset\rightarrow\rset$ and an
arbitrary $Z|B$,
let
\begin{eqnarray*}
	\label{eq:L_2}
	\begin{array}{lll}
		\LD(Z|B)=&\{\alpha:\sup\{s_1 A(X-\lpr(X|A))-B(Z-\alpha)|\Ss\}<0\\
		&\mbox{ for some } X|A\in\dset, s_1\geq 0\}.
	\end{array}
\end{eqnarray*}
where
\begin{eqnarray*}
\label{eq:s_2}
\Ss=\left\{
\begin{array}{ll}
A\vee B &\mbox{ if } s_1>0\\
B &\mbox{ if } s_1=0.
\end{array}
\right.
\end{eqnarray*}
Then,
the $2$-coherent natural extension $\LED$ of $\lpr$ on $Z|B$ is
\begin{eqnarray*}
\label{eq:2_coherent_nat_ext}
\LED(Z|B)=\sup\LD(Z|B).
\end{eqnarray*}
\end{definition}
\begin{proposition}[\emph{Existence of the $2$-coherent natural extension}]
\label{pro:2_coherent_nat_ext_existence}
Given a lower prevision $\lpr:\dset\rightarrow\rset$,
\begin{itemize}
\item[a)]
$\LD(Z|B)$ is non-empty, $\forall Z|B$.
\item[b)]
$\LD(Z|B)=]-\infty,\LED(Z|B)[$.
\item[c)]
If $\lpr$ is $1$-AUL,
$\LED(Z|B)\leq\sup(Z|B)$, $\forall Z|B$.
\item[d)]
If $\lpr$ is not a $1$-AUL prevision,
$\exists Z|B\in\dset$ such that $\LED(Z|B)=+\infty$.
\end{itemize}
\end{proposition}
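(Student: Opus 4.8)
The argument will closely follow that for the $2$-convex natural extension (Proposition~\ref{pro:2-convex_natext_sufficient}, and behind it Theorem~9 of~\cite{pel05}), so the plan is to prove (a) directly, then (b) by two inclusions, then obtain (c) from (b), and finally settle (d) by a scaling argument. The two features that are genuinely new here, compared with the $2$-convex case, are: first, the admissibility of the stake $s_1=0$, which forces $\LD(Z|B)$ to be non-empty for \emph{every} $Z|B$, with no analogue of the hypothesis ``$\exists\, Y|C\in\dset$ with $C\Rightarrow B$'' being needed; and second, the possibility of letting $s_1\to+\infty$, which is what blows $\LED$ up to $+\infty$ once $1$-AUL fails.

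For (a), I would pick any $X|A\in\dset$ (legitimate since $\dset\neq\varnothing$) and take $s_1=0$, so that $\Ss=B$ and the gain collapses to $-B(Z-\alpha)$, whose supremum conditional on $B$ equals $\alpha-\inf(Z|B)$. Hence every $\alpha<\inf(Z|B)$ lies in $\LD(Z|B)$, so $]-\infty,\inf(Z|B)[\ \subseteq\LD(Z|B)$; in particular $\LD(Z|B)\neq\varnothing$ and $\LED(Z|B)\geq\inf(Z|B)>-\infty$. For (b), the inclusion $\LD(Z|B)\subseteq\ ]-\infty,\LED(Z|B)[$ is proved as in Proposition~\ref{pro:2-convex_natext_sufficient}c): if $\alpha\in\LD(Z|B)$ is witnessed by $X|A$, $s_1\geq 0$ with supremum $s<0$, then increasing $\alpha$ to $\alpha+\delta$ (with $s+\delta<0$) adds the summand $B\delta$, whose conditional supremum over $\Ss$ equals $\delta$ whether $\Ss=A\vee B$ or $\Ss=B$, so the new supremum is $\leq s+\delta<0$ and $\alpha+\delta\in\LD(Z|B)$, showing $\alpha<\sup\LD(Z|B)=\LED(Z|B)$. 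The reverse inclusion follows from the pointwise inequality $-B(Z-\alpha)\leq -B(Z-\beta)$ for $\alpha<\beta$: given $\alpha<\LED(Z|B)$, pick $\beta\in\LD(Z|B)$ with $\alpha<\beta$, and the witness for $\beta$, after this substitution, is a witness for $\alpha$.

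For (c), I would show $\sup(Z|B)\notin\LD(Z|B)$ and then invoke (b). Fix any $X|A\in\dset$, $s_1\geq 0$; since $-B(Z-\sup(Z|B))\geq 0$ pointwise, the gain dominates $s_1A(X-\lpr(X|A))$ everywhere. If $s_1=0$, then $\Ss=B$, the gain equals $-B(Z-\sup(Z|B))$, and its supremum over $B$ is $\sup(Z|B)-\inf(Z|B)\geq 0$. If $s_1>0$, then $\Ss=A\vee B\supseteq A$, so the supremum over $\Ss$ is at least $\sup\{s_1A(X-\lpr(X|A))|A\}=s_1(\sup(X|A)-\lpr(X|A))\geq 0$, the last inequality being $1$-AUL. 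Either way the supremum is $\geq 0$, so $\sup(Z|B)\notin\LD(Z|B)=\ ]-\infty,\LED(Z|B)[$, i.e. $\LED(Z|B)\leq\sup(Z|B)$. For (d), if $\lpr$ is not $1$-AUL there is $X_0|B_0\in\dset$ with $\varepsilon:=\lpr(X_0|B_0)-\sup(X_0|B_0)>0$. Taking $Z|B:=X_0|B_0$, the witness $X|A=X_0|B_0$ with any stake $s_1>\max\{1,(\alpha-\sup(X_0|B_0))/\varepsilon\}$ gives $\Ss=B_0$ and, on $B_0$, the gain $(s_1-1)X_0-s_1\lpr(X_0|B_0)+\alpha$, whose supremum over $B_0$ equals $(s_1-1)\sup(X_0|B_0)-s_1\lpr(X_0|B_0)+\alpha=-s_1\varepsilon-\sup(X_0|B_0)+\alpha<0$. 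Hence $\alpha\in\LD(X_0|B_0)$ for arbitrary $\alpha\in\rset$, so $\LD(X_0|B_0)=\rset$ and $\LED(X_0|B_0)=+\infty$, with $X_0|B_0\in\dset$ as required.

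The computations throughout are elementary; the only points needing attention are that the conditioning event $\Ss$ changes form at $s_1=0$, so every estimate must be split into the cases $s_1=0$ and $s_1>0$, and, in part (c), the comparison of conditional suprema over the nested events $A$ and $A\vee B$. I do not anticipate any real obstacle beyond this bookkeeping, which is precisely why I would keep the write-up short by appealing to the already-established $2$-convex analogues in Propositions~\ref{pro:2-convex_natext_sufficient} and~\ref{pro:natext_properties}.
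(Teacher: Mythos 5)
Your proof is correct and follows essentially the same route as the paper: part a) via $s_1=0$ and $\alpha<\inf(Z|B)$, parts b) and c) by adapting the $2$-convex arguments of Proposition \ref{pro:2-convex_natext_sufficient} (which the paper simply cites, while you spell them out, correctly handling the $s_1=0$ versus $s_1>0$ cases), and part d) by the same unbounded-stake scaling argument, merely with the quantifiers over $\alpha$ and $s_1$ arranged in the opposite order. No gaps.
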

\begin{proof}
\emph{Proof of a).}
Take $\alpha<\inf(Z|B)$ and let $X|A$ be any conditional gamble in $\dset$.
Then, putting $s_1=0$ in Definition \ref{def:L_2},
$\sup\{s_1 A(X-\lpr(X|A))-B(Z-\alpha)|\Ss\}=
\sup\{-B(Z-\alpha)|B\}=\alpha+\sup\{-Z|B\}=\alpha-\inf\{Z|B\}<0$,
i.e. $\alpha\in\LD(Z|B)$.
%
%

\emph{Proofs of b) and c).}
Same as the proofs of,
respectively,
Proposition \ref{pro:2-convex_natext_sufficient}c) and Proposition \ref{pro:2-convex_natext_sufficient}d),
replacing $A(X-\lpr(X|A))$ with $s_1 A(X-\lpr(X|A))$.

\emph{Proof of d).}
Let $\lpr$ be not $1$-AUL.
Then,
$\exists\ Z|B\in\dset, s>0$
such that
$\sup\{sB(Z-\lpr(Z|B))|B\}<0$.
Clearly, this implies $\lpr(Z|B)-\sup\{Z|B\}>0$.

Then, $\forall s_1>1$, $\forall\alpha$ such that $\alpha<\sup\{Z|B\}+s_1(\lpr(Z|B)-\sup\{Z|B\})$,
we get $\sup\{s_1 B(Z-\lpr(Z|B))-B(Z-\alpha)|B\}=
\alpha-\sup\{Z|B\} - s_1(\lpr(Z|B)-\sup\{Z|B\})<0$,
i.e. $\alpha\in\LD(Z|B)$.
Since $\alpha$ can be chosen arbitrarily large in $\LD(Z|B)$
by increasing $s_1$,
$\LED(Z|B)=+\infty$.
\end{proof}
\begin{remark}
\label{rem:finiteness_2_ne}
Proposition \ref{pro:2_coherent_nat_ext_existence} ensures that the condition of $1$-AUL is necessary and sufficient for the finiteness of the $2$-coherent natural extension $\LED$.
\end{remark}
\begin{proposition}[\emph{Properties of the $2$-coherent natural extension}]
\label{pro:2_coherent_nat_ext_prop}
Let $\dset\subseteq\dlin$ and $\lpr:\dset\rightarrow\rset$.
If $\LED$ is finite on $\dlin$, then
\begin{itemize}
\item[a)]
$\LED(X|B)\geq\lpr(X|B), \forall X|B\in\dset$.
\item[b)]
$\LED$ is $2$-coherent on $\dlin$.
\item[c)]
If $\lpr^\prime$ is $2$-coherent on $\dlin$ and
$\lpr^\prime (X|B)\geq\lpr(X|B), \forall X|B\in\dset$,
then $\lpr^\prime\geq\LED$ on $\dlin$.
\item[d)]
$\lpr$ is $2$-coherent on $\dset$ if and only if $\LED=\lpr$ on $\dset$.
\item[e)]
If $\lpr$ is $2$-coherent on $\dset$,
$\LED$ is its smallest $2$-coherent extension on $\dlin$.
\end{itemize}
\end{proposition}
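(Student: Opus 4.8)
The plan is to mirror the proof of Proposition~\ref{pro:natext_properties} (the $2$-convex case), adapting each step to $2$-coherence via Proposition~\ref{pro:conditional 2-convex prevision on linear space} and the definition of $\LD(Z|B)$ in Definition~\ref{def:L_2}. Throughout I assume $\LED$ is finite on $\dlin$, so by Proposition~\ref{pro:2_coherent_nat_ext_existence}(b) we may use $\LD(Z|B)=\ ]-\infty,\LED(Z|B)[$ freely.

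\emph{Proof of a).} First I would show that for $X|B\in\dset$, every $\alpha<\lpr(X|B)$ lies in $\LD(X|B)$: taking $s_1=1$ and the gamble $X|A$ of the definition to be $X|B$ itself, the gain $B(X-\lpr(X|B))-B(X-\alpha)=B(\alpha-\lpr(X|B))$ has supremum on $B$ equal to $\alpha-\lpr(X|B)<0$. Hence $\ ]-\infty,\lpr(X|B)[\ \subseteq\LD(X|B)=\ ]-\infty,\LED(X|B)[$, giving $\LED(X|B)\geq\lpr(X|B)$.

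\emph{Proof of b).} Here I would invoke Proposition~\ref{pro:conditional 2-convex prevision on linear space}: it suffices to check that $\LED$ satisfies (A1), (A2), (A4) and (A6) on $\dlin$. For each axiom the argument is the standard natural-extension manipulation: to verify (A1), suppose $\LED(X|B)-\LED(Y|B)>\sup\{X-Y|B\}$; pick $\alpha$ slightly below $\LED(X|B)$ with $\alpha\in\LD(X|B)$, witnessed by some $W|A\in\dset$ and $s_1\geq 0$ with $\sup\{s_1A(W-\lpr(W|A))-B(X-\alpha)|\Ss\}<0$; then for $\beta=\alpha-\sup\{X-Y|B\}$ the same witness shows $\beta\in\LD(Y|B)$, whence $\LED(Y|B)\geq\beta$, contradicting the strict inequality. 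The verifications of (A2) and (A6) exploit that if $(s_1,W|A)$ witnesses $\alpha\in\LD(X|B)$ then $(\,|\lambda|^{-1}s_1, W|A)$ (with the conditioning event adjusted as in Definition~\ref{def:L_2}) witnesses $\lambda\alpha\in\LD(\lambda X|B)$ for $\lambda>0$, and, using the freedom $s_0\in\rset$ implicit through the asymmetric roles, the required inequality for $\lambda<0$; equivalently one can copy verbatim the homogeneity part of the proof of Proposition~\ref{pro:conditional 2-convex prevision on linear space}, since the scaling only affects the single buying term $s_1A(W-\lpr(W|A))$. Axiom (A4) is the Generalised Bayes Rule and is handled exactly as in \cite[Theorem~9]{pel05} (and in Proposition~\ref{pro:natext_properties}b), with the summation over stakes $s_1,\dots,s_m$ collapsed to a single term $s_1$): one shows both $\LED(A(X-\LED(X|A\wedge B))|B)\leq 0$ and $\geq 0$ by constructing appropriate witnesses that cancel the conditioning.

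\emph{Proofs of c), d), e).} For c), let $\lpr'$ be $2$-coherent on $\dlin$ with $\lpr'\geq\lpr$ on $\dset$; I would show $\LD(X|B)\subseteq\ ]-\infty,\lpr'(X|B)[$ for $X|B\in\dlin$. Indeed, if $\alpha\in\LD(X|B)$ is witnessed by $W|A\in\dset$, $s_1\geq 0$, then since $\lpr'(W|A)\geq\lpr(W|A)$ the gain with $\lpr$ replaced by $\lpr'$ is pointwise $\leq$ the original, so it still has negative supremum on $\Ss$; applying the defining inequality of $2$-coherence (Definition~\ref{def:cond_2-coherence}) to $\lpr'$ with $X_1|B_1=W|A$, $X_0|B_0=X|B$, stakes $s_1$ and $s_0=1$ forces $\alpha<\lpr'(X|B)$. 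Hence $\LED\leq\lpr'$ on $\dlin$. Part d): if $\lpr$ is $2$-coherent on $\dset$ then by a) $\LED\geq\lpr$ there, and applying c) with $\lpr'=\lpr$ on $\dset$ (extended arbitrarily, or rather noting $\LED$ restricted to $\dset$ is dominated by any $2$-coherent dominating extension — more directly, run the c)-argument with $X|B\in\dset$ to get $\LED(X|B)\leq\lpr(X|B)$); conversely $\LED=\lpr$ on $\dset$ together with b) (the finiteness hypothesis) makes $\lpr$ the restriction of a $2$-coherent prevision, hence $2$-coherent. Part e) is immediate from b), c), d).

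The main obstacle I anticipate is the verification of (A4) (GBR) for $\LED$ in part b): unlike (A1), (A2), (A6), it is not a one-line scaling argument but requires carefully choosing two families of witnessing gambles — one to push $\LED(A(X-\LED(X|A\wedge B))|B)$ below $0$ and one to push it above — while respecting the two-gamble, single-stake restriction of Definition~\ref{def:L_2}. This is exactly the point where one must check that the collapse from the general coherent natural extension (arbitrarily many buying terms) to the $2$-coherent one (a single term $s_1A(W-\lpr(W|A))$) does not break the argument of \cite[Theorem~9]{pel05}; fortunately, as already observed for Proposition~\ref{pro:natext_properties}, the GBR step in that proof uses only a single buying term, so the adaptation goes through.

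\begin{proof}
\emph{Proof of a).}
For $X|B\in\dset$ and any $\alpha<\lpr(X|B)$, take $s_1=1$ and $X|A=X|B$ in Definition~\ref{def:L_2}: then $\Ss=B$ and $\sup\{B(X-\lpr(X|B))-B(X-\alpha)|B\}=\alpha-\lpr(X|B)<0$, so $\alpha\in\LD(X|B)$. Hence $\lpr(X|B)\leq\sup\LD(Z|B)|_{Z|B=X|B}=\LED(X|B)$.

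\emph{Proof of b).}
By Proposition~\ref{pro:conditional 2-convex prevision on linear space} it suffices to show that $\LED$ satisfies (A1), (A2), (A4) and (A6) on $\dlin$. These are verified by the same witness-manipulation arguments used in \cite[Theorem~9]{pel05} and in Proposition~\ref{pro:natext_properties}b), with the summation over stakes $s_1,\dots,s_m$ there replaced throughout by a single term with stake $s_1\geq 0$, in agreement with Definition~\ref{def:L_2}. Concretely: for (A1), if $\alpha\in\LD(X|B)$ is witnessed by $W|A\in\dset$, $s_1\geq 0$, then $\beta=\alpha-\sup\{X-Y|B\}$ is witnessed (by the same $W|A$, $s_1$) to lie in $\LD(Y|B)$, giving $\LED(X|B)-\LED(Y|B)\leq\sup\{X-Y|B\}$. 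For (A2) and (A6), a witness $(s_1,W|A)$ for $\alpha\in\LD(X|B)$ yields, after scaling the buying stake by $1/|\lambda|$ and adjusting the conditioning event as in Definition~\ref{def:L_2}, a witness for $\lambda\alpha\in\LD(\lambda X|B)$ when $\lambda>0$, and for $\lambda\alpha\in\LD(\lambda X|B)$ with the reversed role when $\lambda<0$, exactly as in the proof of Proposition~\ref{pro:conditional 2-convex prevision on linear space}. For (A4) one argues as in \cite[Theorem~9]{pel05}: the GBR step there already uses only a single buying term, so it carries over verbatim, establishing $\LED(A(X-\LED(X|A\wedge B))|B)=0$.

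\emph{Proof of c).}
Let $\lpr'$ be $2$-coherent on $\dlin$ with $\lpr'(X|B)\geq\lpr(X|B)$ for all $X|B\in\dset$. Fix $X|B\in\dlin$ and $\alpha\in\LD(X|B)$, witnessed by $W|A\in\dset$, $s_1\geq 0$, so that $\sup\{s_1A(W-\lpr(W|A))-B(X-\alpha)|\Ss\}<0$. Since $\lpr'(W|A)\geq\lpr(W|A)$, the gain $s_1A(W-\lpr'(W|A))-B(X-\alpha)$ is pointwise not greater than the previous one, hence also has negative supremum on $\Ss$. Applying Definition~\ref{def:cond_2-coherence} to $\lpr'$ with $X_1|B_1=W|A$, $X_0|B_0=X|B$, stakes $s_1$ and $s_0=1$, non-negativity of the supremum forces $\alpha<\lpr'(X|B)$. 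Thus $\LED(X|B)\leq\lpr'(X|B)$ for all $X|B\in\dlin$.

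\emph{Proof of d).}
If $\LED=\lpr$ on $\dset$, then by b) $\lpr$ is the restriction to $\dset$ of a $2$-coherent lower prevision on $\dlin$, hence $2$-coherent on $\dset$. Conversely, let $\lpr$ be $2$-coherent on $\dset$. By a), $\LED(X|B)\geq\lpr(X|B)$ for $X|B\in\dset$. For the reverse inequality, repeat the argument of c) with $X|B\in\dset$ and the witnessing inequality for $\lpr$ itself (i.e. take $\lpr'=\lpr$ in the reasoning, using $2$-coherence of $\lpr$ on $\dset$): any $\alpha\in\LD(X|B)$ satisfies $\alpha<\lpr(X|B)$, so $\LED(X|B)\leq\lpr(X|B)$. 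Hence $\LED=\lpr$ on $\dset$.

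\emph{Proof of e).}
Immediate from b) (extension), a) (domination), and d): if $\lpr$ is $2$-coherent on $\dset$, $\LED$ is $2$-coherent on $\dlin$ and extends $\lpr$; and by c) any $2$-coherent extension of $\lpr$ to $\dlin$ dominates $\LED$.
\end{proof}
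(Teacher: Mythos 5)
Your parts a), c), d), e) and the (A1), (A2), (A4) portions of b) are correct and essentially follow the paper's route (the paper simply delegates them to the proofs of Theorem 9 in \cite{pel05} and Theorem 3 in \cite{pel09}, with the multi-term buying gain collapsed to a single term $s_1A(W-\lpr(W|A))$, exactly as you do). The genuine gap is your verification of (A6) for $\LED$, which is precisely the step the paper singles out as ``the most differing one'' and proves in detail. Your claim that a witness $(s_1,W|A)$ for $\alpha\in\LD(X|B)$ yields, after rescaling, ``a witness for $\lambda\alpha\in\LD(\lambda X|B)$ when $\lambda<0$'' cannot be right: since $\LD(X|B)=\ ]-\infty,\LED(X|B)[$, the set $\lambda\LD(X|B)$ with $\lambda<0$ is unbounded above, so this would force $\LED(\lambda X|B)=+\infty$, contradicting the finiteness hypothesis. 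What rescaling by $|\lambda|$ actually gives is only (A2)-type information: it maps a witness for $\alpha\in\LD(\lambda X|B)$ to one for $|\lambda|^{-1}\alpha\in\LD(-X|B)$, i.e.\ it relates $\LED(\lambda X|B)$ to $\LED(-X|B)$, never $\LED(\lambda X|B)$ to $\lambda\LED(X|B)$. Nor does the proof of Proposition \ref{pro:conditional 2-convex prevision on linear space} transfer ``verbatim'': there (A6) is extracted from the $2$-coherence of a \emph{given} prevision by choosing a gain with negative selling stake $s_0=1/\lambda$ built from that same prevision's values on $X|B$ and $\lambda X|B$; for $\LED$, defined as $\sup\LD$, no such single gain is available, and (A6) is a genuine constraint linking the two distinct sets $\LD(X|B)$ and $\LD(-X|B)$.

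The paper's proof of (A6) is correspondingly two-step and is not a one-line witness manipulation: first, using the already established (A2), it reduces (A6) to the inequality $\LED(X|B)+\LED(-X|B)\leq 0$; second, it takes arbitrary $\alpha^{+}\in\LD(X|B)$ and $\alpha^{-}\in\LD(-X|B)$ with witnessing gains $g_1=s_1B_1(X_1-\lpr(X_1|B_1))$ and $g_2=s_2B_2(X_2-\lpr(X_2|B_2))$, and shows $\sup\{g_1+g_2+B(\alpha^{+}+\alpha^{-})\,|\,H\}<0$ with $H=B\vee B_1\vee B_2$, by decomposing $H$ into the four disjoint events $B$, $B_1\wedge B_2\wedge\nega{B}$, $\nega{B_1}\wedge B_2\wedge\nega{B}$, $B_1\wedge\nega{B_2}\wedge\nega{B}$ and bounding the conditional suprema of the two witnessing gambles on each piece. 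Combining this with $\sup\{g_1+g_2\,|\,B_1\vee B_2\}\geq 0$ (a consistency property of $\lpr$ on $\dset$ involving \emph{two} buying terms, which your proposal never invokes) forces $\alpha^{+}+\alpha^{-}<0$ for all such pairs, whence the desired inequality. So your overall architecture matches the paper's, but the (A6) step needs to be replaced by an argument of this kind; as written, it would not go through.
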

\begin{proof}
\emph{Proof of a).}
See the proof of Theorem 9, a) in \cite{pel05}.

\emph{Proof of b).}
By Proposition \ref{pro:conditional 2-convex prevision on linear space},
we prove that $\LED$ satisfies axioms (A1), (A2), (A4), (A6).

(A1) and (A4):
see the proof that, respectively,
axioms (D1) and (D3) hold in \cite{pel05},
Theorem 9, b)
(with the obvious modifications
$\LG_1 = s_1 B_1(X_1 - \lpr(X_1|B_1))$, $s_1\geq 0$).

(A2):
The proof corresponds to that for axiom (A2) in \cite{pel09},
Theorem 3, p. 625 (conditioning now $\LG_1$ on $\Ss$).

(A6):
\emph{To prove (A6)},
follow the next two steps:
\begin{itemize}
\item[1)]
It should be proven that
\begin{eqnarray*}
	\LED(\lambda X|B)\leq\lambda\LED(X|B), \forall\lambda<0.
\end{eqnarray*}
Since we already know that (A2) holds,
we can use it to write
$\LED(\lambda X|B)=\LED(-\lambda(-X)|B)=-\lambda\LED(-X|B)
\leq\lambda\LED(X|B)$
if and only if
$-\LED(-X|B)\\ \geq\LED(X|B)$.

Therefore,
we can equivalently prove instead that
\begin{eqnarray}
	\label{eq:sum_opposites}
	\LED(X|B)+\LED(-X|B)\leq 0.
\end{eqnarray}
\item[2)]
We prove \eqref{eq:sum_opposites}.
Take arbitrarily $\alpha^+\in\LD(X|B)$, $\alpha^-\in\LD(-X|B)$.
Letting $g_i=s_i B_i(X_i-\lpr(X_i|B_i))$, $s_i\geq 0$
for $i=1,2$,
we have
by the definition of $\LD(X|B)$, $\LD(-X|B)$:
\begin{eqnarray*}
	\begin{array}{ll}
	\sup(g_1-B(X-\alpha^+)|B_1\vee B)=\sup(Z_1|B_1\vee B)<0,\\
	\sup(g_2-B(-X-\alpha^-)|B_2\vee B)=\sup(Z_2|B_2\vee B)<0.
	\end{array}
\end{eqnarray*}
Defining $H=B\vee B_1\vee B_2$,
it holds also that
\begin{eqnarray}
\label{eq:sup_sum}
\sup(Z_1 + Z_2|H)=\sup(g_1+g_2+B(\alpha^+ + \alpha^-)|H)<0.
\end{eqnarray}
In fact,
decompose $H$ into the sum of $4$ disjoint events as follows:
\begin{eqnarray}
\label{eq:cond_decomposition}
H=B\vee(B_1\wedge B_2\wedge\nega{B})
\vee(\nega{B_1}\wedge B_2\wedge\nega{B})
\vee(B_1\wedge\nega{B_2}\wedge\nega{B}).
\end{eqnarray}
Now condition $Z_1$ and $Z_2$ on any of the $4$ events in \eqref{eq:cond_decomposition} that are not impossible.
Let $H_j$ be the generic such event and $J=\{j\in\{1,2,3,4\}: H_j\neq\varnothing\}$.
Then note that $\sup \{Z_i|H_j\}\leq 0, (i=1,2)$.
In fact,
considering $Z_1$,
if $H_j$ is any of $B$, $B_1\wedge B_2\wedge\nega{B}$ or
$B_1\wedge\nega{B_2}\wedge\nega{B}$,
then $\sup\{Z_1|H_j\}\leq\sup\{Z_1|B_1\vee B\}<0$,
whilst $\sup\{Z_1|\nega{B_1}\wedge B_2\wedge\nega{B}\}=
\sup\{0|\nega{B_1}\wedge B_2\wedge\nega{B}\}=0$.
Similarly,
$\sup\{Z_2|H_j\}\leq 0$,
with equality iff $H_j=B_1\wedge\nega{B_2}\wedge\nega{B}$.
It ensues also that the two suprema $\sup\{Z_1|H_j\}$, $\sup\{Z_2|H_j\}$  cannot be simultaneously null, for $j\in J$.

Hence,
$\sup(Z_1|H)+\sup(Z_2|H)=\max\{\sup(Z_1|H_j)+\sup(Z_2|H_j), j\in J\}<0$.
The inequality \eqref{eq:sup_sum} follows,
since $\sup(Z_1+Z_2|H)\leq\sup(Z_1|H)+\sup(Z_2|H)$.
Further,
\begin{eqnarray*}
\label{eq:sup_not_less}
\sup(g_1 + g_2|H)\geq\sup(g_1 + g_2|B_1\vee B_2)\geq 0.
\end{eqnarray*}
using $2$-coherence of $\lpr$ on $\dset$ at the last inequality.

Therefore,
in order for inequality \eqref{eq:sup_sum} to hold,
necessarily $\alpha^+ + \alpha^- < 0$,
i.e. $\alpha^+ < -\alpha^-$,
$\forall \alpha^+\in\LD(X|B), \alpha^-\in\LD(-X|B)$.
Equivalently,
\begin{eqnarray*}
	\begin{array}{ll}
		\sup\{\alpha^+\in\LD(X|B)\}=\LED(X|B)\leq
		\inf\{-\alpha^-:\alpha^-\in\LD(-X|B)\}=\\
		-\sup\{\alpha^-\in\LD(-X|B)\}=
		-\LED(X|B),
	\end{array}
\end{eqnarray*}
which gives \eqref{eq:sum_opposites}.
\end{itemize}

\emph{Proofs of c) and d).}
Analogous to the proof of Theorem 9, c) and d) in \cite{pel05}.

\emph{Proof of e).}
Implied by c) and d).
\end{proof}
We may thus conclude that centered $2$-convexity and $2$-coherence appear to be the most significant weakenings
of (centered) convexity and coherence.

\subsection{About the Generalised Bayes Rule}
\label{subsec:GBR}
By Propositions \ref{2-convexity_structure} and \ref{pro:conditional 2-convex prevision on linear space},
the Generalised Bayes Rule (GBR) is a necessary consistency
condition for both $2$-convex and $2$-coherent lower previsions.
This guarantees that this key updating rule holds even with weaker
consistency concepts than coherence or convexity. However, it would
be erroneous to believe that nothing about the GBR changes with such
looser consistency requirements. To see this, put $B = \Omega$ in
(A4), getting
\begin{eqnarray*}
\label{eq:simple_GBR}
\lpr(A(X-\lpr(X|A)))=0,
\end{eqnarray*}
which informs us that $\lpr(X|A)$ is a solution of the equation
\begin{eqnarray}
\label{eq:GBR_equation}
\lpr(A(X-r))=0.
\end{eqnarray}
From Proposition 9 in \cite{pel05},
we know that if $\lpr$ is convex on $\dset\supset\{A, X|A,A(X-\lpr(X|A))\}$ and $\lpr(A)>0$,
then $\lpr(X|A)$ is the \emph{unique} solution of \eqref{eq:GBR_equation}.
A uniqueness result for coherent lower previsions is given in \cite{wal91}, Sec. 6.4.1.

With $2$-coherent or $2$-convex lower previsions,
$\lpr(X|A)$ may no longer be the unique solution of \eqref{eq:GBR_equation}.
The next result illustrates this for $2$-coherence.
\begin{proposition}
\label{pro:GBR_not_unique} Let $\lpr:D=\{A, X|A, A(X-r),
A(X-q)\}\rightarrow\rset$ be a lower prevision, such that $r\neq q$,
$\lpr(A(X-r))=\lpr(A(X-q))=0$, $A\neq\Omega$, $1 \geq \lpr(A)>0$.
Then $\lpr$ is $2$-coherent on $\dset$ if and only if
\begin{eqnarray}
\label{eq:in_interval}
\lpr(X|A), r, q\in [\inf(X|A),\sup(X|A)].
\end{eqnarray}
\end{proposition}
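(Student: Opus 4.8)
The plan is to exploit the characterisation of $2$-coherence on $\dlin$ given in Proposition~\ref{pro:conditional 2-convex prevision on linear space}, but the subtlety is that $\dset=\{A, X|A, A(X-r), A(X-q)\}$ is \emph{not} of the form $\dlin$, so I would instead argue directly from Definition~\ref{def:cond_2-coherence}, checking the sign of $\sup\{\LGD|\Ss\}$ for each pair of conditional gambles in $\dset$ and each admissible choice of stakes $s_1\geq 0$, $s_0\in\rset$.

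\textbf{Necessity.} First I would assume $\lpr$ is $2$-coherent and derive \eqref{eq:in_interval}. Since centered $2$-coherence is not available here, I would instead use $2$-coherence directly. For $\lpr(X|A)$: taking the two conditional gambles $X|A$ and $A(X-r)$ with suitable stakes, or more simply observing that $A(X-\lpr(X|A))$ is \emph{not} in $\dset$ so one cannot invoke (A4), I would instead pair $X|A$ against itself (with $s_1=1$, $s_0$ ranging) to get internality $\lpr(X|A)\in[\inf(X|A),\sup(X|A)]$ — this is just $1$-coherence, which $2$-coherence implies. For $r$ and $q$: here the key move is to use the hypotheses $\lpr(A(X-r))=0$ and $\lpr(A(X-q))=0$ together with $2$-coherence applied to the pair $\{A(X-r), A\}$ (or $\{A(X-q),A\}$) with appropriately scaled stakes. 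Concretely, from $\lpr(A(X-r))=0$ and the $2$-coherence inequality one extracts $\sup\{A(X-r)|A\}\geq 0$ and $\inf\{A(X-r)|A\}\leq 0$ (using stakes of both signs on $s_0$), which rearrange to $r\in[\inf(X|A),\sup(X|A)]$; likewise for $q$. Internality of $\lpr(A)$ in $[0,1]$ is built into the hypothesis $1\geq\lpr(A)>0$, and $\lpr(A(X-r))=0\in[\inf,\sup]$ is automatic.

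\textbf{Sufficiency.} Conversely, assuming \eqref{eq:in_interval} and $1\geq\lpr(A)>0$, I would verify Definition~\ref{def:cond_2-coherence} for every pair. The gambles split into those conditioned on $A$ (namely $X|A$, $A(X-r)|\Omega$, $A(X-q)|\Omega$ — note the last two are unconditional) and the event $A|\Omega$. The cases to check are: (i) two $A$-conditioned gambles, where $\Ss = A$ and the gain, after using that all relevant quantities lie in $[\inf(X|A),\sup(X|A)]$, has non-negative supremum on $A$; (ii) a pair involving $A$ as an unconditional gamble versus an $A$-conditioned or unconditional one — here $\Ss=\Omega$ and one must check the supremum over both $A$ and $\nega A$, using $0<\lpr(A)\leq 1$; (iii) the pair $\{A(X-r), A(X-q)\}$, both unconditional, with $\Ss=\Omega$. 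In each sub-case the computation reduces to elementary bounds: on $A$ one writes $A(X-r)-\lpr(A(X-r)) = X-r$ and uses $r,\lpr(X|A)\in[\inf(X|A),\sup(X|A)]$; on $\nega A$ the unconditional gambles $A(X-r)$, $A(X-q)$, $A$ all vanish, so the gain there is either $0$ or $\pm s_0\lpr(\cdot)$, whose supremum combines favourably.

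\textbf{Main obstacle.} The routine part is the case analysis; the genuinely delicate point is case (ii), the mixed transactions where one leg is the \emph{unconditional} gamble $A$ (with value $\lpr(A)\in(0,1]$) and the other leg is conditioned on $A$, because then $\Ss=\Omega$ forces us to evaluate the supremum of $\LGD$ over all of $\prt$, and on $\nega A$ the $A$-part of the gain disappears while the $A$-conditioned part does not — so one must carefully track that the contribution from $\nega A$ (a term like $s_0\lpr(A)$ or $-s_1\lpr(X|A)$, depending on which gamble is bought) does not force the supremum negative. Getting the signs of $s_0$ and $s_1$ right there, and checking that $\lpr(A)\leq 1$ is exactly what is needed (not merely $\lpr(A)>0$), is where I expect to spend the real effort; everything else is bookkeeping with the interval \eqref{eq:in_interval}.
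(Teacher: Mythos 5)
Your overall strategy --- direct verification of Definition \ref{def:cond_2-coherence} pair by pair for sufficiency, and, for necessity, getting internality of $\lpr(X|A)$ from $1$-coherence and squeezing $r,q$ by betting on the pair $\{A(X-r),A\}$ with stakes of both signs --- is the same as the paper's. The necessity sketch is sound in spirit, but you should make explicit the mechanism: with a nonzero stake on the leg $A$, the gain on $\nega{A}$ equals $\mp s\lpr(A)<0$ (this is where $\lpr(A)>0$ enters), so the supremum must be realised on $A$; one then lets the stake on $A$ shrink to $0$ to obtain $r\geq\inf(X|A)$ and $r\leq\sup(X|A)$.

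There is, however, a genuine error in your sufficiency case (i). The gambles $A(X-r)$ and $A(X-q)$ are \emph{unconditional} elements of $\dset$ (conditioning event $\Omega$), so for a pair such as $(X|A,\,A(X-r))$ with a nonzero stake on $A(X-r)$ the conditioning event is $\Ss=\Omega$, not $A$ as you assert; and, more importantly, your claim that the gain ``has non-negative supremum on $A$'' is false under hypothesis \eqref{eq:in_interval}. Take $\lpr(X|A)=\inf(X|A)$ and $r=\sup(X|A)>\inf(X|A)$, buy $A(X-r)$ and sell $X|A$ with unit stakes: the gain is $A(X-r)-A(X-\lpr(X|A))=A(\lpr(X|A)-r)$, whose supremum over $A$ is $\lpr(X|A)-r<0$. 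The same happens for the pair $(A(X-r),A(X-q))$ with unit stakes when $r>q$. What saves $2$-coherence in these cases is precisely that $\Ss=\Omega$ and that the gain vanishes on $\nega{A}$, because $\lpr(A(X-r))=\lpr(A(X-q))=0$ and the $X|A$-leg is multiplied by $A$: hence $\sup(\LGD|\Omega)\geq\LGD(\nega{A})=0$. This is how the paper treats every mixed case with a nonzero stake on an unconditional leg, reserving the supremum-on-$A$ computation for the sub-cases where only one stake is active and for the pairs involving the event $A$ (where $\lpr(A)\leq1$ is what makes $\sup(\LGD|A)\geq0$ work). Relatedly, your ``main obstacle'' paragraph has the vanishing pattern backwards: on $\nega{A}$ it is the gain from the leg $A$ that survives (equal to $\mp s\lpr(A)$), while the gains from $X|A$, $A(X-r)$, $A(X-q)$ vanish; there is no term $-s_1\lpr(X|A)$ on $\nega{A}$. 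Once you correct the identification of $\Ss$ and argue via the zero value of the gain on $\nega{A}$ for those pairs, your plan coincides with the paper's proof.
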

\begin{proof}
Suppose first that \eqref{eq:in_interval} holds.
To prove that $\lpr$ is $2$-coherent on $\dset$,
we may check by Definition \ref{def:cond_2-coherence} that
any admissible gain $\LGD$ satisfies \eqref{eq:cond_2-coherence}.
For this,
we consider the gains from betting on all couples of elements of $\dset$
(and their special cases $s_0=0$ or $s_1=0$,
where the effective bet is on a single element).
These gains may be partitioned into two groups:
\begin{enumerate}
\item
Gains from bets on the couples $(X|A, A(X-r))$,
$(X|A, A(X-q))$, $(A(X-r), A(X-q))$.

The proofs that all such gains satisfy \eqref{eq:cond_2-coherence}
are very similar for all couples. To exemplify, take the couple
$(X|A, A(X-r))$. Any admissible gain is either of
\begin{eqnarray*}
\begin{array}{lll}
\LGD=s_1 A(X-\lpr(X|A))-s_0 A(X-r);\\
\LGD^\prime=s_1 A(X-r)-s_0 A(X-\lpr(X|A)).
\end{array}
\end{eqnarray*}
Let us first look at $\LGD$.
If $s_0\neq 0$, $\LGD|\Ss=\LGD|\Omega$,
and $\sup\LGD\geq\LGD(\nega{A})=0$.
If $s_0=0$ (and $s_1>0$),
$\sup(\LGD|\Ss)=\sup(\LGD|A)=s_1\sup(X|A-\lpr(X|A))\geq 0$ by
\eqref{eq:in_interval}.

Consider now $\LGD^\prime$. If $s_1\neq 0$,
$\LGD^\prime|\Ss=\LGD^\prime|\Omega$, and
$\sup\LGD^\prime\geq\LGD^\prime(\nega{A})=0$. Let then $s_1=0$,
hence $\Ss=A$.

If $s_0>0$, $\sup(\LGD^\prime|\Ss)=\sup(-s_0 A(X-\lpr(X|A))|A)=
s_0\sup(\lpr(X|A)-X|A)=s_0(\lpr(X|A)-\inf(X|A))\geq 0$ by
\eqref{eq:in_interval}.

If $s_0<0$, $\sup(\LGD^\prime|\Ss)=\sup(-s_0 A(X-\lpr(X|A))|A)= -s_0
(\sup(X|A)-\lpr(X|A))\geq 0$, again by \eqref{eq:in_interval}.
\item
Gains from betting on one of the remaining three couples.

All such couples include $A$,
and the proof is identical for each of them.
Take for instance the couple $(A, X|A)$.
The related gains are
\begin{eqnarray*}
\begin{array}{lll}
\LGD=s_1 (A-\lpr(A))-s_0 A(X-\lpr(X|A));\\
\LGD^\prime=s_1 A(X-\lpr(X|A))-s_0 (A-\lpr(A)).
\end{array}
\end{eqnarray*}
Consider $\LGD$. If $s_1=0$ (and $s_0\neq 0$), $\LGD$ coincides with
$\LGD^\prime$ in 1., case $s_1 = 0$. Hence the same derivation and
conclusions apply.

Let now $s_1>0$, hence $\Ss=\Omega$.
Then $\sup(\LGD|\Omega)\geq\sup(\LGD|A)\geq0$.
The last inequality holds because
\begin{eqnarray}
\label{eq:inequality_holds}
\sup(\LGD|A)=s_1(1-\lpr(A))+s_0\lpr(X|A)+\sup(-s_0 X|A)
\end{eqnarray}
and from \eqref{eq:in_interval}, \eqref{eq:inequality_holds} we obtain:

if $s_0\geq 0$,
$\sup(\LGD|A)\geq s_1(1-\lpr(A))+s_0\inf(X|A)-s_0\inf(X|A)\geq 0$;

if $s_0<0$,
$\sup(\LGD|A)\geq s_1(1-\lpr(A))+s_0\sup(X|A)-s_0\sup(X|A)\geq 0$.

Referring to $\LGD^\prime$, if $s_0=0$ then $\Ss=A$,
$\sup(\LGD^\prime|A)=s_1(\sup(X|A)-\lpr(X|A))\geq 0$ by
\eqref{eq:in_interval}.

When $s_0\neq 0$, $\Ss=\Omega$ and
$\sup\LGD^\prime=\max\{\sup(\LGD^\prime|A),
\sup(\LGD^\prime|\nega{A})\}\geq 0$.

In fact, if $s_0>0$ then $\sup(\LGD^\prime|\nega{A})=s_0\lpr(A)>0$.
If $s_0<0$, $\sup(\LGD^\prime|A)=-s_1\lpr(X|A)-s_0(1-\lpr(A))+
s_1\sup(X|A)\geq -s_1\sup(X|A)-s_0(1-\lpr(A))+s_1\sup(X|A)\geq 0$.
\end{enumerate}
Conversely, let now $\lpr$ be $2$-coherent. Since $\lpr$ is also
$1$-coherent, $\lpr(X|A)$ satisfies condition
\eqref{eq:in_interval}. To see that also $r$ does so (the proof for
$q$ is identical), note that the gain
\begin{eqnarray*}
\LGD=s_1 (A-\lpr(A))-A(X-r), s_1>0
\end{eqnarray*}
is such that $\sup\LGD\geq 0$,
by \eqref{eq:cond_2-coherence}.
Since $\sup(\LGD|\nega{A})=-s_1\lpr(A)<0$,
necessarily $\sup(\LGD|A)=s_1(1-\lpr(A))+r-\inf(X|A)\geq 0$,
that is
\begin{eqnarray*}
\inf(X|A)\leq r+s_1(1-\lpr(A)), \forall s_1>0.
\end{eqnarray*}
From the above inequality, $r\geq\inf(X|A)$.

To prove that $r\leq\sup(X|A)$,
consider the gain
\begin{eqnarray*}
\LGD^\prime=A(X-r)-s_0(A-\lpr(A)), s_0<0,
\end{eqnarray*}
and note that $\LGD^\prime|\nega{A}=s_0\lpr(A)<0$. This implies, for
any $s_0<0$, $\sup(\LGD^\prime|A)=\sup(X|A)-r-s_0(1-\lpr(A))\geq 0$.
Hence, $r\leq\sup(X|A)$.
\end{proof}
\emph{Comment.}
Proposition \ref{pro:GBR_not_unique} establishes
that equation \eqref{eq:GBR_equation} has more solutions, when
$\lpr$ is $2$-coherent. Actually, there are infinitely many,
provided they comply with the internality condition
\eqref{eq:in_interval}. Lack of uniqueness means also that we are
not obliged to choose one of these solutions: any two of them can
$2$-coherently coexist in the set $\dset$ of Proposition
\ref{pro:GBR_not_unique}.
Even as many solutions as we wish may be found in a single $2$-coherent assessment.
Just think that this does not essentially alter
the proof of Proposition \ref{pro:GBR_not_unique}, since
$2$-coherence restricts checking it on gains referring to (at most)
couples of gambles.

Since a $2$-coherent prevision is also $2$-convex, it is clear that
the GBR will generally not be the unique solution of equation
\eqref{eq:GBR_equation} even when $\lpr$ is $2$-convex.
We omit detailing this case.

\section{Weakly consistent uncertainty models}
\label{sec:weakly_consistent_models} As mentioned in the
Introduction, a motivation for studying the loose forms of
consistency introduced in this paper is their capability of
encompassing or extending uncertainty models already investigated in
the literature. Even though these models may depart also
considerably from coherence and convexity, they can nevertheless be
accommodated into a unifying betting scheme, ranging from $2$-convex
to coherent lower previsions.
\subsection{Capacities and niveloids}
\label{sec:capacities_niveloids}
Focusing on $2$-convexity, we first recall a few definitions and
some results concerning unconditional $2$-convex lower previsions.
\begin{definition}
\label{def:capacity}
Given a finite partition $\prt$,
and denoting with $2^{\prt}$ its powerset,
a mapping $c:2^{\prt}\rightarrow [0,1]$ is a (normalised) \emph{capacity}
whenever $c(\varnothing)=0$, $c(\Omega)=1$ (normalisation) and
$\forall A_1, A_2 \in 2^{\prt}$ such that $A_1\Rightarrow A_2$, $c(A_1)\leq c(A_2)$ ($1$-monotonicity).
\end{definition}
\begin{definition}
\label{def:niveloid}
Given a linear space $\lset$ of random variables,
a \emph{niveloid} \cite{cer14, dol95} is a functional $N:\lset\rightarrow\overline{\rset}=\rset\cup\{-\infty,+\infty\}$
which is translation invariant and monotone, i.e. such that
\begin{eqnarray}
\label{eq:niveloids}
\begin{array}{lll}
N(X+\mu)=N(X)+\mu, \forall X\in\lset, \forall \mu\in\rset;\\
X\geq Y \mbox{ implies } N(X)\geq N(Y), \forall X,Y\in\lset.
\end{array}
\end{eqnarray}
\end{definition}
As well-known,
capacities are uncertainty measures with really minimal quantitative requirements.
Niveloids can be viewed as a generalisation of capacities to linear spaces of random variables
which preserves their minimality properties.
Strictly speaking,
this is true for centered niveloids,
i.e. such that $N(0)=0$.
In fact, the centering condition $N(0)=0 $ does not ensue from the definition of niveloid.
Note also that niveloids apply to random variables which may be unbounded too.

It has been proven in \cite[Section 4.1]{bar09}\footnote{ See
Footnote \ref{foo:1_become_2}. } that:
\begin{proposition}
\label{pro:cap_niv}
\begin{itemize}
\item[a)] Let $\lpr$ be defined on $2^{\prt}$.
Then $\lpr$ is a centered $2$-convex lower prevision if and only if
it is a capacity.
\item[b)] Let $\lpr$ be defined on a linear space $\lset$ of gambles.
Then $\lpr$ is a $2$-convex lower prevision if and only if it is a (finite-valued) niveloid.
\end{itemize}
\end{proposition}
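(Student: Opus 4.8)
The plan is to prove each equivalence in Proposition \ref{pro:cap_niv} by reducing it to the axiomatic characterisation of $2$-convexity on $\dlin$ given in Proposition \ref{2-convexity_structure}, namely that $\lpr$ is $2$-convex on $\dlin$ if and only if (A1) and (A4) hold. Since both $2^{\prt}$ and a linear space $\lset$ are instances of $\dlin$ (take $\bset=\{\Omega,\varnothing\}$ for the linear space, and for $2^{\prt}$ take $\xset$ to be the span of the indicators together with all constants), the whole argument amounts to translating (A1) and (A4) into the language of capacities and niveloids.

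For part b) I would argue as follows. On a linear space $\lset$, $\bset=\{\Omega,\varnothing\}$, so the only non-impossible conditioning event is $\Omega$ and axiom (A4) reduces to $\lpr(\Omega(X-\lpr(X)))=\lpr(X-\lpr(X))=0$, i.e. to translation invariance $\lpr(X+\mu)=\lpr(X)+\mu$ (taking $\mu=-\lpr(X)$, and using (A1)'s monotonicity plus translation invariance equivalence from Remark \ref{rem:2-convexity} to get the general $\mu$). Meanwhile (A1), in the form recalled in Remark \ref{rem:2-convexity}, is exactly monotonicity together with translation invariance. So the pair (A1)+(A4) on $\lset$ is equivalent to ``monotone and translation invariant'', which is precisely the definition of a (finite-valued) niveloid in Definition \ref{def:niveloid}. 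The only mild subtlety is the finiteness qualifier: a lower prevision is by our convention real-valued, whereas a niveloid may take values in $\overline{\rset}$, so the equivalence is with finite-valued niveloids, which I would simply flag.

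For part a) I would first note that a capacity $c:2^{\prt}\to[0,1]$ is, by Definition \ref{def:capacity}, monotone with $c(\varnothing)=0$ and $c(\Omega)=1$. Extending $c$ by linearity is not what is meant here; rather $\lpr$ is just a set function, i.e. a lower prevision defined on the indicators in $\dlin$ with $\bset=2^{\prt}$. Centered $2$-convexity requires additionally $0|B\in\dset$ with $\lpr(0|B)=0$ for every $B$. I would show: (i) centered $2$-convex $\Rightarrow$ capacity: by Proposition \ref{pro:C-convex_properties}a), $\lpr(A)\in[\inf A,\sup A]=[0,1]$ for every event $A$ (and $=0$ on $\varnothing$, $=1$ on $\Omega$ once we check $\lpr(\Omega)=1$, which follows from internality since $\inf\Omega=\sup\Omega=1$); monotonicity $A_1\Rightarrow A_2\Rightarrow\lpr(A_1)\le\lpr(A_2)$ is the monotonicity half of (A1) via Remark \ref{rem:2-convexity}. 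Hence $\lpr$ is a capacity. (ii) capacity $\Rightarrow$ centered $2$-convex: here I must verify (A1) and (A4) together with the centering clause. Centering: for each event $B\ne\varnothing$ I need $0|B$ in the domain with value $0$; since $0$ is a gamble in $\xset$ this is in $\dlin$ and $\lpr(0|B)=0$ must be imposed — so strictly the claim is about the capacity together with its natural centered extension, which is the convention implicit in \cite{bar09}. Given that, (A1) follows from monotonicity of $c$ plus the fact that on indicators restricted to $B$, differences $X-Y$ are again $\{-1,0,1\}$-valued, so the bound $\lpr(X|B)-\lpr(Y|B)\le\sup\{X-Y|B\}$ reduces to a finite case check using $0\le c\le 1$; and (A4) for indicators, $\lpr(A(X-\lpr(X|A\wedge B))|B)=0$ with $X\in\{0,1\}$-valued, reduces to $\lpr(0|B)=0$ type statements after observing that $A(X-\lpr(X|A\wedge B))|B$ is, on the relevant atoms, a constant gamble, so (A4) collapses to translation invariance on constants, which holds because $\lpr$ is centered and monotone.

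The main obstacle I expect is not any deep computation but rather pinning down precisely what ``$\lpr$ is a capacity'' means as a statement about a lower prevision on $\dlin$ — in particular the role of the centering clause and whether the domain is literally $2^{\prt}$ (indicators only) or the linear span. Once the domain conventions are fixed, verifying (A1) and (A4) in the capacity case is a short finite case analysis exploiting $0\le c\le1$ and the fact that restrictions of indicators to an event $B$ are again indicators, and the niveloid case is essentially immediate from the reformulation of (A1) and (A4) in Remark \ref{rem:2-convexity}. I would therefore spend most of the written proof being careful about the finite-valued qualification in b) and the centering/domain setup in a), and cite Proposition \ref{2-convexity_structure}, Proposition \ref{pro:C-convex_properties}, and Remark \ref{rem:2-convexity} for the substance.
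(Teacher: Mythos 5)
First, a remark on the comparison itself: the paper does not prove Proposition \ref{pro:cap_niv} at all — it is quoted from \cite[Section 4.1]{bar09} — so your attempt can only be judged on its own merits. Your part b) is correct and is the natural argument: a linear space $\lset$ (containing the constants) is the case $\bset=\{\Omega,\varnothing\}$ of $\dlin$, Proposition \ref{2-convexity_structure} reduces $2$-convexity to (A1) and (A4), (A1) is equivalent to monotonicity plus translation invariance (Remark \ref{rem:2-convexity}), and the single instance $A=B=\Omega$ of (A4) is then automatic from translation invariance; this gives exactly the (finite-valued) niveloid axioms.

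Part a), however, has a genuine gap in the ``capacity $\Rightarrow$ centered $2$-convex'' direction. The domain $2^{\prt}$ is not a $\dlin$ (it is not a linear space), so Proposition \ref{2-convexity_structure} cannot be invoked, and (A4) is not even a condition on a prevision defined only on $2^{\prt}$: the gambles $A(X-r)$ it involves lie outside the domain, and your claim that $A(X-\lpr(X|A\wedge B))|B$ is constant on the relevant atoms is false (for an indicator $X$ it generally takes the three values $1-\lpr$, $-\lpr$ and $0$), so ``(A4) collapses to translation invariance on constants'' does not stand. The discussion of centering is likewise a misreading: on the unconditional domain $2^{\prt}$ the only conditioning event occurring is $\Omega$, so the centering clause is exactly $\lpr(\varnothing)=0$, which is already one of the capacity axioms — no ``natural centered extension'' or extra convention is needed. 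The correct (and short) route is to check Definition \ref{def:2-convexity} directly: since all gambles in $2^{\prt}$ are conditioned on $\Omega$, the gain condition for a pair $(A_0,A_1)$ reads $\sup(I_{A_1}-I_{A_0})\geq c(A_1)-c(A_0)$, and this supremum equals $1$ unless $A_1\Rightarrow A_0$ (then use $c\leq 1$), equals $0$ when $A_1\Rightarrow A_0$ except in the extreme case (then use $1$-monotonicity), and equals $-1$ only for $A_1=\varnothing$, $A_0=\Omega$ (then use normalisation). Your converse direction of a) is fine: internality from Proposition \ref{pro:C-convex_properties}~a) gives $c(\Omega)=1$ and $0\leq c\leq 1$, centering gives $c(\varnothing)=0$, and monotonicity follows from the necessary inequality of Lemma \ref{lem:2-convex_inf_sup_bounds} (which, unlike Proposition \ref{2-convexity_structure}, holds on an arbitrary domain).
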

Hence, an unconditional $2$-convex lower prevision is equivalent to
a capacity or a niveloid, on structured sets ($2^{\prt}$ or $\lset$
respectively). On non-structured sets,ù it extends these concepts.

$2$-convex conditional lower previsions are natural candidates to define conditional capacities and niveloids
on \emph{arbitrary} sets of, respectively, conditional events or gambles.
To the best of our knowledge,
such conditional versions have not been considered yet in this general conditional environment,
but rather in more specific cases.
For instance, \cite{cha01} focuses on updating rules for `convex' capacities,
which means for $2$-monotone lower probabilities,
while considering a single conditioning event.

Thus $2$-convex previsions may provide an appropriate framework for such extensions,
guaranteeing some minimal properties like the existence of a $2$-convex natural extension (when being centered).
Take for instance centered $2$-convex conditional lower probabilities.
They satisfy the properties one would require to a conditional capacity:
$\lpr(0|B)=0$, $\lpr(\Omega|B)=1$ (this follows from Proposition \ref{pro:C-convex_properties}, a)),
and $A|B\leq_{GN}C|D$ implies $\lpr(A|B)\leq\lpr(C|D)$ (Proposition \ref{pro:C-convex_properties}, c)).
Similarly,
centered $2$-convex lower previsions ensure generalisations of properties
\eqref{eq:niveloids}
(see especially Proposition \ref{2-convexity_structure}  and Remark \ref{rem:2-convexity} for the first property,
Proposition \ref{pro:C-convex_properties}, c) for the second).

\subsection{Value-at-Risk (VaR)}
\label{sec:VaR}
Several examples of weakly consistent models may be found among the many risk measures that have been proposed in the financial literature.
We shall discuss here Value-at-Risk (VaR),
probably the most widespread such measure.

A risk measure $\rho$ is a map $\rho:\dset\rightarrow\rset$ assigning a number
$\rho(X)$ to each gamble $X\in\dset$, aiming at measuring how `risky' $X$ is.
Risk measures are strongly connected to imprecise previsions:
any risk measure $\rho(X)$ on $X$ corresponds to the opposite $-\lpr(X)$ of a lower
prevision for $X$ \cite{pel03bis}.

Because of this correspondence,
we may transpose concepts developed for imprecise probability theory to risk measurement (and vice versa).
Hence it is possible to check whether a certain risk measure is coherent, convex,
or at least $2$-coherent or $2$-convex,
according to whether the corresponding $\lpr=-\rho$ is so.

As for VaR, it is essentially a quantile-based measure:
\begin{definition}[\cite{art99}]
\label{def:VaR}
Given a gamble $X$,
a probability $P$ on $\{(X\leq x): x\in\rset\}$,
and a real $\alpha\in ]0,1[$,
\emph{the Value-at-Risk of $X$ at level $\alpha$} is:\footnote{In alternative definitions of VaR, cf. \cite[Sec. 2.3.1]{den05},
the minus in \eqref{eq:VaR} is omitted (this corresponds to
reasoning in terms of losses) and/or the strict inequality in the $\inf$ is weak.
Their consistency properties are the same.}
\begin{eqnarray}
\label{eq:VaR}
VaR_\alpha(X)=-\inf\{x\in\rset:P(X\leq x)>\alpha\}.
\end{eqnarray}
\end{definition}
It is known that VaR is not coherent,
although it may be so under some additional, rather strong assumptions
\cite{pel03bis}.
Which are then its guaranteed consistency properties?
This amounts to investigating the consistency of a lower prevision
$\lpr^{V}_\alpha(X)=-VaR_\alpha(X)$,
by the correspondence mentioned above.
The next proposition ensures that \emph{VaR is centered $2$-convex},
while Example \ref{ex:varincoh} shows that it may even fail to be $2$-coherent.
\begin{proposition}
\label{pro:VaR_2_convex}
Let $\lset$ be a linear space of gambles, $\alpha\in]0,1[$
and $\lpr$ a probability on $\bigcup_{X\in\lset}\{(X\leq x): x\in\rset\}$.
Define $\lpr^V_\alpha$ as
\begin{eqnarray*}
\label{eq:PV}
\lpr^V_\alpha(X)=-VaR_\alpha(X)=\inf\{x\in\rset:P(X\leq x)>\alpha\}, \forall X\in\lset.
\end{eqnarray*}
Then $\lpr^V_\alpha$ is a centered $2$-convex lower prevision.
\end{proposition}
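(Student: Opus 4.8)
The plan is to invoke the characterisation of unconditional $2$-convex lower previsions on a linear space, Proposition~\ref{pro:cap_niv}b): since $\lset$ is a linear space of gambles, it suffices to show that $\lpr^{V}_{\alpha}$ is a finite-valued \emph{niveloid}, i.e.\ that it is finite on $\lset$, monotone, and translation invariant (Definition~\ref{def:niveloid}). This gives $2$-convexity; the centering condition then requires only the separate check that $0\in\lset$ and $\lpr^{V}_{\alpha}(0)=0$. Equivalently, one could appeal directly to Proposition~\ref{2-convexity_structure} (viewing $\lset$ as a $\dlin$ with $\bset=\{\Omega,\varnothing\}$) together with Remark~\ref{rem:2-convexity}, since on a linear space $2$-convexity reduces to exactly monotonicity plus translation invariance.

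First I would record finiteness and centering. For a gamble $X$ set $a=\inf X$, $b=\sup X$, both finite. For $x\geq b$ the event $(X\leq x)$ equals $\Omega$, so $P(X\leq x)=1>\alpha$; for $x<a$ the event $(X\leq x)$ equals $\varnothing$, so $P(X\leq x)=0\leq\alpha$. Hence $\{x:P(X\leq x)>\alpha\}$ is nonempty and bounded below, giving $\lpr^{V}_{\alpha}(X)\in[\inf X,\sup X]\subset\rset$; in particular $\lpr^{V}_{\alpha}$ is finite-valued (consistently with Proposition~\ref{pro:C-convex_properties}a)). Taking $X=0$ yields $\lpr^{V}_{\alpha}(0)=\inf\{x:x\geq 0\}=0$, and $0\in\lset$ because $\lset$ is a linear space, so the centering condition holds.

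Next, monotonicity. If $X\geq Y$ then $(X\leq x)\subseteq(Y\leq x)$ for every $x\in\rset$, so, $P$ being monotone, $P(X\leq x)\leq P(Y\leq x)$; therefore $\{x:P(X\leq x)>\alpha\}\subseteq\{x:P(Y\leq x)>\alpha\}$, and taking infima gives $\lpr^{V}_{\alpha}(Y)\leq\lpr^{V}_{\alpha}(X)$. For translation invariance, fix $\mu\in\rset$ and note $(X+\mu\leq x)=(X\leq x-\mu)$, whence $\{x:P(X+\mu\leq x)>\alpha\}=\mu+\{y:P(X\leq y)>\alpha\}$; taking infima, $\lpr^{V}_{\alpha}(X+\mu)=\mu+\lpr^{V}_{\alpha}(X)$. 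Thus $\lpr^{V}_{\alpha}$ is a finite-valued niveloid, hence $2$-convex by Proposition~\ref{pro:cap_niv}b), and together with $\lpr^{V}_{\alpha}(0)=0$ it is centered $2$-convex.

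The only point needing care — the ``main obstacle'', such as it is — is the step $P(X\leq x)\leq P(Y\leq x)$ drawn from $(X\leq x)\subseteq(Y\leq x)$: $P$ is given merely as a probability on the (possibly non-field) class $\bigcup_{X\in\lset}\{(X\leq x):x\in\rset\}$, so monotonicity is not literally an axiom but follows from coherence of $P$, by extending it to a coherent, hence monotone, probability on the power set of the underlying partition. All remaining steps are the elementary set manipulations above; no $2$-convexity inequality need be verified by hand, since Proposition~\ref{pro:cap_niv}b) (or Proposition~\ref{2-convexity_structure}) already packages that work.
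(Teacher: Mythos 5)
Your proposal is correct and follows essentially the same route as the paper: reduce to Proposition~\ref{pro:cap_niv}b) by checking that $\lpr^{V}_{\alpha}$ is a (finite-valued) niveloid, i.e.\ monotone and translation invariant, and then verify centering via $\lpr^{V}_{\alpha}(0)=0$. The only difference is that you spell out the elementary verifications (and the finiteness and monotonicity-of-$P$ caveats) where the paper simply cites the well-known properties of $VaR_\alpha$.
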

\begin{proof}
By Proposition \ref{pro:cap_niv} b), $\lpr^V_\alpha$ is $2$-convex iff it is a niveloid,
that is iff it is translation invariant and monotone.
Proving translation invariance and monotonicity is essentially the same as proving that $VaR_\alpha$ has these properties,
which is well known (cf. \cite[Sec. 2.3.2]{den05}).

As for centering, recalling \eqref{eq:VaR} we have $\lpr^V_\alpha(0)=-VaR_\alpha(0)=0$.
\end{proof}
\begin{example}[VaR may be $2$-incoherent]
\label{ex:varincoh}
Let $X$ be a $2$-valued gamble such that
$P(X=-1)=P(X=2)=0.5$.
Given $\alpha=0.6$,
it is easy to check using \eqref{eq:VaR} that
$\lpr^V_{0.6}(X)=-VaR_{0.6}(X)=\inf\{x:P(X\leq x)>0.6\}=2$,
while 
$\lpr^V_{0.6}(-X)=1$.
Hence
$\lpr^V_{0.6}(-X)>-\lpr^V_{0.6}(X)$,
meaning that axiom (A6) with $\lambda=-1$ does not hold for $\lpr^V_{0.6}$.
Since (A6) is a necessary condition for $2$-coherence,
$\lpr^V_{0.6}$ is not $2$-coherent.
\label{ex:VaR_not_2_coherent}
\end{example}
\begin{remark}[$2$-coherent models]
\label{rem:on_2_convex_models}
The models we have seen so far are $2$-convex.
$2$-coherence arises naturally with interval evaluations made up of
a lower $\lpr$ and an upper $\upr$
uncertainty measure, both $2$-convex,
like a capacity and its conjugate.
In fact, it is then natural to require that $\lpr\leq\upr$,
which is a follow-up of equation \eqref{eq:conjugate},
implied by $2$-coherence.
As another instance, we mention $p$-boxes.
While univariate $p$-boxes satisfy stronger consistency properties
(they correspond to a couple $(\lpr,\upr)$, where both $\lpr$, $\upr$ are precise probabilities),
bivariate $p$-boxes may be related with $2$-coherence
(cf. \cite[Sec. 3.1]{pel15bis}). 
\end{remark}

\section{Weak consistency in a desirability approach}
\label{sec:weak_consistency}
In this section we examine $2$-convexity and $2$-coherence from the viewpoint of desirability.
This is an alternative approach to rationality concepts for uncertainty measures going back to \cite{wil75}
in the case of conditional imprecise previsions.
It has been recently applied to a variety of other situations,
see e.g. the discussion in \cite{qua14} and the results in \cite{qua15}.

Roughly speaking, a set $\aset$ of gambles is considered.\footnote{
As will appear later, $\aset$ is included into some fixed linear
space of gambles.
}
It is such that its gambles are regarded as
\emph{desirable} or \emph{acceptable}. We may in general be willing
to establish some \emph{rationality criteria}, requiring that certain
gambles do, or do not, belong to $\aset$. The basic problem we shall
consider here is: which is the correspondence between the
rationality criteria we adopt and the consistency concepts of
centered $2$-convexity or alternatively $2$-coherence? More
specifically, the following two questions arise:
\begin{itemize}
\item[Q1)] Which rationality criteria should be required to the elements of
a set $\aset$,
so that a conditional lower prevision $\lpr$ may be obtained from $\aset$ that is $2$-coherent
(alternatively,  $2$-convex)?
\item[Q2)] Conversely,
given a $2$-coherent (alternatively, $2$-convex) $\lpr$,
does it determine a set $\asetp$ with certain rationality properties?
\end{itemize}
In the case that $\lpr$ is coherent, the answer to Q1) and Q2) was
given by Williams in \cite{wil75}. Our approach to solving Q1) and
Q2) was largely influenced by his work. Preliminarily, some notation
must be introduced.
\begin{definition}
\label{def:desirability_sets}
Let $\xset$ be a linear space of gambles,
$\bset\subset\xset$ a set of (indicators of) events,
$\bsetp=\bset\setminus\{\varnothing\}$.
We suppose $\Omega\in\bset$ and $BX\in\xset$, $\forall B\in\bset$, $\forall X\in\xset$.\footnote{
Note that if $X\in\xset$ and $B\in\bsetp$, $X|B\in\dlin$ in the notation of the preceding sections.
}
Define then
\begin{eqnarray*}
\begin{array}{lll}
\xsetg=\{X\in\xset:\inf X\geq 0\},\\
\xsetl=\{X\in\xset:\sup X\leq 0\},
\end{array}
\end{eqnarray*}
and, $\forall B\in\bset$,
\begin{eqnarray*}
\begin{array}{lll}
\rb=\{X\in\xset:BX=X\},\\
\rbg=\{X\in\rb:\inf\{X|B\}>0\},\\
\rbl=\{X\in\rb:\sup\{X|B\}<0\}.
\end{array}
\end{eqnarray*}
If $\mathcal{S}$ and $\mathcal{T}$ are subsets of $\xset$, their
\emph{Minkowski sum} is
\begin{eqnarray*}
\mathcal{S}+\mathcal{T}=\{X+Y:X\in\mathcal{S}, Y\in\mathcal{T}\}.
\end{eqnarray*}
We shall use similar compact notation later.
For instance,
$\lambda\mathcal{S}+\mu\mathcal{T}\subseteq\mathcal{U}$, $\forall\lambda,\mu\geq 0$,
means:
$\forall X\in\mathcal{S}$, $\forall Y\in\mathcal{T}$, $\forall\lambda,\mu\geq 0$,
$\lambda X+\mu Y\in\mathcal{U}$.
\end{definition}
\begin{lemma}
\label{lem:rb_properties}
Properties of the sets $\rb$:
\begin{itemize}
\item[a)]
$\forall B$, $\rb$ is a linear space.
\item[b)]
If $X\in\rb$ and $B\Rightarrow A$, then $X\in\ra$.
\end{itemize}
\end{lemma}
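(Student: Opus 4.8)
The plan is to verify both parts directly from the defining equation $BX = X$, using only that $\xset$ is a linear space and the elementary algebra of indicator products.

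For a), I would first note that $\rb \subseteq \xset$ holds by Definition \ref{def:desirability_sets}, so it suffices to check that $\rb$ is closed under linear combinations. Since $B\cdot 0 = 0$, we have $0 \in \rb$; and for $X, Y \in \rb$ and $\lambda, \mu \in \rset$, the combination $\lambda X + \mu Y$ lies in $\xset$ because $\xset$ is a linear space, while $B(\lambda X + \mu Y) = \lambda (BX) + \mu (BY) = \lambda X + \mu Y$ since multiplication by the indicator $B$ is linear. Hence $\lambda X + \mu Y \in \rb$, so $\rb$ is a linear subspace of $\xset$.

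For b), the point is to translate $B \Rightarrow A$ into the indicator identity $AB = B$. Then, for $X \in \rb$, i.e. $BX = X$, I would compute $AX = A(BX) = (AB)X = BX = X$, which is exactly the defining condition of $\ra$; together with $X \in \rb \subseteq \xset$ this gives $X \in \ra$.

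I do not expect any real obstacle: both statements reduce to one-line computations once one uses the linearity of indicator multiplication and the identity $AB = B$. The only step worth stating explicitly is this last identity, i.e. that $B \Rightarrow A$ forces $I_B I_A = I_B$, since it is what makes the chain of equalities in part b) collapse to $AX = X$.
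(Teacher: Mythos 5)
Your proof is correct and follows essentially the same elementary route as the paper: part a) is the routine check that the defining condition $BX=X$ is preserved under linear combinations (the paper simply calls this trivial), and part b) is a one-line indicator computation. The only cosmetic difference is that you use the absorption identity $I_AI_B=I_B$ coming from $B\Rightarrow A$, whereas the paper writes $I_A=I_B+I_{A\wedge\nega{A}\text{-type decomposition}}$, i.e. $I_A=I_B+I_{A\wedge\nega{B}}$ and kills the second term against $X=I_BX$; the two computations are interchangeable.
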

\begin{proof}
a) is trivial.
As for b),
we have $AX=I_A X=(I_B + I_{A\wedge\nega{B}})X=I_B X=X$.
\end{proof}

\subsection{Desirability axioms for $2$-coherent previsions}
\label{sec:desirability_2_coherence}
The following proposition answers question Q1) completely for $2$-coherence:
\begin{proposition}
\label{pro:lower_from_set}
Let $\aset\subseteq\xset$ be such that
\begin{itemize}
\item[a)] $\lambda\aset+\rbg\subseteq\aset$, $\forall\lambda\geq 0$, $\forall B\in\bset$;
\item[b)] $\rbl\cap\aset=\emptyset$, $\forall B\in\bset$.
\item[c)]
$(\rbu\cap\aset)+(\rbd\cap\aset)\subseteq\\
\hspace{1.00cm}\rbud\setminus\rbud^{\prec}, \forall B_1, B_2\in\bset.$
\end{itemize}
Define, $\forall X|B\in\dlin$,
\begin{eqnarray}
\label{eq:lower_from_set}
\lpr(X|B)=&\sup\{x\in\rset:B(X-x)\in\aset\}.
\end{eqnarray}
Then, $\lpr$ is $2$-coherent on $\dlin$.
\end{proposition}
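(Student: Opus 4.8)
The strategy is to verify that the lower prevision $\lpr$ defined by \eqref{eq:lower_from_set} satisfies the four axioms (A1), (A2), (A4), (A6) that characterise $2$-coherence on $\dlin$ by Proposition \ref{pro:conditional 2-convex prevision on linear space}. Before doing so, I would first establish that $\lpr$ is well-defined and real-valued: finiteness above follows from axiom b) (for large enough $x$, $B(X-x)\in\rbl$, so it cannot lie in $\aset$, bounding the set of admissible $x$), while non-emptiness / finiteness below follows from axiom a) applied with $\lambda=0$, which forces $\rbg\subseteq\aset$, so for sufficiently small $x$ we have $B(X-x)\in\rbg\subseteq\aset$.

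The core of the proof is then the axiom-by-axiom check. For (A2), positive homogeneity with $\lambda>0$: from $B(X-x)\in\aset$ one gets $B(\lambda X-\lambda x)=\lambda B(X-x)\in\lambda\aset+\rbg\subseteq\aset$ (taking the $\rbg$-element to be $0$ — note $0\in\rbg$ requires care, so more precisely use that $\lambda\aset\subseteq\aset$ is a consequence of a) with $B=\Omega$ and approximating), giving $\lpr(\lambda X|B)\geq\lambda\lpr(X|B)$; the reverse inequality comes by applying the same to $1/\lambda$, and the case $\lambda=0$ is handled separately. For (A1), translation invariance is immediate from the form of \eqref{eq:lower_from_set}, and monotonicity should follow by noting that if $X|B\geq Y|B$ then $B(X-x)\geq B(Y-x)$ pointwise, so $B(X-x)-B(Y-x)\in\rbg\cup\{0\}$, hence $B(Y-x)\in\aset$ implies $B(X-x)\in\aset$ by a); by Remark \ref{rem:2-convexity} monotonicity plus translation invariance gives (A1). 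For (A6), $\lpr(\lambda X|B)\leq\lambda\lpr(X|B)$ with $\lambda<0$: this is the point where axiom c) enters — I expect one writes $\lambda X = -|\lambda| X$ and uses that a suitable sum of an element of $\rbg$-type built from $X$ and one built from $-X$ must avoid $\rbud^{\prec}$, forcing the relevant strict inequality between the two suprema; this rules out $\lpr(X|B)+\lpr(-X|B)>0$, equivalently $\upr(X|B)\geq\lpr(X|B)$, from which (A6) follows via (A2) exactly as in the proof of Proposition \ref{pro:conditional 2-convex prevision on linear space}.

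For (A4) — the Generalised Bayes Rule — I would show $\lpr(A(X-\lpr(X|A\wedge B))|B)=0$ by proving both inequalities. The $\leq 0$ direction: if $\lpr(X|A\wedge B)$ is, up to $\ep$, the sup of $x$ with $(A\wedge B)(X-x)\in\aset$, one must show $B(A(X-r))$ for $r$ slightly above $\lpr(X|A\wedge B)$ is \emph{not} in $\aset$, using axiom b) together with the observation that on $A\wedge B$ the gamble $A(X-r)$ agrees with $(A\wedge B)(X-r)$ which sits in $\rbl$ (for that smaller conditioning event), while on $\nega{A}\wedge B$ it vanishes; here Lemma \ref{lem:rb_properties} and a restriction argument relating $\mathcal{R}(B)$ and $\mathcal{R}(A\wedge B)$ are the technical tools. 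The $\geq 0$ direction is symmetric, using $r$ slightly below $\lpr(X|A\wedge B)$ and axiom a).

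\textbf{Main obstacle.} I expect the genuinely delicate step to be (A6), since it is the only axiom that forces the use of the three-event condition c) with its exclusion of $\rbud^{\prec}$; getting the bookkeeping right between the sets $\mathcal{R}(B_1)$, $\mathcal{R}(B_2)$ and $\mathcal{R}(B_1\vee B_2)$, and extracting a \emph{strict} inequality from the mere non-membership in $\rbud^{\prec}$, is where the argument is least routine. The GBR step (A4) is also somewhat intricate because it requires carefully relating desirability on $B$ to desirability on the finer event $A\wedge B$, but it parallels the standard coherent-case argument of Williams closely enough that it should go through with the lemmas already in hand.
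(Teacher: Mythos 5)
Your route is genuinely different from the paper's: the paper never goes through the axiomatic characterisation, but checks Definition \ref{def:cond_2-coherence} directly, building for arbitrary $X_0|B_0,X_1|B_1$ and stakes $s_1\geq 0$, $s_0\in\rset$ the perturbed gambles $S_i=|s_i|B_i(X_i-\lpr(X_i|B_i))+\frac{s_i\ep}{|s_i|K}B_i$, showing $S_i\in\aset$ when $s_i>0$ and $S_0\notin\aset$ when $s_0<0$, and then using a)--c) case by case on the signs of $s_0,s_1$ before letting $\ep\to 0$. Your plan of verifying (A1), (A2), (A4), (A6) and invoking Proposition \ref{pro:conditional 2-convex prevision on linear space} is legitimate in principle, and your (A1), (A2), (A6) steps are essentially workable, with small patches: monotonicity does not follow from ``$B(X-x)-B(Y-x)\in\rbg\cup\{0\}$'' (the difference may have infimum $0$ on $B$ without being $0$), and $\lambda\aset\subseteq\aset$ is not a consequence of a); both are fixed by adding $\delta B\in\mathcal{R}(B)^{\succ}$ and letting $\delta\to 0$, as you partly anticipate. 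Your worry about (A6) is unfounded: with $B_1=B_2=B$, if $\lpr(X|B)+\lpr(-X|B)>0$ one picks $x<\lpr(X|B)$, $y<\lpr(-X|B)$ with $x+y>0$, and then $B(X-x)+B(-X-y)=-(x+y)B$ lies in $\mathcal{R}(B)^{\prec}$, contradicting c) outright; no extraction of a strict inequality is needed.

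The genuine gap is in your (A4) step, in two respects. First, you analyse the wrong gamble: writing $c=\lpr(X|A\wedge B)$, what must be shown for the ``$\leq 0$'' half is that $B\bigl(A(X-c)-t\bigr)=AB(X-c)-tB\notin\aset$ for every $t>0$; this gamble equals $-t$ (not $0$) on $\nega{A}\wedge B$, whereas you work with $B(A(X-r))=AB(X-r)$, whose non-membership in $\aset$ for $r>c$ is immediate from the definition of $c$ as a supremum but does not yield (A4). Second, the proposed justification is false: for $r$ slightly above $c$ the gamble $(A\wedge B)(X-r)$ is in general \emph{not} in $\mathcal{R}(A\wedge B)^{\prec}$ --- that would require $r>\sup(X|A\wedge B)$, while $c$ may lie well below $\sup(X|A\wedge B)$ --- so axiom b) cannot be invoked there. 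The step can be repaired using a) alone: if $AB(X-c)-tB\in\aset$ for some $t>0$, add $t\,\nega{A}B+\ep AB\in\mathcal{R}(B)^{\succ}$ (with $0<\ep<t$; this gamble is in $\xset$ by linearity and stability by restriction) via a) with $\lambda=1$ to obtain $AB\bigl(X-(c+t-\ep)\bigr)\in\aset$ with $c+t-\ep>c$, contradicting the definition of $c$; the ``$\geq 0$'' half follows similarly from a) by approximating the supremum and adding a suitable element of $\mathcal{R}(B)^{\succ}$. With this correction and the minor patches above your axiomatic proof goes through, but as written the (A4) argument would fail.
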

\begin{proof}
The core idea of the proof is to show that,
for any given $X_0|B_0, X_1|B_1\in\dlin$,
$\forall s_1\geq 0$, $\forall s_0\in\rset$,
$\lpr$ defined by \eqref{eq:lower_from_set} is such that $\LGD$
satisfies condition \eqref{eq:cond_2-coherence} in Definition \ref{def:cond_2-coherence},
and therefore $\lpr$ is $2$-coherent.

For this, define first the following gambles:
\begin{eqnarray}
\label{eq:k}
K=\sup\Biggl(\sum_{\substack{i=0 \\ s_i\neq 0}}^{1} B_i|\Ss\Biggr),
\end{eqnarray}
\begin{eqnarray}
\label{eq:s}
S_i=\left\{
    \begin{array}{ll}
     |s_i| B_i(X_i-\lpr(X_i|B_i))+\frac{s_i \epsilon}{|s_i| K} B_i \mbox{ if } s_i\neq 0\\
0 \hfill \mbox{ if } s_i=0
\end{array}
\right..
\end{eqnarray}
In equation \eqref{eq:s}, $i=0,1$ and $\epsilon>0$.

Note that $K>0$, as it can take values in $\{1,2\}$.
We analyse now the relationships between $S_0$, $S_1$ in \eqref{eq:s} and $\aset$.
The following facts will be used later in the proof,
when expressing $\LGD$ in terms of $S_0$, $S_1$ and $K$.
\begin{itemize}
\item[i)]If $s_i>0$,
$S_i=s_i B_i(X_i-\lpr(X_i|B_i))+\frac{\epsilon}{K} B_i \in\aset$, $i=0,1$.

In fact,
by the definition of $\lpr$ in \eqref{eq:lower_from_set},
$\exists t\in [0,\frac{\epsilon}{s_i K}[$ such that
$B_i (X_i-(\lpr(X_i|B_i)-t))\in\aset$.

Writing then
\begin{eqnarray*}
\begin{array}{ll}
     S_i=s_i B_i(X_i-(\lpr(X_i|B_i)-\frac{\epsilon}{s_i K}))= \\
     s_i B_i(X_i - (\lpr(X_i|B_i)-t))+s_i B_i (\frac{\epsilon}{s_i K}-t),
\end{array}
\end{eqnarray*}
we note that the second term in the summation,
$s_i B_i (\frac{\epsilon}{s_i K}-t)$,
belongs to $\rbgi$.
Since the first term is in $\aset$,
$S_i\in\aset$ by assumption a).
\item[ii)]
If $s_o<0$,
$S_0=-s_0 B_0 (X_0-\lpr(X_0|B_0))-\frac{\epsilon}{K}B_0\notin\aset$.

Suppose by contradiction
$S_0=-s_0 B_0 (X_0-(\lpr(X_0|B_0)+\frac{\epsilon}{-s_0 K}))\in\aset$.
Since $\delta B_0\in{\mathcal{R}(B_{0})^{\succ}}$, $\forall\delta>0$,
assumption a) gives
\begin{eqnarray}
\label{eq:s0_with_delta}
\frac{S_0}{-s_0}+\delta B_0=B_0(X_0-(\lpr(X_0|B_0)+\frac{\epsilon}{-s_0 K}-\delta))\in\aset.
\end{eqnarray}
Taking $0<\delta<\frac{\epsilon}{-s_0 K}$,
it is $\lpr(X_0|B_0)+\frac{\epsilon}{-s_0 K}-\delta>\lpr(X_0|B_0)$,
so that \eqref{eq:s0_with_delta} contradicts the definition of $\lpr$ in \eqref{eq:lower_from_set}.
\end{itemize}
The gain $\LGD$ is a function of the gambles $S_0$, $S_1$ and $K$:
\begin{eqnarray}
\label{eq:lgd}
\LGD=\left\{
    \begin{array}{ll}
       S_1+S_0-\frac{\epsilon}{K}\sum_{\substack{i=0 \\ s_i\neq 0}}^{1} B_i \mbox{ if } s_0\geq 0\\
       S_1-S_0-\frac{\epsilon}{K}\sum_{\substack{i=0 \\ s_i\neq 0}}^{1} B_i \mbox{ if } s_0\leq 0.
\end{array}
\right.
\end{eqnarray}
Referring to this representation,
define $T=S_1+S_0$ if $s_0\geq 0$, $T=S_1-S_0$, if $s_0\leq 0$.
We prove that $\sup(T|\Ss)\geq 0$,
distinguishing three cases:
\begin{itemize}
\item[$\bullet$]
$s_1>0$, $s_0>0$.

By i), $S_i\in\aset$, $i=0,1$.
It is also $S_i\in\rbi$,
hence $S_i\in\rbi\cap\aset$, $i=0,1$.
Using assumption c) we deduce
$S_1+S_0\in{\mathcal{R}(B_{0}\vee B_{1})}\setminus{\mathcal{R}(B_{0}\vee B_{1})^{\prec}}$,
which means that $\sup(S_1+S_0|\Ss)\geq 0$.
\item[$\bullet$]
$s_0<0$.

By contradiction,
suppose $\sup(S_1-S_0|\Ss)=\sup(T|\Ss)<0$, and hence $\inf(-T|\Ss)>0$.
If $s_1=0$, this means $-T=S_0\in{\mathcal{R}(B_{0})^{\succ}}$.
If $s_1>0$,
since then $-T\in{\mathcal{R}(B_{0}\vee B_{1})}$ by Lemma \ref{lem:rb_properties},
$\inf(-T|\Ss)>0$ implies $-T\in{\mathcal{R}(B_{0}\vee B_{1})^{\succ}}$.
In both instances,
assumption a) can be applied (with $\lambda = 0$ if $s_1=0$,
recalling instead that
$S_1\in\aset$ by i) if $s_1>0$)
to deduce $S_0\in\aset$,
which contradicts ii).
%
%
%
\item[$\bullet$]
$s_1>0$, $s_0=0$ or $s_1=0$, $s_0>0$.

If $s_1>0$, $s_0=0$,
then using i) $T=S_1\in\aset\cap\rbu$.
By assumption b),
$\sup(T|\Ss)=\sup(S_1|B_1)\geq 0$.

If $s_1=0$, $s_0>0$,
the argument is analogous.

\end{itemize}
Recalling \eqref{eq:lgd},
$T=\LGD+\frac{\epsilon}{K}\sum_{\substack{i=0 \\ s_i\neq 0}}^{1}B_i$.
Since $\sup(T|\Ss)\geq 0$ and using the definition of $K$ at the next equality,
we obtain
\begin{eqnarray*}
\label{eq:proof_1_1}
\begin{array}{lll}
0\leq\sup(\LGD+\frac{\epsilon}{K}\sum_{\substack{i=0 \\ s_i\neq 0}}^{1}B_i|\Ss) \\
\leq\sup(\LGD|\Ss)+\frac{\epsilon}{K}\sup(\sum_{\substack{i=0 \\ s_i\neq 0}}^{1}B_i|\Ss)\\
=\sup(\LGD|\Ss)+\epsilon.
\end{array}
\end{eqnarray*}
Thus,
$\sup(\LGD|\Ss)\geq -\epsilon$, $\forall\epsilon>0$,
that is $\sup(\LGD|\Ss)\geq 0$.
\end{proof}
Unlike the case of coherent conditional lower previsions examined in \cite[Section 3.1]{wil75},
$\aset$ does not need to be a cone in Proposition~\ref{pro:lower_from_set}:
given $X, Y\in\aset$, $\lambda\geq 0$, neither $X+Y$ nor $\lambda X$ are guaranteed to belong to $\aset$.
Actually, condition a) represents a weakening of the cone axioms:
if $X\in\aset$,
$Y\in\rbg$ and $\lambda\geq 0$, then $\lambda X+Y\in\aset$.
This implies also
$\rbg\subseteq\aset\ \forall B\in\bset$,
a condition that, like also b), is required for coherence as well
(see (C1'), (C2') in \cite[Section 3.1]{wil75}).

The interpretation of b) is that of an \emph{avoiding partial loss}
condition:
we can expect no gain from owning a gamble in $\rbl$,
when $B$ is true,
therefore such gambles cannot be included into $\aset$.

As for c), writing it in an extended form,
it tells us that:
if $X_1, X_2\in\aset$, $B_1 X_1=X_1$, $B_2 X_2=X_2$,
then
$(B_1\vee B_2)(X_1 + X_2)=X_1 + X_2$
and $\sup(X_1 + X_2|B_1\vee B_2)\geq 0$.
Note that if $X_1\in\rbu$ and $X_2\in\rbd$,
it always holds that $X_1+X_2\in\rbud$
by Lemma \ref{lem:rb_properties},
without having to impose it by means of axiom c).

Therefore,
the essential condition in axiom c) is that
if $X_1$, $X_2$ are desirable (belonging to $\aset$),
this does not imply that $X_1 + X_2\in\aset$
(which is required for coherence in \cite{wil75, wil07}),
but only that $X_1 + X_2$ is not necessarily discarded
by resorting to b) with $B=B_1\vee B_2$.
To illustrate this concept, let for instance $B_1=B_2=\Omega$ in c),
so that $\rbu=\rbd=\rbud=\mathcal{R}(\Omega)=\xset$.
Then, c) implies $X_1+X_2\notin{\mathcal{R}(\Omega)}^{\prec}$,
making impossible to apply b) in order to discard $X_1+X_2$ from $\aset$.

As for question Q2),
an answer is given by the following proposition, when $\lpr$ is $2$-coherent.

\begin{proposition}
\label{pro:set_from_lower}
Let $\lpr:\dlin\rightarrow\rset$ be $2$-coherent.
Define
\begin{eqnarray*}
\label{eq:set_from_lower}
\begin{array}{lll}
\asetp=\{\lambda B(X-x)+Y:X|B\in\dlin, x<\lpr(X|B), Y\in\xsetg,\lambda\geq 0\}.
\end{array}
\end{eqnarray*}
Then the set $\asetp$ is such that:
\begin{itemize}
\item[a')] $a\asetp+\xsetg\subseteq\asetp$, $\forall a\geq 0$;
\item[b')] $\xsetl\cap\asetp=\{0\}$;
\item[c')] $(\asetp + \asetp)\setminus\{0\}\subseteq\xset\setminus\xsetl$;
\item[d')] $\lpr(X|B)=\sup\{x\in\rset:B(X-x)\in\asetp\}$, $\forall X|B\in\dlin$.
\end{itemize}
\end{proposition}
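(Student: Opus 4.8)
The plan is to verify each of the four properties (a'), (b'), (c'), (d') for the set $\asetp$ directly from its definition and the $2$-coherence of $\lpr$. Throughout I will use that $2$-coherence on $\dlin$ is equivalent to axioms (A1), (A2), (A4), (A6) (Proposition~\ref{pro:conditional 2-convex prevision on linear space}), together with the internality-type bounds $\inf(X|B)\leq\lpr(X|B)\leq\sup(X|B)$ coming from Lemma~\ref{lem:2-convex_inf_sup_bounds} applied with $Y|B$ replaced by a constant, or more directly from $1$-coherence plus centering implied by (A2).

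First I would prove (a'): take a generic element $\lambda B(X-x)+Y\in\asetp$ (so $x<\lpr(X|B)$, $Y\in\xsetg$, $\lambda\geq 0$), multiply by $a\geq 0$, and add an arbitrary $Z\in\xsetg$. Since $a\lambda\geq 0$ and $aY+Z\in\xsetg$ (because $\xsetg$ is a convex cone, being the set of gambles with nonnegative infimum), the result $a\lambda B(X-x)+(aY+Z)$ is again of the required form, hence in $\asetp$. Next, (b'): clearly $0\in\asetp$ (take $\lambda=0$, $Y=0$) and $0\in\xsetl$. Conversely, suppose $W=\lambda B(X-x)+Y\in\xsetl$, i.e.\ $\sup W\leq 0$. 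On $\nega{B}$ we have $W=Y\geq\inf Y\geq 0$, forcing $Y(\omega)=0$ for $\omega\in\nega{B}$ and $\inf Y=0$; if $\lambda=0$ then $W=Y\in\xsetg\cap\xsetl$, so $\sup Y\leq 0\leq\inf Y$ gives $Y\equiv 0$ hence $W=0$. If $\lambda>0$, then on $B$, $\sup W\geq\lambda(\sup(X|B)-x)+0\geq\lambda(\lpr(X|B)-x)>0$ using internality $\sup(X|B)\geq\lpr(X|B)$ and $x<\lpr(X|B)$ — contradiction. So $\xsetl\cap\asetp=\{0\}$.

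For (c') I would take two elements $W_1=\lambda_1 B_1(X_1-x_1)+Y_1$ and $W_2=\lambda_2 B_2(X_2-x_2)+Y_2$ of $\asetp$ with $W_1+W_2\neq 0$, and show $\sup(W_1+W_2)>0$ — equivalently that $W_1+W_2\notin\xsetl$. The key computation uses (A4)/GBR: for each $i$ with $\lambda_i>0$, since $x_i<\lpr(X_i|B_i)$ we have $\lpr(B_i(X_i-x_i))>0$ by combining (A4) (applied with the conditioning event $\Omega$, giving $\lpr(B_i(X_i-\lpr(X_i|B_i)))=0$) with (A1) and the translation/monotonicity consequences in Remark~\ref{rem:2-convexity}; more precisely $\lpr(B_i(X_i-x_i))=\lpr(B_i(X_i-\lpr(X_i|B_i))+B_i(\lpr(X_i|B_i)-x_i))\geq 0+0=0$, and in fact strictly positive supremum. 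Then $\sup(W_1+W_2)\geq\lpr(\lambda_1 B_1(X_1-x_1)+\lambda_2 B_2(X_2-x_2)+Y_1+Y_2)\geq\lambda_1\lpr(B_1(X_1-x_1))+\lambda_2\lpr(B_2(X_2-x_2))+\inf(Y_1+Y_2)\geq 0$ by superadditivity — but here is the subtlety: $2$-coherence does \emph{not} give superadditivity (A3). So instead I would argue case-by-case on whether $\lambda_1,\lambda_2$ vanish, reducing each case to a single $2$-coherent gain inequality of the form $\sup(\LGD|\Ss)\geq 0$ with at most two elementary gains, exactly as in the proof of Proposition~\ref{pro:lower_from_set}; when both $\lambda_i=0$ the sum is $Y_1+Y_2\in\xsetg$ with $\sup\geq\inf\geq 0$, and equals $0$ only in the excluded case. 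This case analysis, mirroring the three-case split ($s_1,s_0>0$; one of them $0$; the contradiction case) in the proof of Proposition~\ref{pro:lower_from_set}, is where the real work lies.

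Finally, (d'): fix $X|B\in\dlin$. If $x<\lpr(X|B)$ then $B(X-x)=1\cdot B(X-x)+0\in\asetp$ (take $\lambda=1$, $Y=0$), so $\sup\{x:B(X-x)\in\asetp\}\geq\lpr(X|B)$. For the reverse, suppose $B(X-x)\in\asetp$ with $x>\lpr(X|B)$; writing $B(X-x)=\lambda B'(X'-x')+Y$ and conditioning on $B$, one shows via (b')-type reasoning and the internality bound $\lpr(X|B)\geq\inf(X|B)$ that this forces a gamble in $\rb^{\prec}$-like behaviour contradicting $2$-coherence — concretely, $\lpr(B(X-x))$ would have to be $\geq 0$ by the cone-closure (a') argument applied to recover that $B(X-x')\in\asetp$ for values $x'$ approaching $x$, yet (A4) plus (A1) give $\lpr(B(X-x))=\lpr(B(X-\lpr(X|B)))+ (\lpr(X|B)-x)<0$, a contradiction. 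Hence $\sup\{x:B(X-x)\in\asetp\}\leq\lpr(X|B)$, and equality follows. The main obstacle, as flagged, is (c'): without (A3) available, superadditivity cannot be invoked, so the proof must be organized around the two-gamble gain inequalities that $2$-coherence does provide, carefully handling the degenerate stakes.
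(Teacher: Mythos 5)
Your parts a') and b') are fine and essentially coincide with the paper's argument, but there is a genuine gap in d'), in the hard direction $\sup\{x:B(X-x)\in\asetp\}\leq\lpr(X|B)$. Your argument rests on two claims that $2$-coherence does not support: (i) that $B(X-x)\in\asetp$ forces $\lpr(B(X-x))\geq 0$ --- membership in $\asetp$ only means $B(X-x)$ admits \emph{some} representation $\lambda A(Z-z)+Y$ with $z<\lpr(Z|A)$, $Y\in\xsetg$, where the conditioning event $A$ may differ from $B$, and turning such a representation into a sign statement about $\lpr$ at that gamble would need superadditivity-type reasoning that is exactly what (A3)-free $2$-coherence withholds; and (ii) the identity $\lpr(B(X-x))=\lpr(B(X-\lpr(X|B)))+(\lpr(X|B)-x)$, which misapplies translation invariance: the added term is $(\lpr(X|B)-x)B$, not a constant unless $B=\Omega$, and (A1) only gives $\lpr(B(X-x))\leq 0$, not the strict negativity you invoke. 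The paper's proof instead takes an arbitrary representation $B(X-\overline{x})=\lambda A(Z-z)+Y$, deduces $\sup\bigl(\lambda A(Z-z)-B(X-\overline{x})\bigr)\leq 0$, applies the two-gamble $2$-coherent gain inequality to $\lambda A(Z-\lpr(Z|A))-B(X-\lpr(X|B))$ conditional on $A\vee B$, and then analyses the (at most) three-valued gamble $X_1=\lambda A(z-\lpr(Z|A))-B(\overline{x}-\lpr(X|B))$ on $A\vee B$, whose value on $A\wedge\nega{B}$ is strictly negative, to conclude $\overline{x}\leq\lpr(X|B)$. Your sketch never engages with this cross-conditioning case $A\neq B$, which is where the real work lies.

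For c') you correctly flag that (A3) is unavailable, and the remedy you gesture at is the right one: since $s_0$ in Definition \ref{def:cond_2-coherence} may be negative, the sum $\lambda_1 B_1(X_1-\lpr(X_1|B_1))+\lambda_2 B_2(X_2-\lpr(X_2|B_2))$ is itself an admissible $\LGD$, so its supremum conditional on $\slambda=\vee\{B_i:\lambda_i>0,\ i=1,2\}$ is nonnegative; the paper then bounds $Z_1+Z_2$ from below on $\slambda$ by this gain plus the strictly positive margin $\min\sum_{i=1}^{2}\lambda_i B_i(\lpr(X_i|B_i)-x_i)|\slambda$ plus $\inf Y_1+\inf Y_2$, yielding $\sup(Z_1+Z_2)>0$. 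You leave this main case to an appeal to ``the proof of Proposition \ref{pro:lower_from_set}'', but that proof runs in the opposite direction (from the axioms on $\aset$ to $2$-coherence of the derived $\lpr$, using assumptions a)--c) on $\aset$ rather than gain inequalities for a given $\lpr$), so it cannot simply be mirrored; as written, the non-degenerate case of c') is asserted rather than proved. In summary: a'), b') are correct and match the paper; c') is incomplete in its central case; d') contains an unjustified step that would need repair along the lines above.
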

\begin{proof}
\emph{Proof of a').}
Take $Z_1=\lambda B (X-x)+Y\in\asetp$,
$a\geq 0$ and $Z_2\in\xsetg$.
Then,
$aZ_1 + Z_2=a\lambda B(X-x)+aY+Z_2$,
with $a\lambda\geq 0$, $X\in\xset$, $B\in\bsetp$,
$x\leq\lpr(X|B)$ and $aY+Z_2\in\xsetg$
(because $\inf(aY+Z_2)\geq a\inf Y+\inf Z_2\geq 0$).
Therefore $aZ_1 + Z_2\in\asetp$.

\emph{Proof of b').}
Take $Z=\lambda B(X-x)+Y\in\asetp$.

\emph{If} $\lambda=0$,
$Z=Y\in\xsetg$.
Therefore $\sup Z\geq\inf Z\geq 0$,
so that $Z\notin\xsetl$ if $Y\neq 0$,
while $Z\in\xsetl$ if and only if $Y=0$.

\emph{If} $\lambda>0$,
$Z\geq\lambda B(X-x)$
because $Y\geq\inf Y\geq 0$.
It follows that
\begin{eqnarray*}
\label{eq:sequence_lambda}
\begin{array}{lll}
\sup (Z|B)\geq \sup(\lambda B(X-x)|B)=\\
\sup(\lambda B(X-\lpr(X|B))+\lambda B(\lpr(X|B)-x)|B)\geq\\
\sup(\lambda B(X-\lpr(X|B))|B)+\inf(\lambda B(\lpr(X|B)-x)|B)>0
\end{array}
\end{eqnarray*}
using the property
\begin{eqnarray}
\label{eq:sup_notless_inf_sup}
\sup(X_1 + X_2)\geq\inf X_1 + \sup X_2
\end{eqnarray}
with
$X_1=\lambda B(\lpr(X|B)-x)$,
$X_2=\lambda B(X-\lpr(X|B))$
at the second inequality;
the final inequality follows from
$\sup(\lambda B(X-\lpr(X|B))|B)\geq 0$
by $2$-coherence of $\lpr$
(equation \eqref{eq:cond_2-coherence} with $s_0=0$)
and from $\inf(\lambda B(\lpr(X|B)-x)|B)>0$ since $\lambda>0$,
$\lpr(X|B)>x$.

The above derivation ensures $\sup Z\geq \sup Z|B>0$,
i.e. $Z\notin\xsetl$.

Whatever is $\lambda\geq 0$ then,
$Z\in\asetp$ implies either $Z\notin\xsetl$ or $Z=0$.
Therefore,
since $0\in\asetp\cap\xsetl$, we have that $\asetp\cap\xsetl=\{0\}$.

\emph{Proof of c')}
To establish c'),
we prove that for any $Z_1, Z_2 \in\asetp$,
$Z_1 + Z_2 \neq 0$,
it holds that $\sup(Z_1 + Z_2)>0$.

From the definition of $\asetp$,
we have that
\begin{eqnarray*}
Z_i=\lambda_i B_i (X_i - x_i)+Y_i\ (i=1,2).
\end{eqnarray*}
If $\lambda_1=\lambda_2=0$,
then $Z_i=Y_i \in\xsetg$, $i=1,2$,
and at least one of $Y_1$, $Y_2$ is not zero
(because $Z_1+Z_2\neq 0$).
If for instance $Y_1\neq 0$,
then $\sup Y_1>0$ (because $Y_1\in\xsetg$).
It follows that $\sup(Z_1+Z_2)=\sup(Y_1+Y_2)\geq\sup Y_1+\inf Y_2\geq\sup Y_1>0$,
where the first inequality follows from \eqref{eq:sup_notless_inf_sup}.

We may therefore suppose $\lambda_1+\lambda_2>0$ in the sequel of the proof,
defining $\slambda=\vee\{B_i:\lambda_i>0, i=1,2\}(\neq\varnothing)$.
We have that
\begin{eqnarray*}
\label{eq:seq_sl}
\begin{array}{ll}
Z_1+Z_2|\slambda&=\sum_{i=1}^{2}\lambda_i B_i (X_i-x_i)|\slambda+(Y_1+Y_2)|\slambda \\
&=\sum_{i=1}^{2}\lambda_i B_i (X_i - \lpr(X_i|B_i))|\slambda \\
&+\sum_{i=1}^{2}\lambda_i B_i (\lpr(X_i|B_i)-x_i)|\slambda+(Y_1 + Y_2)|\slambda \\
&\geq\sum_{i=1}^{2}\lambda_i B_i (X_i-\lpr(X_i|B_i))|\slambda + \delta + \inf Y_1 + \inf Y_2,
\end{array}
\end{eqnarray*}
where $\delta=\min\sum_{i=1}^{2}\lambda_i B_i(\lpr(X_i|B_i)-x_i)|\slambda$.
Recalling that $\lambda_i\geq 0$,
$\lpr(X_i|B_i)\\ >x_i$, $\lambda_1+\lambda_2>0$,
it is easy to realise that $\delta>0$.

Using this fact in the strict inequality of the following derivation,
we obtain
\begin{eqnarray*}
\label{eq:seq_two_z}
\begin{array}{ll}
\sup(Z_1+Z_2)\geq\sup(Z_1+Z_2|\slambda)\geq \\
\sup(\sum_{i=1}^{2}\lambda_i B_i (X_i-\lpr(X_i|B_i))|\slambda) + \delta + \inf Y_1 + \inf Y_2> \\
\sup(\sum_{i=1}^{2}\lambda_i B_i (X_i-\lpr(X_i|B_i))|\slambda)\geq 0,
\end{array}
\end{eqnarray*}
the final inequality holding because $\lpr$ is $2$-coherent.

\emph{Proof of d')}
Let $S=\{x: B(X-x)\in\asetp\}$.

In the first (and larger) part of the proof,
we shall prove that
\begin{eqnarray}
\label{eq:Xgx}
\lpr(X|B)\geq x, \forall x\in S.
\end{eqnarray}
For this, let $\overline{x}\in S$.
Therefore,
\begin{eqnarray}
\label{eq:EXl}
B(X-\overline{x})=\lambda A(Z-z)+Y,
\end{eqnarray}
with $\lambda\geq 0$, $A\in\bsetp$, $Z\in\xset$, $z<\lpr(Z|A)$, $Y\in\xsetg$.
We distinguish the cases $\lambda = 0$ and $\lambda>0$.
\begin{itemize}
\item[$\bullet$]
$\lambda = 0$.

From $\inf(X|B)-\overline{x}=\inf (B(X-\overline{x})|B)\geq\inf(B(X-\overline{x}))=
\inf Y\geq 0$
(the last inequality holding because $Y\in\xsetg$),
we obtain $\overline{x}\leq\inf(X|B)$.
Therefore also $\overline{x}\leq\lpr(X|B)$,
because by $2$-coherence $\inf(X|B)\leq\lpr(X|B)$.
(Actually,
centered $2$-convexity is enough for this, by Proposition \ref{pro:C-convex_properties}), a).)
\item[$\bullet$]
$\lambda>0$.

From \eqref{eq:EXl},
$B(X-\overline{x})\geq\lambda A(Z-z)$,
hence
\begin{eqnarray}
\label{eq:supAZ}
\sup(\lambda A(Z-z)-B(X-\overline{x}))\leq 0.
\end{eqnarray}
Define now
\begin{eqnarray*}
\label{eq:X1_X2}
\begin{array}{ll}
X_1&=\lambda A(z-\lpr(Z|A))-B(\overline{x}-\lpr(X|B)) \\
X_2&=\lambda A(Z-\lpr(Z|A))-B(X-\lpr(X|B))-X_1 \\
&=\lambda A(Z-z)-B(X-\overline{x}).
\end{array}
\end{eqnarray*}
Observe that:
\begin{itemize}
\item[i)]
$\sup(X_1 + X_2|A\vee B)\geq 0$.

Since $X_1 + X_2 = \lambda A(Z-\lpr(Z|A))-B(X-\lpr(X|B))$,
this follows from $2$-coherence of $\lpr$
(equation \eqref{eq:cond_2-coherence}, with $s_1=\lambda$, $s_0=1$).
\item[ii)]
$\sup(X_2|A\vee B)\leq\sup(X_2)\leq 0$.

In fact, $X_2$ is the argument of the supremum in equation \eqref{eq:supAZ}.
\end{itemize}
Using i) and ii), we obtain
\begin{eqnarray*}
\begin{array}{ll}
\sup(X_1|A\vee B)&\geq\sup(X_1 + X_2|A\vee B)-\sup(X_2|A\vee B) \\
&\geq-\sup(X_2|A\vee B)\geq 0.
\end{array}
\end{eqnarray*}
Now we know that
$\sup(X_1|A\vee B)\geq 0$.
On the other hand,
$X_1|A\vee B$ is a three-valued gamble (at most),
and precisely it takes the following values
\begin{align*}
&\lambda(z-\lpr(Z|A))-(\overline{x}-\lpr(X|B)) & &\mbox{ on } A\wedge B, \mbox{ when } A\wedge B\neq\varnothing; \\
&-(\overline{x}-\lpr(X|B)) & &\mbox{ on } \nega{A}\wedge B, \mbox{ when } \nega{A}\wedge B\neq\varnothing; \\
&\lambda(z-\lpr(Z|A))<0 & &\mbox{ on } A\wedge\nega{B}, \mbox{ when } A\wedge\nega{B}\neq\varnothing.
\end{align*}
Therefore,
$-(\overline{x}-\lpr(X|B))\geq\sup(X_1|A\vee B)\geq0,$\footnote{
The first inequality can be strict if $\nega{A}\wedge B=\varnothing$.
Note that $\nega{A}\wedge B=A\wedge B=\varnothing$ cannot occur,
since it implies $B=\varnothing$.
}
i.e. $\overline{x}\leq\lpr(X|B)$.
\end{itemize}
Thus \eqref{eq:Xgx} holds.
It remains to observe that $\forall x<\lpr(X|B)$,
it is $B(X-x)\in\asetp$,
by definition of $\asetp$ and since $0\in\xsetg$.
This means that $x\in S$.
Consequently
\begin{eqnarray*}
\lpr(X|B)=\sup S=\sup\{x\in\rset:B(X-x)\in\asetp)\}.
\end{eqnarray*}
\end{proof}
Proposition~\ref{pro:set_from_lower} states the existence
of a set of desirable gambles $\asetp$,
in accordance with a given $2$-coherent conditional lower prevision $\lpr$
and satisfying the rationality criteria a'), b'), c').
Comparing a'), b') with a), b) in Proposition \ref{pro:lower_from_set},
a clear similarity comes evident:
essentially, the sets $\rbg$, $\rbl$, $B\in\bset$, have been replaced with $\xsetg$,
$\xsetl$ respectively.
As a consequence, note that $0\in\asetp$.

The interpretation of c') is similar to c) in Proposition~\ref{pro:lower_from_set}.
It tells that:
if $X_1, X_2\in\asetp$, $X_1+X_2\neq 0$,
then $\sup(X_1 + X_2)>0$.
Again,
coherence would allow the stronger implication
$X_1, X_2\in\asetp\to X_1 + X_2\in\asetp$,
while $2$-coherence only ensures that $X_1+X_2$ does not belong to the (near) rejection set $\xsetl$.

Actually, a'), b') c') prove to be stronger than a), b), c).
This means that any $2$-coherent conditional prevision can be represented
through a set of desirable gambles $\asetp$
satisfying the necessary axioms a'), b'), c'),
but also that, at the same time, $\asetp$ satisfies the weaker axioms
a), b), c) in Proposition \ref{pro:lower_from_set}.

\subsection{Desirability axioms for $2$-convex previsions}
\label{sec:desirability_2_convex}
A comparison between \eqref{eq:cond_2-convexity} in Definition \ref{def:2-convexity} and \eqref{eq:cond_2-coherence} in Definition \ref{def:cond_2-coherence}
intuitively suggests that we can get an answer to Q1) for $2$-convexity
from a reduced form of Proposition \ref{pro:lower_from_set}, with $\lambda=1$.
More precisely, the following proposition holds:
\begin{proposition}
\label{pro:lower_from_setc}
Let $\aset\subseteq\xset$ be such that
\begin{itemize}
\item[a)] $\aset+\rbg\subseteq\aset$, $\forall B\in\bset$;
\item[b)] $\rbl\cap\aset=\emptyset$, $\forall B\in\bset$.
\end{itemize}
Define, $\forall X|B\in\dlin$,
\begin{eqnarray}
\label{eq:lower_from_setc}
\lpr(X|B)=&\sup\{x\in\rset:B(X-x)\in\aset\}.
\end{eqnarray}
Then,
\begin{itemize}
\item[1)] $\lpr$ is $2$-convex on $\dlin$;
\item[2)] $\lpr$ is centered iff $\rbg\subseteq\aset\ \forall B\in\bset$.
\end{itemize}
\end{proposition}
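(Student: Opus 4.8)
The plan is to verify Definition~\ref{def:2-convexity} directly for the map $\lpr$ of \eqref{eq:lower_from_setc}; this amounts to the reduced form of the proof of Proposition~\ref{pro:lower_from_set} obtained by fixing the stakes (equivalently, by using only the $\lambda=1$ instance of~a) and never invoking~c)). First I would record the one fact used throughout: by~a), each set $\{x\in\rset:B(X-x)\in\aset\}$ is downward closed, since $B(X-x)\in\aset$ and $x'<x$ give $B(X-x')=B(X-x)+(x-x')B$ with $(x-x')B\in\rbg$. From this, b) yields $\lpr(X|B)\le\sup(X|B)$ (for $x>\sup(X|B)$ one has $B(X-x)\in\rbl$), so $\lpr$ is real-valued as the statement presupposes; and, for every $\eta>0$, $B_1(X_1-\lpr(X_1|B_1))+\eta B_1\in\aset$, whereas $B_0(X_0-(\lpr(X_0|B_0)+\eta'))\notin\aset$ for every $\eta'>0$.

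Next I would argue by contradiction. Fix $X_0|B_0,X_1|B_1\in\dlin$ and suppose $\sup(\LGDC|B_0\vee B_1)=-c<0$, where $\LGDC=B_1(X_1-\lpr(X_1|B_1))-B_0(X_0-\lpr(X_0|B_0))$. Pick $\eta,\eta'>0$ with $\eta+\eta'<c$, set $S_1=B_1(X_1-\lpr(X_1|B_1))+\eta B_1\in\aset$ and $V=-\LGDC-\eta B_1-\eta' B_0$. Then $V$ vanishes off $B_0\vee B_1$ (each summand does), while on $B_0\vee B_1$ one has $-\LGDC\ge c$ and $-\eta B_1-\eta' B_0\ge-(\eta+\eta')$, so $V\ge c-\eta-\eta'>0$ there; hence $V\in\mathcal{R}(B_0\vee B_1)^{\succ}$. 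Applying~a) with conditioning event $B_0\vee B_1$ (exactly as in the proof of Proposition~\ref{pro:lower_from_set}) gives $S_1+V\in\aset$; but a one-line computation reduces $S_1+V$ to $B_0(X_0-(\lpr(X_0|B_0)+\eta'))$, contradicting the preceding paragraph. Hence $\sup(\LGDC|B_0\vee B_1)\ge0$ always, i.e. $\lpr$ is $2$-convex on $\dlin$.

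For part~2) I would compute $\lpr(0|B)=\sup\{x\in\rset:-xB\in\aset\}$. If $x>0$ then $-xB\in\rbl$, so $-xB\notin\aset$ by~b); hence the supremum is always $\le0$. If $\rbg\subseteq\aset$ for all $B$, then for $x<0$ one has $-xB=|x|B\in\rbg\subseteq\aset$, so the supremum is $0$; since $0\in\xset$ this gives $\lpr(0|B)=0$ for all $B\in\bsetp$, i.e. $\lpr$ is centered. For the converse, assume $\lpr$ centered and fix $B\in\bsetp$; from $\sup\{x:-xB\in\aset\}=\lpr(0|B)=0$ together with downward closedness, $\alpha B\in\aset$ for all $\alpha>0$. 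Then, given $Y\in\rbg$ with $m=\inf(Y|B)>0$, I would write $Y=\tfrac m2 B+\bigl(Y-\tfrac m2 B\bigr)$: the first term is in $\aset$, and $Y-\tfrac m2 B\in\rbg$ (it vanishes off $B$ and has infimum $\tfrac m2>0$ on $B$), so~a) yields $Y\in\aset$; thus $\rbg\subseteq\aset$.

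The hard part is the middle step of part~1): disposing of the selling gamble $-B_0(X_0-\lpr(X_0|B_0))$ without axiom~c). The point is that, the stakes being fixed, one never has to combine two desirable gambles --- this would happen only if an extra buying term were permitted, and that is exactly where~c) enters in Proposition~\ref{pro:lower_from_set} --- so it suffices to absorb into the single desirable gamble $S_1$, via~a), a non-negative gamble $V$ built from $-\LGDC$ and two arbitrarily small corrections, and then to read off a contradiction with the supremum that defines $\lpr(X_0|B_0)$. In part~2) the only delicate point is the converse implication, where one cannot simply invoke $B\in\aset$ but must split a generic $Y\in\rbg$ into a positive multiple of $B$ plus a remainder that is itself in $\rbg$.
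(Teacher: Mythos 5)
Your proposal is correct and takes essentially the same route as the paper: part 1) is exactly the reduced ($\lambda=1$, no axiom c)) absorption argument of Proposition~\ref{pro:lower_from_set} that the paper invokes — absorb a strictly positive remainder into the single desirable gamble $S_1$ via a) and contradict the supremum defining $\lpr(X_0|B_0)$ — merely organised as a direct contradiction with small corrections $\eta,\eta'$ instead of the paper's $\epsilon/K$-corrected gambles $S_0,S_1$ and the intermediate bound $\sup(T|B_0\vee B_1)\geq 0$. Part 2) matches the paper's proof step for step: the same computation of $\lpr(0|B)$ via b) and $\rbg\subseteq\aset$ in one direction, and in the converse the same two-stage argument (first $\alpha B\in\aset$ for $\alpha>0$ from the supremum and a), then decomposing a generic element of $\rbg$ as an element of $\aset$ plus an element of $\rbg$).
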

\begin{proof}
\emph{Proof of 1)}
The proof is a simplification of that of Proposition \ref{pro:lower_from_set}.
Analogously,
it is checked that condition \eqref{eq:cond_2-convexity} in
Definition \ref{def:2-convexity} is satisfied for $\LGDC$,
where $\lpr$ is defined by \eqref{eq:lower_from_setc}.
The same steps are followed:
first, the definitions of $K$, $S_0$, $S_1$ in \eqref{eq:k}, \eqref{eq:s} simplify to
\begin{eqnarray*}
\label{eq:kc}
\begin{array}{lll}
K=\sup(B_0 + B_1|B_0 \vee B_1), \\
S_0 = B_0 (X_0-\lpr(X_0|B_0))-\frac{\epsilon}{K} B_0, \\
S_1 = B_1 (X_1-\lpr(X_1|B_1))+\frac{\epsilon}{K} B_1.
\end{array}
\end{eqnarray*}
Then,
the following are proven in the same way:
\begin{center}
i) $S_1\in\aset$;\quad\quad\quad ii) $S_0\notin\aset$.
\end{center}
Equation \eqref{eq:lgd} reduces here to
\begin{eqnarray*}
\label{eq:lgdc}
\LGDC=S_1-S_0-\frac{\epsilon}{K}(B_0 + B_1)
\end{eqnarray*}
and defining $T=S_1 - S_0$,
it is $\sup(T|B_0\vee B_1)\geq 0$
(see the case $(s_0<0, s_1>0)$ in the proof of Proposition \ref{pro:lower_from_set}).
This fact is exploited to show that $\sup(\LGDC|B_0 \vee B_1)\geq 0$,
with the same computations of the final part in the proof of Proposition \ref{pro:lower_from_set}.

\emph{Proof of 2)}
Suppose $\rbg\subseteq\aset$.
We prove that then $\lpr$ is centered.
In fact, by \eqref{eq:lower_from_setc}
\begin{eqnarray*}
\label{eq:lprzero}
\lpr(0|B)=\sup\{x:-Bx\in\aset\}, \forall 0|B\in\dlin.
\end{eqnarray*}

For $x<0$,
$\inf(-Bx|B)=-x>0$,
so that $-Bx\in\rbg\subseteq\aset$.

For $x>0$,
$\sup(-Bx|B)=-x<0$,
which implies $-Bx\notin\aset$ by property b).

Therefore $\sup\{x:-Bx\in\aset\}=0$,
i.e. $\lpr(0|B)=0$.

Conversely, suppose now
\begin{eqnarray*}
\label{eq:lprzerobis}
\lpr(0|B)=\sup\{x:-Bx\in\aset\}=0, \forall 0|B\in\dlin.
\end{eqnarray*}
We prove that $\rbg\subseteq\aset$ in two steps.
\begin{itemize}
\item[i)]
$-Bx\in\aset, \forall x<0$.

To see this,
take $\overline{x}<0$.
By definition of supremum,
$\exists\widetilde{x}:\overline{x}<\widetilde{x}\leq 0$,
$-B\widetilde{x}\in\aset$.
Writing $-B\overline{x}=-B\widetilde{x}+B(\widetilde{x}-\overline{x})$,
it is $B(\widetilde{x}-\overline{x})\in\rbg$,
because $\widetilde{x}-\overline{x}>0$.
By property a),
$-Bx\in\aset+\rbg\subseteq\aset$,
that is $-Bx\in\aset$.
\item[ii)]
$\rbg\subseteq\aset, \forall B\in\bsetp$.

For the proof,
let $X\in\rbg$.
This implies $\inf(X|B)>0$,
so that $\delta$ can be chosen,
such that $0<\delta<\inf(X|B)$.
Then
\begin{eqnarray}
\label{eq:bx}
X=BX=B(X-\inf(X|B)+\delta)-B(\delta-\inf(X|B)).
\end{eqnarray}
Since $\delta-\inf(X|B)<0$,
it is $-B(\delta-\inf(X|B))\in\aset$, by i).

Since $\inf(B(X-\inf(X|B)+\delta)|B)=\inf(X|B)-\inf(X|B)+\delta>0$,
it holds that $B(X-\inf(X|B)+\delta)\in\rbg$.

Applying axiom a) to the decomposition \eqref{eq:bx},
it ensues that $X\in\aset$,
that is $\rbg\subseteq\aset$.
\end{itemize}
\end{proof}
An analogously reduced form of Proposition \ref{pro:set_from_lower}
allows us to answer question Q2) for $2$-convexity.
\begin{proposition}
\label{pro:set_from_lowerc}
Let $\lpr:\dlin\rightarrow\rset$ be $2$-convex.
Define
\begin{eqnarray*}
\label{eq:set_from_lowerc}
\begin{array}{lll}
\asetp=\{B(X-x)+Y:X|B\in\dlin, x<\lpr(X|B), Y\in\xsetg\}.
\end{array}
\end{eqnarray*}
\begin{itemize}
\item[1)] The set $\asetp$ is such that:
\begin{itemize}
\item[a)] $\asetp+\xsetg\subseteq\asetp$;
\item[b)] $\asetp\cap\xsetl=\emptyset$ iff $\lpr$ is $1$-AUL;
\item[c)] $\lpr(X|B)=\sup\{x\in\rset:B(X-x)\in\asetp\}$, $\forall X|B\in\dlin$.
\end{itemize}
\item[2)] If $\lpr$ is centered, then $\rbg\subseteq\asetp\ \forall B\in\bset$;
if $\lpr$ is $1$-AUL and $\rbg\subseteq\asetp\ \forall B\in\bset$, then $\lpr$ is centered.
\end{itemize}
\end{proposition}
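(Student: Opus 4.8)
The plan is to obtain Proposition~\ref{pro:set_from_lowerc} as the ``$\lambda=1$'' reduction of Proposition~\ref{pro:set_from_lower}, together with the centering arguments already used in Proposition~\ref{pro:lower_from_setc}. Part~1a) is immediate: if $Z=B(X-x)+Y\in\asetp$ (so $x<\lpr(X|B)$, $Y\in\xsetg$) and $W\in\xsetg$, then $Z+W=B(X-x)+(Y+W)$ with $Y+W\in\xsetg$ because $\inf(Y+W)\ge\inf Y+\inf W\ge0$, hence $Z+W\in\asetp$. For part~1c), write $S=\{x\in\rset:B(X-x)\in\asetp\}$; the inclusion $\{x:x<\lpr(X|B)\}\subseteq S$ is trivial (take $Y=0\in\xsetg$), giving $\sup S\ge\lpr(X|B)$, and it remains to prove $\lpr(X|B)\ge\bar x$ for every $\bar x\in S$.

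For this last inequality I would reproduce the ``$\lambda>0$'' branch of the proof of d') in Proposition~\ref{pro:set_from_lower}, specialised to coefficient~$1$. Given $\bar x\in S$, write $B(X-\bar x)=A(Z-z)+Y$ with $Z|A\in\dlin$, $z<\lpr(Z|A)$, $Y\in\xsetg$. Put $X_1=A(z-\lpr(Z|A))-B(\bar x-\lpr(X|B))$ and $X_2=A(Z-\lpr(Z|A))-B(X-\lpr(X|B))-X_1$; a short computation gives $X_2=A(Z-z)-B(X-\bar x)=-Y\le0$ and $X_1+X_2=A(Z-\lpr(Z|A))-B(X-\lpr(X|B))$, which is exactly the gain $\LGDC$ of Definition~\ref{def:2-convexity} (with $X_1|B_1=Z|A$, $X_0|B_0=X|B$). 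Hence $\sup(X_1+X_2|A\vee B)\ge0$ by $2$-convexity and $\sup(X_2|A\vee B)\le0$, so $\sup(X_1|A\vee B)\ge\sup(X_1+X_2|A\vee B)-\sup(X_2|A\vee B)\ge0$. Since $X_1|A\vee B$ is an at-most-three-valued gamble taking $z-\lpr(Z|A)<0$ on $A\wedge\nega{B}$, the value $-(\bar x-\lpr(X|B))$ on $\nega{A}\wedge B$, and $(z-\lpr(Z|A))-(\bar x-\lpr(X|B))\le-(\bar x-\lpr(X|B))$ on $A\wedge B$, the non-negativity of its supremum forces $\lpr(X|B)\ge\bar x$. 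I expect the only delicate point of the whole proof to be the bookkeeping here --- checking the degenerate cases where $A\wedge B$ or $\nega{A}\wedge B$ is empty (noting $B=\varnothing$ cannot occur) and confirming the conclusion, occasionally strict, still holds --- which is handled exactly as in Proposition~\ref{pro:set_from_lower}.

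Part~1b) consists of two implications. If $\lpr$ is $1$-AUL and $Z=B(X-x)+Y\in\asetp$, then, using \eqref{eq:sup_notless_inf_sup} and $\inf(Y|B)\ge\inf Y\ge0$,
\begin{equation*}
\sup Z\ge\sup(Z|B)\ge\sup\{X-x|B\}+\inf\{Y|B\}\ge\sup(X|B)-x\ge\lpr(X|B)-x>0,
\end{equation*}
so $Z\notin\xsetl$ and $\asetp\cap\xsetl=\emptyset$. Conversely, if $\lpr$ is not $1$-AUL, pick $X|B$ and $x$ with $\sup(X|B)<x<\lpr(X|B)$; then $B(X-x)=B(X-x)+0\in\asetp$ while $\sup(B(X-x))\le0$ (it equals $0$ when $\nega{B}\neq\varnothing$ and $\sup(X|B)-x<0$ when $B=\Omega$), so $B(X-x)\in\asetp\cap\xsetl$, a contradiction; this proves that $\asetp\cap\xsetl=\emptyset$ implies $\lpr$ is $1$-AUL.

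Finally, for part~2: if $\lpr$ is centered it is a centered $2$-convex lower prevision, hence internal by Proposition~\ref{pro:C-convex_properties}a), so every $W\in\rbg$ satisfies $\lpr(W|B)\ge\inf(W|B)>0$ and therefore $W=B(W-0)+0\in\asetp$, giving $\rbg\subseteq\asetp$ for all $B\in\bset$. Conversely, suppose $\lpr$ is $1$-AUL and $\rbg\subseteq\asetp$ for all $B\in\bset$. Then $1$-AUL gives $\lpr(0|B)\le\sup(0|B)=0$ for every $0|B\in\dlin$; on the other hand $-Bx\in\rbg\subseteq\asetp$ for every $x<0$ (since $\inf(-Bx|B)=-x>0$), so part~1c) applied to $0|B$ yields $\lpr(0|B)=\sup\{x:-Bx\in\asetp\}\ge0$. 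Hence $\lpr(0|B)=0$ for all $0|B\in\dlin$, i.e.\ $\lpr$ is centered.
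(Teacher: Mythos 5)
Your proof is correct and, for part 1), follows essentially the same route as the paper: parts a), b) (direct implication) and c) are exactly the $\lambda=1$ specialisation of the proof of Proposition \ref{pro:set_from_lower}, with $2$-convexity (resp. $1$-AUL) replacing $2$-coherence at the relevant step, and your converse in b) uses a witness $B(X-x)$ with $\sup(X|B)<x<\lpr(X|B)$ in place of the paper's $B(X-\sup(X|B))$ --- an immaterial variant. The only genuine divergence is in part 2), first implication: the paper proves $\rbg\subseteq\asetp$ by first showing $-Bx\in\asetp$ for all $x<0$ and then decomposing $X=B(X-\inf(X|B)+\delta)-B(\delta-\inf(X|B))$ with property a), whereas you invoke internality of centered $2$-convex previsions (Proposition \ref{pro:C-convex_properties}, a)) to get $\lpr(W|B)\geq\inf(W|B)>0$ and conclude $W=B(W-0)+0\in\asetp$ in one line; this shortcut is legitimate since $\lpr$ is centered $2$-convex on $\dlin$, and it buys brevity at the price of relying on that earlier proposition, while the paper's argument stays self-contained within the desirability axioms. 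Your treatment of the second implication of part 2) (bounding $\lpr(0|B)\leq 0$ directly from $1$-AUL and $\lpr(0|B)\geq 0$ from part 1c)) is an equally valid minor variant of the paper's use of property b).
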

\begin{proof}
\emph{Proof of 1).}
Apart from the converse implication in b),
the proof is a simplified version of the proof of Proposition \ref{pro:set_from_lower}.
Precisely,
\begin{itemize}
\item[$\bullet$]
\emph{Proof of a).}
See proof of a') in Proposition \ref{pro:set_from_lower},
with $\lambda=1$.
\item[$\bullet$]
\emph{Proof of b).}
If $\lpr$ is $1$-AUL, then $\asetp\cap\xsetl=\emptyset$  follows from the proof of b') in Proposition \ref{pro:set_from_lower},
taking $\lambda=1$;
when proving that $\sup(Z|B)>0$,
the step resorting to $2$-coherence uses now $1$-AUL to justify by \eqref{eq:1_AUL} that
$\sup(B(X-\lpr(X|B))|B)\geq 0$.

We prove now the converse implication,
that if $\asetp\cap\xsetl=\emptyset$ then $\lpr$ is $1$-AUL.
Suppose $\asetp\cap\xsetl=\emptyset$ while $\lpr$ is not $1$-AUL,
which means that there exists $X|B$ such that $\sup(X|B)<\lpr(X|B)$.
Then $Z=B(X-\sup(X|B))\in\asetp$,
because $0\in\xsetg$.
Since $\sup Z=\ \max(\sup(Z|\nega{B}),\ \sup(Z|B))=\ \max(0, \sup(X|B)-\sup(X|B))=0$,
it is also $Z\in\xsetl$ and therefore $Z\in\asetp\cap\xsetl$,
contradicting the assumption $\asetp\cap\xsetl=\emptyset$.
\item[$\bullet$]
\emph{Proof of c).}
Special case of the proof of d') in Proposition \ref{pro:set_from_lower}
(put $\lambda=1$ and derive i) from $2$-convexity rather than $2$-coherence).
\end{itemize}
\emph{Proof of 2).}
From c),
we may write
\begin{eqnarray}
\label{eq;lpzero}
\lpr(0|B)=\sup\{x\in\rset:-Bx\in\asetp\}, \forall 0|B\in\dlin.
\end{eqnarray}
We prove that \emph{if $\lpr$ is centered then $\rbg\subseteq\asetp$}.

Suppose then $\lpr$ centered, which means by \eqref{eq;lpzero}
\begin{eqnarray*}
\lpr(0|B)=\sup\{x:-Bx\in\asetp\}=0, \forall B\in\bsetp.
\end{eqnarray*}
Let us first prove that
\begin{eqnarray}
\label{eq:minusBxinAprime}
-Bx\in\asetp, \forall x<0.
\end{eqnarray}
In fact,
let $\overline{x}<0$.
By the definition of supremum,
$\exists \widetilde{x}: \overline{x}<\widetilde{x}\leq 0$
and $-B\widetilde{x}\in\asetp$.
Hence
$-B\overline{x}=-B\widetilde{x}+B(\widetilde{x}-\overline{x})\in\asetp$ by property a),
given that $B(\widetilde{x}-\overline{x})\in\xsetg$,
since $\inf (B(\widetilde{x}-\overline{x}))=\min(0,\widetilde{x}-\overline{x})=0$.

Now let $X\in\rbg$, $\delta: 0<\delta<\inf(X|B)$.
Writing $X=B(X-\inf(X|B)+\delta)-B(\delta-\inf(X|B))$,
it holds that $-B(\delta-\inf(X|B))\in\asetp$,
using \eqref{eq:minusBxinAprime},
and that $B(X-\inf(X|B)+\delta)\in\xsetg$
because $\inf(B(X-\inf(X|B)+\delta))=\min(0,\inf(X|B)-\inf(X|B)+\delta)=0$.
Therefore $X\in\asetp$,
by property a).
Since a generic $X\in\rbg$ has been considered,
we have shown that $\rbg\subseteq\asetp$.

Conversely,
let us prove now that
\emph{if $\lpr$ is $1$-AUL and $\rbg\subseteq\asetp$, then $\lpr$ is centered}.

For this, we show that the supremum in equation \eqref{eq;lpzero} is zero,
which is equivalent to $\lpr(0|B)=0, \forall 0|B\in\dlin$.

Suppose $\rbg\subseteq\asetp$,
take $x\in\rset$,
and consider the gamble $-Bx\in\rb$.
If $x<0$ then $\inf(-Bx|B)=-x>0$,
so that $-Bx\in\rbg\subseteq\asetp$.
This implies that the supremum in equation \eqref{eq;lpzero} is at least zero.

However,
if $x>0$,
it is $\sup(-Bx)\leq 0$,
hence $-Bx\in\xsetl$.
By property b),
it follows that $-Bx\notin\asetp$ for any positive $x$ and,
therefore,
the supremum in equation \eqref{eq;lpzero} is precisely zero.
\end{proof}
Comparing Propositions \ref{pro:lower_from_set} and \ref{pro:set_from_lower} with,
respectively, Propositions \ref{pro:lower_from_setc} and \ref{pro:set_from_lowerc},
we note that, in addition to the constraint $\lambda=1$,
$2$-convexity requires no condition like c) and c')
in Propositions \ref{pro:lower_from_set} and \ref{pro:set_from_lower} respectively.
Referring, for instance, to c'), this means that,
given $X, Y\in\asetp$ with $X+Y\neq 0$,
$2$-convexity does not
guarantee $\sup(X+Y)>0$:
summing up two individually desirable gambles could therefore give rise to a partial or even to a sure loss.
Moreover, a non-centered $2$-convex $\lpr$ suffers from a more serious shortcoming:
either it is not even $1$-AUL, or
$\rbg\subseteq\asetp$ does not necessarily hold,
meaning that a non-negative gamble $X=BX$ ($X\neq 0$) exists
that is considered non-desirable.
The main drawbacks of $2$-convexity relative to $2$-coherence are therefore clearly pointed out also by a comparison through desirability axioms.

\section{Conclusions}
\label{sec:conclusions} $N$-convex and $n$-coherent conditional
lower previsions broaden the spectrum of uncertainty measures that
can be accommodated into a behavioural approach to imprecision,
including, for instance, conditional extensions of capacities and
niveloids when $n=2$. This choice for $n$ is the most neatly
distinguished from coherence, the other extreme in the spectrum, and
that retaining more interesting properties.
In particular,
centered $2$-convex and $2$-coherent previsions are \emph{stable},
meaning that they can be extended on any set preserving their consistency properties.
$2$-convex and $2$-coherent previsions also
have a clear meaning in terms of desirability.
We believe that the desirability investigation carried out in this paper,
although still at a foundational level,
is important as it displays first results on how this approach
works outside coherence and in the general conditional framework.
Further work is
necessary to investigate additional properties, like the possible
existence of envelope theorems, or properties of already defined
notions. In particular, we conjecture that the $2$-convex  or $2$-coherent natural
extensions may simplify computing the convex or the coherent natural extensions.
As a further generalisation of this work,
the consistency notions defined here could be
extended to the case of unbounded conditional random variables.
This has been done in \cite{tro14} for coherent conditional lower previsions,
while, to the best of our knowledge, a similar investigation for convex conditional previsions
is still missing.

\section*{Acknowledgements}
We wish to thank the referees for their helpful comments.

\bibliographystyle{plain}

\end{document}